\newcommand{\bbB}{\mathbb{B}}
\newcommand{\bbR}{\mathbb{R}}
\newcommand{\bbW}{\mathbb{W}}
\newcommand{\bbV}{\mathbb{V}}
\newcommand{\calB}{\mathcal{B}}
\newcommand{\calD}{\mathcal{D}}
\newcommand{\calJ}{\mathcal{J}}
\newcommand{\calU}{\mathcal{U}}
\newcommand{\aev}{\env{a.e. }}
\newcommand{\bdy}{\partial}
\newcommand{\cone}[1]{{\mathcal C\del{#1}}}
\newcommand{\CZinf}[1]{C_0^\infty\del{#1}}
\DeclareMathOperator{\divgText}{div}
\newcommand{\divg}{\divgText}
\newcommand{\difdir}[1]{\langle #1 \rangle}
\newcommand{\env}[1]{{\textnormal{#1}}}
\renewcommand{\eval}[1]{{#1}\bigg{|}}
\renewcommand{\grad}{\nabla}
\newcommand{\Lp}[1]{L^{p}\del{#1}}
\newcommand{\Lq}[1]{L^{q}\del{#1}}
\newcommand{\Ltwo}[1]{{L^2\del{#1}}}
\newcommand{\Linf}[1]{{L^\infty\del{#1}}}
\newcommand{\Lone}[1]{{L^1\del{#1}}}
\newcommand{\lapl}{\Delta}
\newcommand{\lagr}{\mathcal{L}}
\newcommand{\littleo}[1]{\mathop{o}\del{{#1}}}
\newcommand{\nd}{{\partial_{\nu}}}
\newcommand{\normLt}[2]{\norm{#1}_\Ltwo{#2}}
\newcommand{\normLi}[2]{\norm{#1}_\Linf{#2}}
\newcommand{\normS}[4]{\norm{#1}_\sob{#2}{#3}{#4}}
\newcommand{\normSD}[4]{\norm{#1}_{\sob{#2}{#3}{#4}^*}}
\newcommand{\normSZ}[4]{\abs{#1}_\sob{#2}{#3}{#4}}
\newcommand{\normSZD}[4]{\norm{#1}_{\sobZ{#2}{#3}{#4}^*}}
\newcommand{\of}[1]{\del{#1}}
\newcommand{\pair}[1]{\left\langle #1 \right\rangle}
\newcommand{\pairLt}[2]{{\mathop{\pair{#1}}}_{\Ltwo{#2}\times\Ltwo{#2}}}
\newcommand{\pairLpq}[2]{{\mathop{\pair{#1}}}_{\Lp{#2}\times\Lq{#2}}}
\newcommand{\pairSob}[5]{{\mathop{\pair{#1}}}_{\sob{-#3}{#2}{#5}\times\sobZ{#3}{#4}{#5}}}
\renewcommand{\restriction}{\vert}
\newcommand{\Res}{\mathfrak{R}}
\newcommand{\seq}[1]{\cbr{#1}}
\newcommand{\setdef}[2]{\cbr{#1\,:\, #2}}
 \newcommand{\sgn}{\operatorname{sgn}}
\newcommand{\sob}[3]{{W^{#1}_{#2}\del{#3}}}
\newcommand{\sobZ}[3]{{\mathring{W}^{#1}_{#2}\del{#3}}}
\newcommand{\totalD}{\operatorname{d}\!}
\newcommand{\weakto}{\rightharpoonup}
\newtheorem{thm}[theorem]{Theorem}
\newtheorem{lem}[theorem]{Lemma}
\newtheorem{prop}[theorem]{Proposition}
\newtheorem{cor}[theorem]{Corollary}
\newtheorem{defn}[theorem]{Definition}
\newcommand{\I}{\text{\rm I}}
\newcommand{\DO}{\calD_\Omega}
\newcommand{\BO}{\calB_\Omega}
\newcommand{\BG}{\calB_\Gamma}
\newcommand{\costF}{\mathcal{J}\del{\gamma, y, u}}
\newcommand{\optimal}{\del{\gop,\yop,\uop,\rop,\sop}}
\newcommand{\Omegag}{{\Omega_\gamma}}
\renewcommand{\restriction}{\vert}
\newcommand{\yop}{\bar{y}}
\newcommand{\uop}{\bar{u}}
\newcommand{\gop}{\bar{\gamma}}
\newcommand{\ghat}{\widehat \gamma}
\newcommand{\gtl}{\widetilde{\gamma}}
\newcommand{\yhat}{\widehat y}
\newcommand{\ytl}{\widetilde{y}}
\newcommand{\rop}{\bar{r}}
\newcommand{\rtl}{\widetilde{r}}
\newcommand{\sop}{\bar{s}}
\newcommand{\stl}{\widetilde{s}}
\newcommand{\Uad}{{\mathcal{U}_{ad}}}
\title{Optimal Control of a  Free Boundary Problem: Analysis with Second Order Sufficient Conditions
}
\author{Harbir Antil\thanks{Department of Mathematical Sciences. George Mason University, Fairfax, VA 22030, USA ({\tt hantil@gmu.edu})}. 
\and Ricardo H. Nochetto\thanks{Department of Mathematics and Institute for Physical Science and Technology, University of Maryland
College Park, MD 20742, USA ({\tt rhn@math.umd.edu})}. 
\and Patrick Sodr{\'e}\thanks{Department of Mathematics, 
University of Maryland College Park, MD 20742, USA ({\tt sodre@math.umd.edu})}.}
\begin{document}
\maketitle
\begin{abstract}
We consider a PDE-constrained optimization problem governed 
by a free boundary problem. The state system is based on coupling the Laplace equation in the  bulk with a
Young-Laplace equation on the free boundary to account for surface tension, as proposed by P.\ Saavedra and L.\
R.\ Scott \cite{PSaavedra_RScott_1991}. This amounts to solving
a second order system both in the bulk and on the interface. Our analysis hinges on a convex 
control constraint such that the state constraints are always satisfied. Using only
first order regularity we show that the control to state operator is twice continuously Fr\'echet
differentiable. We improve slightly the regularity of the state variables and exploit it to show existence of a control together with second order sufficient optimality conditions. 
\end{abstract}
\begin{keywords} sharp interface model, free boundary, curvature, 
                                surface tension, pde constrained optimization, boundary control, 
                                lagrangian, control-to-state map, existence of control.
\end{keywords}
\begin{AMS}49J20, 35Q93, 35Q35, 35R35. \end{AMS}

\section{Introduction} \label{s:intro}
Free boundary problems (FBPs) are challenging due to their highly nonlinear nature. Besides the 
state variables, the domain is also an unknown. FBPs find a wide range of applications from phase
separation (Stefan problem, Cahn-Hilliard), shape optimization (minimal surface area), optimal 
control problems with state constraints, fluid dynamics (flow in porous media), crystal growth, 
biomembranes, electrowetting on dielectric, to finance. For many of these problems  there is a close 
interplay between the surface tension and the curvature of
the interface \cite{SWalker_BShapiro_RNochetto_2009a, SWalker_ABonito_RNochetto_2010a}. 
\begin{figure}[H]
\centering
\includegraphics[width=0.3\textwidth,height=0.3\textwidth]{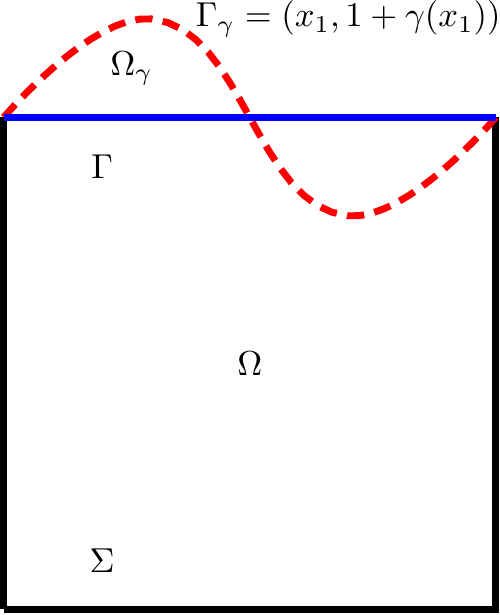}
\caption{$\Omega_{\gamma}$ denotes a physical domain with boundary 
               $\partial \Omega_{\gamma} = \Sigma \cup \Gamma_{\gamma}$. Here $\Sigma$ includes the lateral and the 
               bottom boundary and is assumed to be fixed. Furthermore, the top boundary $\Gamma_{\gamma}$ (dotted line) 
               is ``free" and is assumed to be a graph of the form $(x_1,1+\gamma(x_1))$, where $\gamma \in \sobZ{1}{\infty}{0,1}$ 
               denotes a parametrization. $\Gamma_{\gamma}$ is further mapped to a fixed boundary $\Gamma = (0,1) \times \{1\}$ and in 
               turn the physical domain $\Omega_{\gamma}$ is mapped to a reference domain $\Omega = (0,1)^2$, where all 
               computations are carried out.} 
\label{f:domain}
\end{figure}
\noindent
Of particular interest to us is the control of a model FBP previously studied by 
P. Saavedra and L. R. Scott in \cite{PSaavedra_RScott_1991} and formulated in graph form; see Figure~\ref{f:domain} where the free boundary $\Gamma_\gamma$ is the dotted line.  
The state equations \eqref{eq:state_eqn} involve 
a Laplace equation in the  bulk and a Young-Laplace equation on the free boundary to account for surface tension. 
This amounts to solving a second-order system both in the bulk and on the interface. Below we give a detailed 
description of the problem. 

Let $\sobZ{1}{\infty}{\I}$ be the Sobolev space of Lipschitz
continuous functions on the unit interval $\I=(0,1)$ which vanish at $0$ and $1$. 
Let $\gamma \in \sobZ{1}{\infty}{\I}$ denote a parametrization of the top boundary (see \figref{f:domain}) of 
the physical domain 
$\Omega_{\gamma} \subset \Omega^* \subset \mathbb{R}^2$ with boundary 
$\partial \Omega_{\gamma} := \Gamma_{\gamma} \cup \Sigma$, defined as 
\begin{subequations}
	\begin{align*}
 		\Omega^* &= (0,1) \times (0,2), \\
 		\Omegag &= \setdef{(x_1,x_2)}{x_1 \in \I, \;0< x_2 < 1+ \gamma(x_1)}, \\
 		\Gamma_{\gamma} &= \setdef{(x_1,x_2)}{x_1\in\I,\; x_2=1+\gamma(x_1)}, \\ 
 		\Sigma &= \bdy\Omega_{\gamma} \setminus \Gamma_{\gamma}, \\
		 \Gamma &= \setdef{(x_1,x_2)}{ x_1 \in \I,\; x_2=1} ;
	\end{align*}
\end{subequations}
$\Omega^*$ and $\Sigma$ are fixed while $\Omega_{\gamma}$ and $\Gamma_{\gamma}$ deform
according to $\gamma$. 
Hereafter we will identify $\Gamma$ with
 $\I$ as well as Sobolev spaces defined on them. Since $\gamma$ is Lipschitz continuous with constant $1$, according to 
\eqref{eq:state_const_a}, we deduce that $\abs{\gamma} \leq 1/2$; this guarantees that $\Omega_\gamma \subset \Omega^*$.

We want to
find an optimal control $u \in \Uad \subset \Ltwo{\I}$ so that the solution pair $\del{\gamma,y}$ of the FBP
approximates a given boundary $\gamma_d: \I \to \mathbb{R}$ and 
potential $y_d: \Omega^* \to \mathbb{R}$. This amounts to solving the problem: minimize
\begin{subequations}\label{eq:ocfbp}
	\begin{equation}\label{eq:cost_fcn}
	     \costF := \frac{1}{2}\normLt{\gamma - \gamma_d}{\I}^2  
				+ \frac{1}{2}\normLt{y - y_d}{\Omegag}^2 
				+ \frac{\lambda}{2}\normLt{u}{\I}^2,
	\end{equation}
	subject to the state equations 
	\begin{equation}\label{eq:state_eqn}\left\{
		\begin{aligned} 
			-\lapl y &= 0 && \env{in }\Omegag \\ 
	    		y &= v  && \env{on } \bdy\Omegag \\
	    	-\kappa \mathcal H\sbr{\gamma} 
	    	+ \nd y\del{\cdot,1+\gamma} &=  u
	    		&& \env{on } \I \\
	    	\gamma(0) = \gamma(1) &= 0,
		\end{aligned}
	\right.\end{equation}
	the state constraints
	\begin{align}\label{eq:state_const_a}
		&\abs{\totalD_{x_1}\gamma} \leq 1 \quad \aev \I,
	\end{align}
	with $\totalD_{x_1}$ being the total derivative with respect to $x_1$, 
	and the control constraint 
	\begin{align}\label{eq:control_const_a}
		u &\in \Uad
	\end{align}
	dictated by $\Uad$, a closed ball in $\Ltwo{\I}$, to be specified later in Definition~\ref{defn:const_set}.
\end{subequations}
Here $\lambda > 0$ is a stabilization parameter; 
$v \in \sob1p{\Omega^*}$, $p>2$ \cite[Lemma 2]{PSaavedra_RScott_1991}, 
is given which in principle 
could act as a Dirichlet boundary control;
$$
\mathcal H\sbr{\gamma} := \totalD_{x_1}\del{\frac{\totalD_{x_1}\gamma}{\sqrt{1+\abs{\totalD_{x_1}\gamma}^2}}}
$$ 
is the \emph{curvature} of $\gamma$; and $\kappa > 0$ plays the role of surface tension coefficient.

Optimal control of partial differential equations (PDEs) allows us to achieve a specific goal \eqref{eq:cost_fcn} with 
PDE \eqref{eq:state_eqn} and other constraints \eqref{eq:state_const_a}-\eqref{eq:control_const_a} being satisfied 
and can be highly beneficial in practice (see \cite{FTroltzsch_2010a} for more details). 
For example using the reverse electrowetting, i.e. by applying a control to change the shape of fluid droplets,
one can generate enough power to charge a cellphone \cite{TKrupenkin_JATaylor_2011a}.
There has been various attempts to solve optimal control problems with a FBP constraint. 
We refer to \cite{MHinze_SZiegenbalg_2007b, MHinze_SZiegenbalg_2007a} for control
of a two phase Stefan problem in graph formulation and \cite{MKBernauer_RHerzog} for the same problem
in level set formulation. Paper \cite{SRepke_NMarheineke_RPinnau2010a} discusses optimal control
of a FBP with Stokes flow. 
Even though problem \eqref{eq:cost_fcn}-\eqref{eq:control_const_a} is relatively simple, it captures the essential features associated with surface tension effects found in more complex systems, and allows us to develop a complete second-order analysis, 
based on \cite{FTroltzsch_2010a}, which is absent in the existing literature on FBP.

Depending on the role of the free boundary there are several methodologies to formulate a FBP. We choose the sharp interface method written in graph form (see Figure~\ref{f:domain}). The (free) interface $\Gamma_\gamma$ is governed by the explicit nonlinear PDE
	\[
		-\kappa \mathcal H\sbr{\gamma} + \nd y =  u. 
	\]
A similar approach was used in \cite{MHinze_SZiegenbalg_2007b, MHinze_SZiegenbalg_2007a}
for the optimal control of a Stefan problem, but without the full accompanying theory developed herein. Alternative approaches to treat FBPs are the level set method and the diffuse
interface method \cite{KDeckelnick_GDziuk_CElliott_2005a,MKBernauer_RHerzog}.

We use a fixed domain approach to solve the optimal control free boundary problem (OC-FBP). In fact, we transform $\Omega_\gamma$ to $\Omega=(0,1)^2$ and $\Gamma_\gamma$ to $\Gamma=(0,1)\times\set{1}$ (see Figure~\ref{f:domain}), at the expense of having a governing PDE with rough coefficients. This avoids dealing with shape sensitivity analysis \cite{JSokolowski_JPZolesio_1992,
MDelfour_JZolesio2011}. We refer to \cite{KVZee_EBrummelen_IAkkerman_RBorst_2010a} for a comparison 
between these approaches applied to a FBP. Using operator interpolation \cite{LTartar_2007} we
demonstrate how to improve the existing regularity of state variables derived earlier in
\cite{PSaavedra_RScott_1991}, which turns out to be instrumental to derive the second-order sufficient condition. 

One of the challenges of an OC-FBP is dealing with possible
topological changes of the domain by introducing state constraints. Our analysis provides control constraints which always enforce the state constraints, i.e. we can simply
treat OC-FBP as a control constrained problem without any state constraints. We refer to \cite[Section~6.2]{FTroltzsch_2010a} and references therein for state and gradient constraints problems along with the associated difficulties.
We will provide a comprehensive numerical approach for the control problem \eqref{eq:ocfbp} in a forthcoming paper
\cite{HAntil_RHNochetto_PSodre_2013a}.

We have organized this 
paper 
as follows. A detailed problem description on a fixed domain is given in \autoref{s:oc_fbp}.
We introduce the Lagrangian functional to formally derive the first-order necessary optimality conditions in
\autoref{s:oc_lagrangian}. We present a rigorous justification of the Lagrangian results in
the remaining sections. To this end, we introduce a control-to-state operator in \autoref{s:g_map} and show that for a particular set of admissible controls it is twice Fr\'echet
differentiable. Finally, we write the optimal control problem in its reduced form and show the existence of a control under slightly higher regularity together with second-order sufficient conditions in \autoref{s:oc_rc}.

\section{OC-FBP on Reference Domain}
\label{s:oc_fbp}

We start by mapping the physical domain $\Omega_{\gamma}$ onto the fixed reference domain 
$\Omega=\intoo{0,1}^2$. 
This results in an optimal control problem subject to PDE constraints with nonlinear coefficients depending on $\gamma$ but without an explicit interface. 
The idea is to map the unknown domain $\Omegag$ onto the fixed domain
$\Omega$ using the inverse of the Lipschitz map $\Psi :
\Omega \to \Omegag$ defined as
	\begin{align}\label{eq:ref_dom_map}
		\Psi(x_1,x_2) = \del{x_1, \del{1+\gamma\del{x_1}} x_2} \in
		\Omegag, \quad \env{for } \del{x_1,x_2} \in \Omega.
	\end{align}							
Since $\gamma$ is Lipschitz continuous with constant $1$, according to 
\eqref{eq:state_const_a}, we deduce that $\abs{\gamma} \leq 1/2$ and
that $\Psi$ is invertible because its Jacobian is $J_\gamma=1+\gamma$. Furthermore, the inverse of $\Psi$ is also Lipschitz. Moreover, it becomes routine to check that the Laplace equation $\Delta y = 0$ in $\Omegag$ and $\nd y$ on $\Gamma_\gamma$ can be written as
$$
   \divg\of{A\sbr{\gamma}\grad y} = 0 \quad \mbox{in } \Omega, \qquad
    A\sbr{\gamma}\grad y \cdot \nu \del{1+\abs{\totalD_{x_1}\gamma}^2}^{-1/2} \quad \mbox{on } \I,
$$
where $\nu = \sbr{0,1}^{T}$, and $A : \sobZ{1}{\infty}{\I} \rightarrow 
L^\infty(\Omega)^{2\times 2}$ is the Nemytskii operator \cite[Chapter 4]{FTroltzsch_2010a} defined by
\begin{align} \label{eq:op_A}
	A\sbr{\gamma} &
				  = \begin{bmatrix} 1+\gamma(x_1) & -\totalD_{x_1}\gamma(x_1)x_2 \\ 				
				  			  -\totalD_{x_1}\gamma(x_1)x_2 & \frac{1+\del{\totalD_{x_1}\gamma(x_1)x_2}^2}{1+\gamma(x_1)} \end{bmatrix} .
\end{align} 
It is convenient to write $A_{2,2}\sbr{\gamma}$ as $\Phi\sbr{\gamma} := \varphi\del{\gamma(x_1), \totalD_{x_1}\gamma(x_1)x_2}$, where $\varphi\del{a, b} := \del{1 + b^2}/\del{1 + a}.$
 
We simplify the exposition by exploiting the fact that we are only interested in small $\sobZ1\infty{\I}$-perturbations 
of the flat case $\gamma=0$, namely we have
$\totalD_{x_1}\gamma$ small pointwise. We thus make the following assumptions:
\begin{itemize}
 \item[{$\bf (A_1)$}] Linearized curvature: $\mathcal{H}_{lin}\sbr{\gamma} = {\totalD^2_{x_1}\gamma} \del{1+\abs{\totalD_{x_1}\gamma}^2}^{-1/2}$.
 \item[{$\bf (A_2)$}] Scaled control: $u$ becomes $u \del{1+\abs{\totalD_{x_1}\gamma}^2}^{-1/2}$. 
\end{itemize}
These assumptions are not crucial. 
The nonlinear curvature $\mathcal{H}[\gamma]$ formally reads
\[
    \mathcal{H}[\gamma] = \totalD_{x_1}\del{\frac{\totalD_{x_1}\gamma}{\sqrt{1+\abs{\totalD_{x_1}\gamma}^2}}}  
        = \frac{\totalD_{x_1}^2 \gamma}{ \del{ 1+\abs{\totalD_{x_1}\gamma}^2}^{3/2}} ,
\]
which is similar to the linearized curvature $\mathcal{H}_{lin}$ except for the $L^\infty(\I)$ 
factor $1/(1+\abs{\totalD_{x_1} \gamma}^2) \approx 1$. Assumption ${\bf (A_1)}$ simplifies the structure of the bilinear form $\BG$ in \eqref{eq:g_var_pde}. On the other hand, the scaling of the control in ${\bf (A_2)}$ avoids unnecessarily complicating the right-hand-side of \eqref{eq:lin_st_const} below, which would contain $u \del{1+\abs{\totalD_{x_1}\gamma}^2}^{1/2}$ instead of simply $u$. Our analysis below extends to the general setting without assumptions ${\bf (A_1)}$ and ${\bf (A_2)}$.

Under these assumptions and applying the map $\Psi$, the optimal control problem 
\eqref{eq:ocfbp} becomes: minimize
\begin{subequations}
	\begin{equation}\label{eq:lin_cost}
		\costF := \frac{1}{2}\normLt{\gamma - \gamma_d}{\I}^2 
					+ 
                                          \frac{1}{2}\normLt{\del{y +
                                              v - y_d}
                                            \sqrt{J_\gamma}}{\Omega}^2 
					+ \frac{\lambda}{2}\normLt{u}{\I}^2
	\end{equation}
subject to the state equations $\del{\gamma,y}\in\sobZ 1\infty{\I} \times \sobZ 1 p \Omega$
	\begin{equation}\label{eq:lin_st_const} \left\{
		\begin{aligned} 
			-\divg\of{A\sbr{\gamma}\grad\del{y+v}} &= 0 				&& \env{in } \Omega \\ 	        	                       
	        -\kappa \totalD_{x_1}^2\gamma + A\sbr{\gamma}\grad\del{y+v}\cdot\nu &= u 	&& \env{in } \I \\		    					  
		\end{aligned}
	\right.\end{equation}	
	the state constraints
	\begin{align}\label{eq:state_const}
		&\abs{\totalD_{x_1}\gamma} \leq 1 \quad \aev  \I,
	\end{align}
	with $\totalD_{x_1}$ being the total derivative with respect to $x_1$, 
	and the control constraint 
	\begin{align}\label{eq:control_const}
		u &\in \Uad
	\end{align}
	dictated by $\Uad$, a closed ball in $\Ltwo{I}$, to be specified later in Definition~\ref{defn:const_set}.
\end{subequations}

In order to derive the first- and second-order optimality conditions in later sections, we need to compute the first- and 
second-order directional derivatives of $A$, which in turn requires computing the directional derivative of the 
Nemytskii operator $\Phi$ defined above. To simplify notation, we drop the evaluation
of $\gamma$ and $\totalD_{x_1}\gamma$ at $x_1$. The derivative of $\Phi$ in the direction $h$ at 
$(\gamma, x_2 \totalD_{x_1}\gamma)$ is given by
\begin{align*}
     \Dif\Phi\sbr{\gamma}\difdir{h} 
                 =  \partial_a\varphi\del{\gamma, x_2 \totalD_{x_1}\gamma} h + \partial_b\varphi\del{\gamma, x_2 \totalD_{x_1}\gamma}  x_2 \totalD_{x_1}h
\end{align*}
where 
\begin{align*}
	\grad \varphi(a,b) 
	    &:= \begin{bmatrix} \partial_a\varphi(a, b) & \partial_b\varphi(a,b)\end{bmatrix} 
	       = \begin{bmatrix} -\frac{1 + b^2}{(1+ a)^2} &	\frac{2 b}{1+a}
	          \end{bmatrix}    .
\end{align*}
Furthermore, we obtain the following representation for $\Dif A$ in terms of $h$ and $\totalD_{x_1} h$
\begin{align} \label{eq:op_DA}
     \Dif A\sbr{\gamma}\difdir{h} 
       & := A_1\sbr{\gamma}h + A_2\sbr{\gamma} \totalD_{x_1}h  \nonumber \\
       & ~=  \begin{bmatrix} 1 & 0 \\ 0 & \partial_a\varphi\del{\gamma, x_2 \totalD_{x_1}\gamma} \end{bmatrix} h                                         
          + \begin{bmatrix} 0 & -x_2 \\ -x_2 & x_2 \partial_b\varphi\del{\gamma, x_2 \totalD_{x_1}\gamma} \end{bmatrix}  \totalD_{x_1}h , 
\end{align}
whence the remainder $\Res_A\sbr{\gamma, h}$ at $\gamma$ in the direction $h$ reads
\begin{subequations}\label{eq:op_RA}
	\begin{align}
		\Res_A\sbr{\gamma, h} := A\sbr{\gamma+h} - A\sbr{\gamma} - \Dif A\sbr{\gamma}\difdir{h}
	\end{align}
and
	\begin{align}
		\lim_{\normSZ{h}1\infty{\I} \to 0} 
			\frac{\normLi{\Res_A\sbr{\gamma, h}}\Omega}{\normSZ{h}1\infty{\I}} = 0 ;
	\end{align}
	(\ref{eq:op_RA}b) follows directly from the structure of $A$
        \cite[Lemma 4.12]{FTroltzsch_2010a}.  
\end{subequations}
The Hessian of $\varphi$ is
\begin{align*}
    \nabla^2\varphi(a, b)
      &= \begin{bmatrix} 
              \partial_{a}^2\varphi(a, b) & \partial_{ab}\varphi(a,b) \\ \partial_{ba}\varphi(a,b) & 
             \partial_{b}^2\varphi(a, b) 
            \end{bmatrix}
      = 2\begin{bmatrix} 
               \frac{1 + b^2}{(1+a)^3} & \frac{-b}{(1+a)^2} \\ 
               \frac{-b}{(1+a)^2} & \frac{1}{1+a}
            \end{bmatrix}. 
\end{align*}
The second-order derivative of $\Phi$ in the direction $h_1$ followed by $h_2$ 
evaluated at $(\gamma, x_2 \totalD_{x_1}\gamma)$ is
\begin{align*}
     \Dif^2\Phi\sbr{\gamma}\difdir{h_2,h_1} 			
	&= \partial_{a}^2\varphi\del{\gamma, x_2  \totalD_{x_1}\gamma } h_2 h_1 
	               + \partial_{ab}\varphi\del{\gamma, x_2 \totalD_{x_1}\gamma } x_2 h_2 \totalD_{x_1}h_1  \\
	 &\quad + \partial_{ab}\varphi\del{\gamma, x_2 \totalD_{x_1}\gamma } x_2 \totalD_{x_1}h_2 h_1  
         	     + \partial_{b}^2\varphi\del{\gamma, x_2 \totalD_{x_1}\gamma } x_2^2 \totalD_{x_1}h_2 \totalD_{x_1}h_1.
\end{align*}
Finally, we obtain the following representation for $\Dif^2 \!\! A$ in terms of $h_1$ and $h_2$
\begin{align} \label{eq:op_D2A}
     \Dif^2 \!\! A\sbr{\gamma}\difdir{h_2,h_1} = \begin{bmatrix} 0 & 0 \\ 0 & \Dif^2\Phi\sbr{\gamma}\difdir{h_2,h_1}\end{bmatrix},
\end{align}
whence the remainder $\Res_{\Dif A}\sbr{\gamma, h_1, h_2}$ at $\gamma$ reads
	\begin{subequations}\label{eq:op_RDA}
	\begin{equation}
		\Res_{\Dif A}\sbr{\gamma, h_1, h_2} 
			:= \Dif A\sbr{\gamma+h_2}\difdir{h_1} - \Dif A\sbr{\gamma}\difdir{h_1} - \Dif^2 \!\! A\sbr{\gamma}\difdir{h_1, h_2}, 
	\end{equation}
and \cite[Lemma 4.12]{FTroltzsch_2010a}
	\begin{equation}
		\lim_{\substack{\normSZ{h_1}1\infty{\I} \to 0 \\ \normSZ{h_2}1\infty{\I} \to 0 }}
			\frac{\norm{\Res_{\Dif A}\sbr{\gamma, h_1, h_2}}_{L^\infty(\Omega)^{2 \times 2}}}{\normSZ{h_1}1\infty{\I}\normSZ{h_2}1\infty{\I}} = 0. 
	\end{equation}
	\end{subequations}
\begin{prop}[bounds on $A$] \label{prop:op_A}
If the state constraint \eqref{eq:state_const} holds, then there exists a positive constant $C_A < \infty$ such that
\begin{equation}\begin{aligned}  \label{eq:A_unif_bound}
    \norm{A\sbr\gamma}_{L^\infty\del{\Omega}^{2\times 2}}
    	&+ \sup_{\normSZ{h}1\infty{\I}=1}\norm{\Dif A\sbr\gamma\difdir{h}}_{L^\infty\del{\Omega}^{2\times 2}} \\	&+ \sup_{\substack{\normSZ{h_1}1\infty{\I} = 1 \\ \normSZ{h_2}1\infty{\I} = 1}}\norm{\Dif^2 \!\! A\sbr{\gamma}\difdir{h_1,h_2}}_{L^\infty\del{\Omega}^{2\times 2}} \leq C_A.
\end{aligned}\end{equation}    
\end{prop}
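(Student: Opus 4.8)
The plan is to reduce everything to pointwise (in $\Omega$) bounds on the matrix entries: since all norms on $\mathbb{R}^{2\times 2}$ are equivalent, $\norm{\cdot}_{L^\infty(\Omega)^{2\times 2}}$ is controlled, up to a universal constant, by the $L^\infty(\Omega)$ norms of the (at most four) scalar entries. The only quantity that can a priori blow up is the factor $1/(1+\gamma)$ appearing in $\Phi\sbr\gamma$ and in the derivatives of $\varphi$, and the sole role of the state constraint \eqref{eq:state_const} is to keep $1+\gamma$ bounded away from zero. I would isolate that fact first and then treat the three terms in \eqref{eq:A_unif_bound} in turn.

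First I would record the pointwise bounds. From \eqref{eq:state_const} we have $\abs{\totalD_{x_1}\gamma}\le 1$ a.e.\ and, as already noted in the text (using $\gamma(0)=\gamma(1)=0$ and the unit Lipschitz bound), $\abs{\gamma}\le 1/2$; hence $1+\gamma\in[1/2,3/2]$ and $1/(1+\gamma)\in[2/3,2]$ on $\Omega$. Writing $b=x_2\totalD_{x_1}\gamma$ and using $x_2\in(0,1)$ gives $\abs{b}\le 1$, so $1+b^2\le 2$. Substituting these into the four entries of $A\sbr\gamma$ from \eqref{eq:op_A} yields $\abs{1+\gamma}\le 3/2$, $\abs{b}\le 1$, and $\abs{\Phi\sbr\gamma}=\abs{(1+b^2)/(1+\gamma)}\le 4$, which bounds the first term in \eqref{eq:A_unif_bound}.

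For the derivative terms I would combine the same estimates with a Poincar\'e step. The displayed expression for $\grad\varphi$ gives $\abs{\partial_a\varphi}=(1+b^2)/(1+a)^2\le 8$ and $\abs{\partial_b\varphi}=2\abs{b}/(1+a)\le 4$ on $\Omega$, while the displayed Hessian $\nabla^2\varphi$ yields $\abs{\partial_a^2\varphi}\le 32$, $\abs{\partial_{ab}\varphi}\le 8$, $\abs{\partial_b^2\varphi}\le 4$. The directions $h,h_1,h_2$ lie in $\sobZ{1}{\infty}{\I}$, so, vanishing at the endpoints, they obey the elementary bound $\normLi{h}\I\le\normSZ{h}1\infty\I$ (integrate $\totalD_{x_1}h$ from $0$); thus $\normSZ{h}1\infty\I=1$ controls both $\normLi{h}\I$ and $\normLi{\totalD_{x_1}h}\I$ by $1$. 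Feeding these into the explicit representation \eqref{eq:op_DA} bounds each entry of $\Dif A\sbr\gamma\difdir{h}$ — the nontrivial $(2,2)$ entry by $\abs{\partial_a\varphi}\,\normLi{h}\I+\abs{\partial_b\varphi}\,\normLi{\totalD_{x_1}h}\I\le 12$ — and feeding them into \eqref{eq:op_D2A}, via the four-term expression for $\Dif^2\Phi\sbr\gamma\difdir{h_2,h_1}$, bounds the single nonzero entry of $\Dif^2\!\!A\sbr\gamma\difdir{h_1,h_2}$ by $32+8+8+4=52$. Taking $C_A$ a fixed multiple of the largest of these constants proves \eqref{eq:A_unif_bound}.

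There is no genuine analytic obstacle here: the whole estimate is a finite bookkeeping of rational expressions in $(\gamma,b)$ over a compact range. The one point I would not gloss over is that the bound is impossible without \eqref{eq:state_const}: dropping it allows $1+\gamma\to 0$, making $\Phi$, $\grad\varphi$ and $\nabla^2\varphi$ unbounded. Accordingly I would flag explicitly where the lower bound $1+\gamma\ge 1/2$ enters each of the three terms, and keep track of the Poincar\'e conversion from the seminorm $\normSZ{\cdot}1\infty\I$ to $L^\infty$ control of the directions, which is the only place the vanishing boundary values of the perturbations are used.
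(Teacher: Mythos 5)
Your proposal is correct, and it is precisely the routine verification the paper leaves implicit: Proposition~\ref{prop:op_A} is stated without proof, since the bound is regarded as evident from the explicit formulas \eqref{eq:op_A}, \eqref{eq:op_DA} and \eqref{eq:op_D2A}. Your pointwise bookkeeping --- using \eqref{eq:state_const} together with $\gamma(0)=\gamma(1)=0$ to get $1+\gamma\in[1/2,3/2]$, the bound $\abs{x_2\totalD_{x_1}\gamma}\le 1$, and the Poincar\'e-type inequality $\normLi{h}{\I}\le\normSZ{h}1\infty{\I}$ for directions vanishing at the endpoints --- is exactly the intended argument, and your explicit constants check out.
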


\section{Formal Lagrangian Formulation}
\label{s:oc_lagrangian}

In this section we formally derive  
the first-order necessary optimality conditions using the Lagrangian approach described in
\cite{FTroltzsch_2010a}. To this end, we will assume that the admissible control set $\Uad$ 
guarantees the state constraints \eqref{eq:state_const}, a pending issue we revisit and 
examine in detail in section \ref{s:g_map}.
For a rigorous analysis of the existence of Lagrange multipliers in Banach spaces we 
refer to \cite{JZowe_SKurcyusz_1979a}.
 
It is well known that for a convex optimal control problem with linear
constraints, the first-order necessary optimality conditions are also sufficient conditions 
\cite[Lemma 2.21]{FTroltzsch_2010a}. This does not apply to our problem because, despite 
linearizing the curvature via assumption {$(\bf A_1)$}, the state equations \eqref{eq:lin_st_const} are still highly nonlinear and the optimization nonconvex. We will  derive the second-order sufficient optimality conditions in  \autoref{s:oc_rc}.

Let $s, r$ denote the adjoint variables corresponding to states $\gamma, y$ respectively. 
Then the formal Lagrangian functional is given by
\begin{equation}\begin{aligned}\label{eq:lin_lagr}
	\lagr (\gamma,y,u,r,s) 
		:= \costF  &+\int_\Omega \divg\of{A\sbr{\gamma}\grad \del{y+v}}r \dif x \\
		&+ \int_0^1 \del{\kappa \totalD^2_{x_1}\gamma - A\sbr\gamma\grad\del{y+v}\cdot\nu + u}s \dif\sigma ;
\end{aligned}\end{equation}
we implicitly assume regularity in writing \eqref{eq:lin_lagr}. 
Additionally, if $\optimal$ is a critical point for $\lagr$, then the first-order
necessary optimality conditions are
\begin{subequations} \label{eq:first_order_opt}
	\begin{align}
		\pair{\Dif_{v}\lagr{\optimal},h}_{\mathbb{V}^*,\mathbb{V}}   &= 0   
		\qquad \forall h \in \mathbb{V},  \\
		\pairLt{\Dif_{u}\lagr{\optimal},u-\uop}{\I}   &\ge 0 
                \qquad \forall u \in \mathcal{U}_{ad},
	\end{align}
where 
\begin{align}\label{eq:first_order_opt_spaces}
    \del{v,\mathbb{V}} \in \set{ \del{\gamma,\sobZ1\infty{\I}}, \del{y,\sobZ1p\Omega}, \del{r,\sobZ{1}{q}{\Omega}}, \del{s,\sobZ11{\I}} } ,
\end{align}
\end{subequations}
$p,q$ are H\"older conjugate indices, i.e. $1/p+1/q=1$, with $p>2$, and $\pair{\cdot,\cdot}_{\mathbb{V}^*,\mathbb{V}}$ stands for duality pairings. 
Therefore, computing $\optimal$ requires solving the nonlinear system
\eqref{eq:first_order_opt}. We again point out that the calculations in this section are merely formal and the functions in \eqref{eq:first_order_opt_spaces} will be justified later in sections \ref{s:g_map} and \ref{s:oc_rc}. In practice this can be realized using techniques described in 
\cite{HAntil_RHoppe_CLinsenmann_2007a, CTKelley_1999a, FTroltzsch_2010a}.
For variational inequalities of the first kind, such as (\ref{eq:first_order_opt}b), we refer to 
\cite{RGlowinski_1984a} for relaxation and augmented Lagrangian 
techniques and to \cite{JCDReyes_MHintermuller_2011a} for semi-smooth Newton methods.
The remainder of this section is devoted to the derivation of the equations satisfied by $\optimal$ using the nonlinear system above.

Since $\pair{\Dif_{\cbr{s,r}}\lagr{\optimal},h} = 0$ implies that $(\gop,\yop)$ solves the state equations \eqref{eq:lin_st_const}, we focus on the adjoint equations $\pair{\Dif_{\cbr{\gamma,y}}\lagr{\optimal},h}=0$.  Using Green's theorem and assuming smoothness, the formal
Lagrangian $\lagr$ can be rewritten as:
	\begin{equation}\begin{aligned} \label{eq:lin_lagr_uweak}
 	\lagr (y,&\gamma,u,r,s) 
		= \costF + \int_\Omega \divg\of{A\sbr{\gamma}\grad r}\del{y+v} \dif x  \\
			&\quad + \int_{\bdy\Omega} \del{rA\sbr\gamma\grad\del{y+v} - \del{y+v}A\sbr\gamma\grad r}\cdot\nu \dif\sigma \\	
			&\quad + \int_0^1 \del{ \kappa \gamma \totalD^2_{x_1}s - s A\sbr\gamma\grad\del{y+v}\cdot\nu +us} \dif\sigma 
				+ \eval{\kappa \del{\totalD_{x_1}\gamma s - \gamma \totalD_{x_1}s}}_0^1.
	\end{aligned}\end{equation}
Imposing $\pair{\Dif_{y}\lagr{\optimal},h}  = 0$  to \eqref{eq:lin_lagr_uweak} implies that for every $h \in \sobZ 1 p \Omega$
	\begin{subequations} \label{eq:lin_rop_prev}
		\begin{equation}\begin{aligned} 
		     -\int_\Omega \divg\of{A\sbr \gop \grad \rop}h  \dif x
			&=  \int_\Omega \del{\yop +v - y_d} \del{1+\gop} h \dif x  \\
	    	     	&\quad + \int_{\bdy\Omega} \rop A\sbr\gop \grad h \cdot \nu \dif\sigma
	    	    		     - \int_0^1 \sop A\sbr\gop \grad h \cdot \nu \dif\sigma.
	         \end{aligned}\end{equation}	
	    Next, without loss of generality ($\CZinf\Omega$ is dense in $\sobZ{1}{p}{\Omega}$), we obtain
		\begin{align}
	               - \int_\Omega \divg\of{A\sbr \gop \grad \rop} h \dif x 
	               &=  \int_\Omega \del{\bar y+v- y_d} \del{1+\gop} h \dif x 
	                                && \forall h \in \CZinf\Omega,
		\end{align} 
		whereas, using that $A\sbr\gamma \grad h \cdot \nu$ can be chosen arbitrarily on $\bdy\Omega$ we deduce from 
		(\ref{eq:lin_rop_prev}a) and (\ref{eq:lin_rop_prev}b) that
                	\begin{align}
                       \rop - \sop\restriction_\Gamma = 0, \qquad \rop \restriction_\Sigma = 0. 
                	\end{align}    
	\end{subequations} 		
		In view of (\ref{eq:lin_rop_prev}b-c), the strong form of the boundary value problem for $\rop$ is: 		
		seek $\rop \in \sob{1}{q}{\Omega}$	such that
		 \begin{equation}\label{eq:lin_rop}
		 \left\{
			\begin{aligned}
				-\divg\of{A\sbr{\bar\gamma}\grad\rop} &=  \del{\yop + v - y_d}\del{1+\gop}  && \env{in } \Omega \\
					\rop &= \sop && \env{on } \Gamma \\ 
					\rop &= 0 && \env{on } \Sigma.
			\end{aligned}\right.
		\end{equation}
		
Next we employ the same technique to obtain the equations for the second adjoint variable $\sop$: we impose $\pair{\Dif_{\gamma}\lagr{\optimal},h}  = 0$  to \eqref{eq:lin_lagr_uweak} and make use of
the boundary conditions in \eqref{eq:lin_rop} to obtain for every $h \in \sobZ 1 \infty {\I}$
		\begin{align*}
			-\int_0^1\kappa \totalD^2_{x_1}s h\dif\sigma 
				&=  \int_0^1 \del{\gop - \gamma_d}h \dif\sigma \\
				&\quad+  \frac{1}{2} \int_\Omega \abs{\yop + v - y_d}^2 h \dif x 
				-\int_\Omega \Dif A\sbr{\gop}\difdir{h}\grad\del{\yop + v}\cdot \grad\rop \dif x.
		\end{align*} 
Therefore, the strong form of the boundary value problem for $\sop$ is: 
seek $\sop \in \sobZ{1}{1}{{\I}}$ 
		\begin{equation}\label{eq:lin_sop}
		\left\{ 
			\begin{aligned}
				-\kappa \totalD^2_{x_1}\sop 
				    &= \del{\gop - \gamma_d} 
				     +  \frac{1}{2}\int_0^1 \abs{\yop+v-y_d}^2 \dif x_2 
			            - \int_0^1 A_1\sbr{\gop}\grad\del{\yop + v} \cdot \grad \rop\dif x_2 \\
				    &\quad
+ \totalD_{x_1}\del{\int_0^1 A_2\sbr{\gop}\grad\del{\yop+v} \cdot \grad \rop\dif x_2}
						\quad \env{in } \I \\
				\sop(0) &= \sop(1) = 0,
			\end{aligned}\right.
		\end{equation}
where $A_1, A_2$ denote the representation of $DA$ given in \eqref{eq:op_DA}. We note that the integrals on the right 
hand side of \eqref{eq:lin_sop} correspond to integration in $x_2$ (vertical) direction. 
	
Finally, (\ref{eq:first_order_opt}b) implies	
\begin{align} \label{eq:lin_uop_var_ineq}
   \pairLt{\lambda \uop + \sop, u-\uop }{\I} \geq 0, \qquad \forall u \in \mathcal{U}_{ad}.
\end{align}   
To summarize, the solution $\optimal$ to the first-order optimality system \eqref{eq:first_order_opt}
satisfies \eqref{eq:lin_st_const}, \eqref{eq:lin_rop}, \eqref{eq:lin_sop} and \eqref{eq:lin_uop_var_ineq}. 
We stress that the formal approach presented in this section is very systematic and highly
useful even though it is not clear at the moment how to show the existence and (local) uniqueness of the optimal control
$\uop$.  A rigorous analysis will be developed in the next two sections.

\section{The Control-to-state Map $G_v$}
\label{s:g_map}
Let $G_v$ denote the nonlinear map
\begin{align}   \label{eq:cont_to_state}
    \fullfunction{G_v}{\calU} {\bbW^1}
    {u}{\del{\gamma,y}},
\end{align}
where $\bbW^1 := \sobZ{1}{\infty}{\I}\times\sobZ{1}{p}{\Omega}$, $(\gamma,y)$ solves \eqref{eq:lin_st_const}, and the subscript on $G_v$ denotes dependence
on a fixed and non-trivial $v \in \sob{1}{p}{\Omega}$. Furthermore, 
$\calU \subset L^2(\I)$ is an open ball containing the set of admissible
controls $\Uad$, 
\[
    \Uad \subset \calU \subset L^2(\I), 
\]
which will be precisely specified in Definition~\ref{defn:const_set}.
Our goal is to show the existence of a control, derive the 
first-order necessary and second-order sufficient optimality conditions within the realm of a 
rigorous mathematical framework. The first-order optimality 
conditions requires to show  that $G_v$ is Fr\'echet differentiable (\autoref{s:g_differentiable}) 
and the second  order conditions require $G_v$ to be twice Fr\'echet differentiable 
(\autoref{s:g_twice_diff}).

The steps described above are standard for PDE-constrained optimization in fixed domains \cite{FTroltzsch_2010a}, but our analysis for the linearized curvature OC-FBP is novel. The novelty resides in the highly nonlinear structure of the underlying FBP, which is posed in a pair of Banach spaces one being non-reflexive, and yet we deal with \emph{minimal regularity}. 
A number of other control problems for FBPs fall under a similar functional framework \cite{MElliott_MHinze_VStyles_2007a, MElliott_YGiga_MHinze_VStyles_2010a}, but their theory is not as complete and conclusive as ours. This appears to be an area of intense current research.

The first step in this voyage is to show that there exists a  unique weak solution to \eqref{eq:lin_st_const}, which implies that $G_v$ is a well defined one-to-one nonlinear operator. 
In fact, it is known \cite{PSaavedra_RScott_1991} 
that for $u=0$ and $v$ small, a fixed point argument asserts the existence and uniqueness of a weak solution 
$\del{\gamma,y}$ in $\bbW^1$ to \eqref{eq:lin_st_const}. We further extend this analysis to the case where $u \neq 0$. This gives us an 
open ball $\calU \subset \Ltwo{\I}$ where we can show the existence of solution to  \eqref{eq:lin_st_const}.
\subsection{Well-posedness of the State System \eqref{eq:lin_st_const}}
\label{s:g_well_posed}
The weak form of the system \eqref{eq:lin_st_const} is: 
find $(\gamma,y) \in \bbW^1$ such that
	\begin{equation}\left\{\begin{aligned}\label{eq:g_var_pde} 
		\BO\sbr{y+v,z;A\sbr{\gamma}} &= 0  && \forall z \in	\sobZ{1}{q}{\Omega} \\ 
		\BG\sbr{\gamma,\zeta} + \BO\sbr{y+v,E\zeta;A\sbr{\gamma}} &= 
		\pairSob{u,\zeta}{\infty}{1}{1}{\I} 
			&&	\forall \zeta \in \sobZ{1}{1}{\I},
	\end{aligned}\right.\end{equation}
where $\BG:\sobZ 1 \infty {\I} \times \sobZ 1 1 {\I} \to \bbR$, $\BO:\sobZ1p\Omega\times\sobZ1q\Omega\to\bbR$ are defined by 
	\begin{equation}\begin{aligned} \label{eq:bilinear_form}
    		\BG\sbr{\gamma,\zeta} &:= \kappa \int_0^1 {\totalD_{x_1}\gamma}(x_1) {\totalD_{x_1}\zeta}(x_1) \dif x_1, \\
    		\BO\sbr{y,z;A\sbr{\gamma}} &:= \int_\Omega A\sbr{\gamma} \grad y \cdot \grad z \dif x.
	\end{aligned}\end{equation}
Furthermore, $E:\sobZ{1}{1}{\I} \rightarrow \sob{1}{q}{\Omega}$, $1<q<2$ 
denotes a continuous extension such that $E\zeta\restriction_{\Gamma}=\zeta$, $E\zeta\restriction_{\Sigma} = 0$. In particular, this implies the existence of
a constant $C_E \ge 1$, which dependes on $\Omega$ and $q$ and blows up as $q$ approaches 2 
\cite[Lemma 2]{PSaavedra_RScott_1991}, such that
	\begin{align}  \label{eq:w1_wq_ext}
    	\normSZ{E\zeta}{1}{q}{\Omega} &\leq C_E \normSZ{\zeta}{1}{1}{\I},  \quad
    		\forall \zeta \in \sobZ{1}{1}{\I}.
	\end{align}
Moreover, when $u \in \calU \subset \Ltwo{\I}$ and the test function 
$\zeta \in \sobZ{1}{1}{\I}$, then  $\zeta \in L^2(\I)$ and we may write 
	\begin{align} \label{eq:u_zeta_pair} 
        \pairSob{u,\zeta}{\infty}{1}{1}{\I} = \int_0^1 u \zeta ,
	\end{align}
where $\sob{-1}{\infty}{\I}$ is the dual space of $\sobZ{1}{1}{\I}$; we refer to \cite{RAAdams_JJFFournier_2003}. 
Since $\|\zeta\|_{L^\infty(\I)} \le |\zeta|_{\sobZ11{\I}}$,
this also enables us to deduce that for $u \in \Ltwo{\I}$
	\begin{align}\label{eq:u_negative_to_l2}
		\normS{u}{-1}\infty{\I} 
		\leq \normLt{u}{\I}.
	\end{align}
We will make use of these two facts repeatedly throughout the rest of the 
paper.

\begin{prop}[$\inf$-$\sup$ conditions]\label{prop:b_infsup} The following conditions hold for the bilinear forms $\BG\sbr{\cdot,\cdot}$ and $\BO\sbr{\cdot,\cdot;A\sbr{\gamma}}$ defined in \eqref{eq:bilinear_form} :
\begin{enumerate}[(i)]
\item\label{item:b_infsup_i} $\BG\sbr{\cdot,\cdot}:\sobZ 1 \infty {\I} \times \sobZ 1 1 {\I} \to \bbR$  is continuous and there exists a constant $\alpha > 0$ such that  for every $\gamma \in \sobZ1\infty{\I}$ and $s \in \sobZ11{\I}$
	\begin{subequations}
		\begin{align}\label{eq:b1_infsup_a}
			\normSZ{\gamma}{1}{\infty}{\I} &\leq \alpha \sup_{0\neq \zeta\in\sobZ{1}{1}{\I}}
				\frac{\BG\sbr{\gamma,\zeta}}{\normSZ{\zeta}{1}{1}{\I}} ,  \\
			\label{eq:b1_infsup_b}
			\normSZ{s}{1}{1}{\I} &\leq \alpha \sup_{0\neq \zeta\in\sobZ{1}{\infty}{\I}}
				\frac{\BG\sbr{\zeta,s}}{\normSZ{\zeta}{1}{\infty}{\I}} . 
		\end{align}
	\end{subequations}
\item\label{item:b_infsup_ii} If $\gamma \in \sobZ{1}{\infty}{\I}$ satisfies \eqref{eq:state_const}, then $\BO\sbr{\cdot,\cdot;A\sbr{\gamma}}:\sobZ1p\Omega\times\sobZ1q\Omega\to\bbR$ is continuous and there exist constants $P,Q$ with  $Q < 2 < P$ and $\beta > 0$, such that for $p \in \intoo{Q,P}$ and for all $y \in \sobZ1p\Omega$ 
	\begin{align}\label{eq:b2_infsup}
		\normSZ{y}{1}{p}{\Omega} &\leq \beta \sup_{0\neq z\in
		\sobZ{1}{q}{\Omega}}\frac{\BO\sbr{y,z;A\sbr{\gamma}}}{\normSZ{z}{1}{q}{\Omega}}.
        \end{align}
\end{enumerate}	
\end{prop}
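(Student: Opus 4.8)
The plan is to treat the two parts by entirely different mechanisms: part~(\ref{item:b_infsup_i}) reduces to a pair of elementary constrained dualities between $\Linf{\I}$ and $\Lone{\I}$ that exploit the vanishing boundary values, whereas part~(\ref{item:b_infsup_ii}) is the $L^p$ isomorphism theory for a uniformly elliptic divergence-form operator with bounded measurable coefficients. For~(\ref{item:b_infsup_i}), continuity is immediate from H\"older: $\abs{\BG\sbr{\gamma,\zeta}} \le \kappa\normLi{\totalD_{x_1}\gamma}{\I}\normLo{\totalD_{x_1}\zeta}{\I} = \kappa\,\normSZ{\gamma}{1}{\infty}{\I}\normSZ{\zeta}{1}{1}{\I}$. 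For the two inf-sup bounds I would use that differentiation $\zeta\mapsto\totalD_{x_1}\zeta$ maps $\sobZ{1}{1}{\I}$ \emph{onto} the mean-zero subspace $\setdef{f\in\Lone{\I}}{\int_0^1 f=0}$ and $\sobZ{1}{\infty}{\I}$ onto $\setdef{g\in\Linf{\I}}{\int_0^1 g=0}$, via $\zeta(x)=\int_0^x\totalD_{x_1}\zeta$; the only subtlety is the mean-zero constraint, which costs a harmless factor of $2$.

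For \eqref{eq:b1_infsup_a}, since $\int_0^1\totalD_{x_1}\gamma=0$ one has $\esssup\totalD_{x_1}\gamma\ge0\ge\essinf\totalD_{x_1}\gamma$, so testing against mean-zero $f$ computes the quotient-norm $\inf_{c}\normLi{\totalD_{x_1}\gamma-c}{\I}=\tfrac12\del{\esssup\totalD_{x_1}\gamma-\essinf\totalD_{x_1}\gamma}\ge\tfrac12\normLi{\totalD_{x_1}\gamma}{\I}$, which yields \eqref{eq:b1_infsup_a} with $\alpha=2/\kappa$. For \eqref{eq:b1_infsup_b} I would test $\BG\sbr{\cdot,s}$ against the admissible direction $\zeta$ determined by $\totalD_{x_1}\zeta=\sgn\del{\totalD_{x_1}s}-\int_0^1\sgn\del{\totalD_{x_1}s}$, whose $\Linf{\I}$-norm is at most $2$ and which recovers $\BG\sbr{\zeta,s}=\kappa\normLo{\totalD_{x_1}s}{\I}$ after using $\int_0^1\totalD_{x_1}s=0$; this gives \eqref{eq:b1_infsup_b} again with $\alpha=2/\kappa$.

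For part~(\ref{item:b_infsup_ii}), continuity follows from H\"older together with the uniform bound $\norm{A\sbr\gamma}_{L^\infty(\Omega)^{2\times2}}\le C_A$ of Proposition~\ref{prop:op_A}. The key structural observation is that, under the state constraint \eqref{eq:state_const} (equivalently $\abs{\totalD_{x_1}\gamma\,x_2}\le1$ and $\abs{\gamma}\le\tfrac12$ on $\Omega$), the symmetric matrix $A\sbr\gamma$ satisfies $\det A\sbr\gamma=1$ pointwise while $\trace A\sbr\gamma$ is controlled by $C_A$; hence its eigenvalues are bounded above and below by constants depending only on $C_A$, giving a uniform ellipticity bound $A\sbr\gamma\xi\cdot\xi\ge c_0\abs{\xi}^2$ with $c_0>0$ independent of $\gamma$. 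At the self-dual exponent $p=q=2$, taking $z=y$ then yields $\BO\sbr{y,y;A\sbr\gamma}\ge c_0\normSZ{y}{1}{2}{\Omega}^2$, so \eqref{eq:b2_infsup} holds at $p=2$ with $\beta=c_0^{-1}$, uniformly in $\gamma$.

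The main obstacle is extending \eqref{eq:b2_infsup} from $p=2$ to an open interval $(Q,P)\ni2$, since for merely bounded measurable coefficients no elementary energy estimate is available off the Hilbert-space exponent. I would phrase this as the statement that $w\mapsto-\divg\del{A\sbr\gamma\grad w}$ is an isomorphism from $\sobZ{1}{p}{\Omega}$ onto $\sob{-1}{p}{\Omega}$, and obtain it by either of two routes. The first is Meyers' higher-integrability estimate, which upgrades $\Ltwo{\Omega}$-gradients of solutions to $L^{2+\delta}$ with $\delta$ depending only on the ellipticity ratio and hence uniform in $\gamma$; since $A\sbr\gamma$ is symmetric, duality then covers the conjugate side $p<2$ as well, producing a symmetric interval around $2$. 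The second, cleaner, route invokes the \v{S}ne\u{\i}berg stability theorem on the complex interpolation scale $\cbr{\sob{-1}{p}{\Omega}}$: an operator invertible at an interior point (here $p=2$) stays invertible on an open neighbourhood, with the norm of the inverse blowing up at the endpoints. This is exactly the behaviour recorded in \cite[Lemma~2]{PSaavedra_RScott_1991}, and it explains why $\beta$ (equivalently the extension constant $C_E$ of \eqref{eq:w1_wq_ext}) degenerates as $p\to P$, i.e.\ as $q\to2$. In either route it is the uniformity of $c_0$ across all admissible $\gamma$ that lets $(Q,P)$ and $\beta$ be chosen independently of $\gamma$, which is precisely the uniformity needed in the subsequent well-posedness and differentiability arguments.
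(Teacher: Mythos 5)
Your proposal is correct, and it only coincides with the paper's proof on \eqref{eq:b1_infsup_b}: there the paper argues exactly as you do, testing $\BG\sbr{\cdot,s}$ against $\zeta(x_1)=\int_0^{x_1}\del{\sgn(\totalD_{x_1}s)-\int_0^1\sgn(\totalD_{x_1}s)}$, using $\int_0^1\totalD_{x_1}s=0$ and $\normSZ{\zeta}1\infty{\I}\le 2$, which yields $\alpha=2/\kappa$. The other two estimates the paper disposes of purely by citation, attributing \eqref{eq:b1_infsup_a} and \eqref{eq:b2_infsup} to \cite[Propositions 2.2--2.3]{PSaavedra_RScott_1991} and \cite{NGMeyers_1963}, whereas you supply arguments. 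For \eqref{eq:b1_infsup_a} your subspace-duality argument is complete and elementary: $\zeta\mapsto\totalD_{x_1}\zeta$ is an isometry of $\sobZ11{\I}$ onto the mean-zero subspace of $\Lone{\I}$, the norm of $\totalD_{x_1}\gamma$ as a functional on that subspace is the quotient norm $\inf_c\normLi{\totalD_{x_1}\gamma-c}{\I}$, and since $\int_0^1\totalD_{x_1}\gamma=0$ the essential supremum of $\totalD_{x_1}\gamma$ is nonnegative and its essential infimum nonpositive, so the quotient norm is at least $\tfrac12\normLi{\totalD_{x_1}\gamma}{\I}$; this recovers the same constant $\alpha=2/\kappa$ and is a clean replacement for the citation. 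For part (ii), your observation that $\det A\sbr\gamma\equiv 1$ pointwise while $\trace A\sbr\gamma$ stays bounded under \eqref{eq:state_const}, hence uniform ellipticity with a constant independent of $\gamma$, followed by Meyers (or \v{S}ne\u{\i}berg) to move off $p=2$, is precisely the content hidden behind the references; making the uniformity of $(Q,P)$ and $\beta$ over all admissible $\gamma$ explicit is valuable, since the paper relies on that uniformity in the subsequent well-posedness and differentiability arguments without comment.

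One side remark of yours is wrong, though it costs nothing in the proof: $\beta$ and $C_E$ do not degenerate ``equivalently'', and ``$p\to P$, i.e.\ as $q\to 2$'' is a non sequitur. Since $1/p+1/q=1$, $q\to 2$ corresponds to $p\to 2$; as $p\to P$ one has $q\to P/(P-1)<2$. The extension constant $C_E$ of \eqref{eq:w1_wq_ext} (that is \cite[Lemma 2]{PSaavedra_RScott_1991}) blows up as $q\to 2^{-}$, i.e.\ as $p\to 2^{+}$, the \emph{center} of the Meyers interval, while the inf-sup constant degenerates at the \emph{endpoints} $p\to Q^{+},P^{-}$. For the proposition as stated this is immaterial: shrink $(Q,P)$ slightly so that a single finite $\beta$ serves on the whole interval.
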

\begin{proof}
For \eqref{eq:b1_infsup_a} and \eqref{eq:b2_infsup} we refer to \cite[Propositions
2.2-2.3]{PSaavedra_RScott_1991} and \cite{NGMeyers_1963} for a proof. For \eqref{eq:b1_infsup_b} we proceed as follows: 
applying the definition of the $L^1$-norm and the homogeneous Dirichlet values of $s$, we obtain
	\begin{align*}
		\normSZ{s}11{\I} &= \int_0^1 \abs{\totalD_{x_1}s} = \int_0^1 \sgn(\totalD_{x_1}s) \totalD_{x_1}s  
	\end{align*}
	Using the fact that $s\in \sobZ11{\I}$, we get $\int_0^1 \totalD_{x_1} s  = 0$, whence 
	\begin{align*}	
	    \normSZ{s}11{\I}
			&= \int_0^1 \underbrace{\del{\sgn(\totalD_{x_1}s) - \int_0^1 \sgn(\totalD_{x_1}s)}}_{=\totalD_{x_1}\zeta} \totalD_{x_1}s 
			= \frac{1}{\kappa} \BG\sbr{\zeta, s},  
	\end{align*}
where $\zeta(x_1) = \int_0^{x_1} \del{\sgn(\totalD_{x_1}s) - \int_0^1\sgn(\totalD_{x_1}s)} \in \sobZ1\infty{\I}$.  Estimate \eqref{eq:b1_infsup_b}  follows by noting that $\normSZ{\zeta}1\infty{\I} \leq 2$, and taking the $\sup$ over every $\zeta \in \sobZ 1 \infty {\I}$. 
\end{proof}

The following lemma demonstrates how one can improve the integrability index of a solution to a PDE obtained by standard methods.
\begin{lem}[improved integrability]\label{lem:baneso} Let $\Omega$ be an open Lipschitz bounded domain of $\bbR^d$ and  $\calB : \sobZ1\infty\Omega \times \sobZ11\Omega \to \bbR$ be a continuous bilinear form. Furthermore, suppose that 
	\begin{enumerate}[(i)]
		\item there exists $\alpha > 0$ such that 
			\begin{align} \label{eq:baneso_inf_sup_c}
				\normSZ{\chi}{1}{\infty}{\Omega} \leq \alpha \sup_{0\neq \psi\in\sobZ{1}{1}{\Omega}}
					\frac{\calB\sbr{\chi,\psi}}{\normSZ{\psi}{1}{1}{\Omega}} \qquad  \forall \chi \in \sobZ1\infty\Omega, 
			\end{align}
		\item and $\calB$ is continuous and coercive in $\sobZ{1}2\Omega$.
	\end{enumerate}
Then for every $F \in \sobZ11\Omega^*$, there exists a unique $\chi \in \sobZ{1}{\infty}\Omega$ such that 
	\begin{align}\label{eq:baneso_well_posed}
		\calB\sbr{\chi,\psi} = F(\psi) \mbox{ for all }  \psi \in \sobZ{1}{1}{\Omega} \mbox{ and } 
		\normSZ{\chi}1\infty\Omega \leq \alpha \normSD{F}11\Omega. 
	\end{align}
\end{lem}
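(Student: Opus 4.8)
The plan is to read \eqref{eq:baneso_well_posed} as a well-posedness statement for the operator $\mathcal{A}\colon \sobZ1\infty\Omega \to \sobZ11\Omega^*$, $\mathcal{A}\chi := \calB\sbr{\chi,\cdot}$, within the rigged triple $\sobZ1\infty\Omega \hookrightarrow \sobZ12\Omega \hookrightarrow \sobZ11\Omega$, whose embeddings are all continuous and dense since $\Omega$ is bounded. Hypothesis \eqref{eq:baneso_inf_sup_c} says exactly that $\mathcal{A}$ is bounded below by $1/\alpha$, so it is injective with closed range; this already delivers uniqueness and the a priori estimate $\normSZ{\chi}1\infty\Omega \le \alpha \normSD{F}11\Omega$. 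The coercivity hypothesis (ii) is then responsible only for existence, so the proof reduces to producing a solution that actually lives in $\sobZ1\infty\Omega$.

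First I would set $H := \sobZ12\Omega$ and solve in $H$. Because $H \hookrightarrow \sobZ11\Omega$ continuously, every $F \in \sobZ11\Omega^*$ restricts to $F|_H \in H^*$ with $\norm{F|_H}_{H^*} \le C\,\normSD{F}11\Omega$, while hypothesis (ii) makes $\calB$ continuous and coercive on $H \times H$. The Lax--Milgram lemma then yields a unique $\chi \in H$ with $\calB\sbr{\chi,\psi} = F(\psi)$ for all $\psi \in H$. Since $H$ is dense in $\sobZ11\Omega$ and $F$ is $\sobZ11\Omega$-continuous, the functional $\psi \mapsto \calB\sbr{\chi,\psi}$ is, on $H$, bounded by $\normSD{F}11\Omega$ in the $\sobZ11\Omega$-norm.

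It remains to upgrade this Hilbert-space solution to $\sobZ1\infty\Omega$ and to extend the identity to all test functions. Once $\chi \in \sobZ1\infty\Omega$ is known, $\psi \mapsto \calB\sbr{\chi,\psi}$ is genuinely $\sobZ11\Omega$-continuous, hence coincides with $F$ on the dense subspace $H$ and therefore on all of $\sobZ11\Omega$; inserting this into \eqref{eq:baneso_inf_sup_c} closes the argument with the sharp constant $\alpha$. I would try to secure the regularity $\chi \in \sobZ1\infty\Omega$ either by approximating $\chi$ with $\chi_n \in \sobZ1\infty\Omega$ (dense in $H$) and applying \eqref{eq:baneso_inf_sup_c} to each $\chi_n$, or, more robustly, by first solving in the reflexive scale $\sobZ1p\Omega$ for $2 \le p < \infty$ --- where the Banach--Ne\v{c}as--Babu\v{s}ka theory applies directly --- and then letting $p \to \infty$, controlling the stability constants by interpolating between the $p=2$ coercivity and the endpoint bound \eqref{eq:baneso_inf_sup_c}.

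The hard part will be precisely this passage to the $L^\infty$ endpoint, and the difficulty is structural: the test space $\sobZ11\Omega$ is non-reflexive, so the surjectivity half of the Banach--Ne\v{c}as--Babu\v{s}ka theorem is unavailable, and $H$-convergence $\chi_n \to \chi$ does not control $\calB\sbr{\chi_n-\chi,\psi}$ uniformly over the $\sobZ11\Omega$-unit ball, on which the $H$-norm is unbounded. Any correct argument must therefore exploit the full strength of \eqref{eq:baneso_inf_sup_c}, rather than mere density of $\sobZ1\infty\Omega$ in $H$, to certify that the gradient of the $H$-solution actually lies in $L^\infty$; the interpolation route, in the spirit of the operator interpolation used elsewhere in the paper, seems the most promising way to make this rigorous.
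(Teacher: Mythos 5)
Your first two paragraphs coincide with the paper's own argument: uniqueness and the a priori bound $\normSZ{\chi}1\infty\Omega \le \alpha \normSD{F}11\Omega$ follow from \eqref{eq:baneso_inf_sup_c} alone; existence of some $\chi \in \sobZ{1}{2}{\Omega}$ comes from Lax--Milgram applied to the restriction of $F$ to $\sobZ{1}{2}{\Omega}$; and the observation that $\psi \mapsto \calB\sbr{\chi,\psi}$ is bounded on $\sobZ{1}{2}{\Omega}$ by $\normSD{F}11\Omega$ in the $\sobZ{1}{1}{\Omega}$-norm (because it equals $F$ there) is exactly the paper's mechanism for extending the identity $\calB\sbr{\chi,\psi}=F(\psi)$ to all $\psi \in \sobZ{1}{1}{\Omega}$ by density. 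Up to that point you are reproducing the paper's proof.

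However, your write-up stops where the lemma's real content begins: you never establish that the Lax--Milgram solution belongs to $\sobZ{1}{\infty}{\Omega}$, and you say so yourself. Since uniqueness and the estimate are conditional on that membership, the proposal is not a complete proof, and neither of the two routes you sketch can be made to work as described. The approximation route ($\chi_n \to \chi$ in $\sobZ{1}{2}{\Omega}$ with $\chi_n \in \sobZ{1}{\infty}{\Omega}$) fails for precisely the reason you give: convergence in $\sobZ{1}{2}{\Omega}$ does not control $\calB\sbr{\chi_n-\chi,\cdot}$ on the $\sobZ{1}{1}{\Omega}$-unit ball. The interpolation route is circular: the hypotheses provide no inf-sup condition at any intermediate exponent $2<p<\infty$, and interpolating the solution operator up to the endpoint would require already knowing that it maps into $\sobZ{1}{\infty}{\Omega}$, which is the claim to be proved. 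The paper closes this step directly, if tersely: having extended $\calB\sbr{\chi,\cdot}$ to $\sobZ{1}{1}{\Omega}$ by continuity, it applies \eqref{eq:baneso_inf_sup_c} to this very $\chi$ --- that is, it reads the inf-sup hypothesis as an a priori bound valid for the constructed solution (finiteness of the supremum, with $\calB\sbr{\chi,\cdot}$ understood as the continuous extension, certifying $\chi \in \sobZ{1}{\infty}{\Omega}$), which yields membership and the constant-$\alpha$ estimate in one stroke. Your diagnosis that this is the crux, and that non-reflexivity of $\sobZ{1}{1}{\Omega}$ blocks the Banach--Ne\v{c}as route, is accurate; but a proof must commit to that final step rather than flag it as promising, and that commitment is what separates the paper's argument from your proposal.
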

\begin{proof}Since $\sobZ{1}{2}{\Omega} \subset \sobZ{1}{1}{\Omega}$, it follows that $F \in \sobZ{1}{1}{\Omega}^* \subset \sobZ{1}{2}{\Omega}^*$. The Lax-Milgram lemma guarantees the existence and uniqueness of $\chi \in \sobZ{1}{2}{\Omega}$ such that $\calB\sbr{\chi,\psi} = F(\psi)$ for all $\psi \in \sobZ{1}{2}{\Omega}$. 

Next, we extend $\calB\sbr{\chi,\cdot}$ as a linear functional on $\sobZ{1}{1}{\Omega}$. To this end, let $\seq{\psi_n}\subset \sobZ{1}{2}{\Omega}$ be a Cauchy sequence in the $\sobZ{1}{1}{\Omega}$-norm. It immediately follows that $\seq{\calB\sbr{\chi,\psi_n}}$ is also Cauchy in $\bbR$, i.e. $$\abs{\calB\sbr{\chi,\psi_n-\psi_m}} = \abs{F(\psi_n - \psi_m)} \leq \norm{F}_{\sobZ11\Omega^*}\normSZ{\psi_n - \psi_m}11\Omega.$$ 

Finally, by the density of $\sobZ{1}{2}{\Omega}$ in $\sobZ11\Omega$, not only do we obtain $\psi_n\to \psi \in \sobZ11\Omega$, but also  $$\calB\sbr{\chi, \psi} := \lim_{n\to\infty} \calB\sbr{\chi, \psi_n} = \lim_{n\to\infty} F(\psi_n) = F(\psi).$$ The estimate for $\normSZ{\chi}1\infty{\Omega}$ follows from \eqref{eq:baneso_inf_sup_c}.
\end{proof}

\subsubsection{First-order Regularity}
Now we are ready to prove that there exists a unique solution to \eqref{eq:g_var_pde} with first-order regularity. 
Since the system \eqref{eq:g_var_pde} is nonlinear we will obtain this result by applying the Banach fixed point theorem 
combined with a smallness assumption on a non-trivial $v$. To this end, we let $2 < p < P$ and equip the space 
$\bbW^1 = \sobZ 1 \infty {\I} \times\sobZ1 p \Omega$ with the equivalent norm
\begin{align}  \label{eq:W1_equiv_norm}
 \norm{\del{\gamma,y}}_{\bbW^1} :=\del{1+\beta C_A}\normSZ{v}1p\Omega \normSZ{\gamma}{1}{\infty}{{\I}} +
		\normSZ{y}{1}{p}{\Omega},
\end{align}		
where $C_A$ and $\beta$ are given in  \eqref{eq:A_unif_bound} and \eqref{eq:b2_infsup}, 
and define the closed (convex) ball 
\begin{align}    \label{eq:W1_convex_set}
    \bbB_v  := \set{(\gamma,y) \in \bbW^1 :\;
		\normSZ{y}{1}{p}{\Omega} \leq \beta C_A \normSZ v 1 p \Omega,\; \normSZ{\gamma}1\infty{\I} \leq 1}.
\end{align}		
Furthermore, consider the operator $T:\bbB_v \rightarrow \bbW^1$ defined as
	\begin{align} \label{eq:T_oper}
		T(\gamma, y) &:=  \del{T_1(\gamma, y), T_2(\gamma, y)} = \del{\gtl, \ytl} && \forall \del{\gamma,y} \in \bbB_v,
	\end{align}
where $\gtl = T_1(\gamma, y) \in \sobZ1\infty{\I}$ satisfies for every  $\zeta \in \sobZ{1}{1}{\I}$
	\begin{equation}\label{eq:t1_var}
		\BG\sbr{\gtl,\zeta} = -\BO\sbr{y + v,E\zeta;A\sbr{\gamma}} + \pairSob{u,\zeta}{\infty}{1}{1}{\I},
	\end{equation}
and $\ytl = T_2(\gamma,y) \in \sobZ1p\Omega$ satisfies for every $z \in \sobZ{1}{q}{\Omega}$
	\begin{equation}\label{eq:t2_var}
		\BO\sbr{\ytl + v,z;A\sbr{T_1\del{\gamma,y}}} = 0 .
	\end{equation}
With these definitions at hand we proceed to find conditions under which $T$ not only maps $\bbB_v$ into itself but is in fact a contraction in $\bbB_v$.

\begin{lem}[range of $T$] \label{lem:t_maps_weps}
Let $T_1$ and $T_2$ be the operators defined in \eqref{eq:t1_var} and \eqref{eq:t2_var}, and $C_A$ and $C_E$ be the constants defined in \eqref{eq:A_unif_bound} and \eqref{eq:w1_wq_ext}. Furthermore, suppose there exists $\theta_1 \in \del{\beta C_A/(1+\beta C_A),1}$ such that 
	\begin{align}\label{eq:v_invariant}
		\normSZ{v}1p\Omega \leq \del{1-\theta_1}\del{\alpha C_E C_A\del{1+\beta C_A}}^{-1}. 
	\end{align}
If $u \in \Ltwo{\I}$ with $\normLt{u}{\I} \leq \theta_1/\alpha$, then the range of $T$ is contained in $\bbB_v$.

\end{lem}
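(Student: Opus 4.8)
The plan is to verify directly that $T$ maps $\bbB_v$ into itself, i.e. that for any $(\gamma,y)\in\bbB_v$ the image $(\gtl,\ytl)=T(\gamma,y)$ satisfies the two defining inequalities $\normSZ{\gtl}1\infty{\I}\le 1$ and $\normSZ{\ytl}1p\Omega\le\beta C_A\normSZ{v}1p\Omega$. The crucial structural point is that these bounds must be established \emph{in order}: the estimate for $\ytl$ relies on the inf-sup condition \eqref{eq:b2_infsup} for $\BO\sbr{\cdot,\cdot;A\sbr{\gtl}}$, which is only available once we know that $\gtl$ obeys the state constraint \eqref{eq:state_const}; and $\normSZ{\gtl}1\infty{\I}\le 1$ is precisely that constraint. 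So I would first bound $\gtl$, then feed that bound into the estimate for $\ytl$.

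For $\gtl=T_1(\gamma,y)$, I would start from the inf-sup inequality \eqref{eq:b1_infsup_a} and test \eqref{eq:t1_var} against $\zeta\in\sobZ11{\I}$. The right-hand side of \eqref{eq:t1_var} splits into two pieces. For the bilinear term, H\"older's inequality together with the uniform bound $\norm{A\sbr{\gamma}}_{L^\infty(\Omega)^{2\times2}}\le C_A$ from Proposition~\ref{prop:op_A} (applicable since $(\gamma,y)\in\bbB_v$ forces $\normSZ{\gamma}1\infty{\I}\le 1$) and the extension estimate \eqref{eq:w1_wq_ext} yield
\[
	\abs{\BO\sbr{y+v,E\zeta;A\sbr{\gamma}}}\le C_A C_E\del{\normSZ{y}1p\Omega+\normSZ{v}1p\Omega}\normSZ{\zeta}11{\I}.
\]
For the duality term I would use $\abs{\pairSob{u,\zeta}{\infty}{1}{1}{\I}}\le\normS{u}{-1}\infty{\I}\normSZ{\zeta}11{\I}\le\normLt{u}{\I}\normSZ{\zeta}11{\I}$ via \eqref{eq:u_negative_to_l2}. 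Dividing by $\normSZ{\zeta}11{\I}$, taking the supremum, and using $\normSZ{y}1p\Omega\le\beta C_A\normSZ{v}1p\Omega$ from membership in $\bbB_v$ (so that $\normSZ{y}1p\Omega+\normSZ{v}1p\Omega\le(1+\beta C_A)\normSZ{v}1p\Omega$) gives
\[
	\normSZ{\gtl}1\infty{\I}\le\alpha\sbr{C_A C_E(1+\beta C_A)\normSZ{v}1p\Omega+\normLt{u}{\I}}.
\]
Substituting the smallness hypothesis \eqref{eq:v_invariant} and $\normLt{u}{\I}\le\theta_1/\alpha$, the two summands are bounded by $(1-\theta_1)/\alpha$ and $\theta_1/\alpha$ respectively, so the bracket is at most $1/\alpha$ and hence $\normSZ{\gtl}1\infty{\I}\le 1$.

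For $\ytl=T_2(\gamma,y)$, the previous step now guarantees that $\gtl$ satisfies \eqref{eq:state_const}, so the inf-sup condition \eqref{eq:b2_infsup} holds with $\gamma$ replaced by $\gtl$. Rewriting the defining equation \eqref{eq:t2_var} as $\BO\sbr{\ytl,z;A\sbr{\gtl}}=-\BO\sbr{v,z;A\sbr{\gtl}}$, H\"older's inequality together with $\norm{A\sbr{\gtl}}_{L^\infty(\Omega)^{2\times2}}\le C_A$ gives $\abs{\BO\sbr{\ytl,z;A\sbr{\gtl}}}\le C_A\normSZ{v}1p\Omega\normSZ{z}1q\Omega$, and \eqref{eq:b2_infsup} then produces $\normSZ{\ytl}1p\Omega\le\beta C_A\normSZ{v}1p\Omega$, which is the second defining inequality of $\bbB_v$.

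The main difficulty—or rather the delicate bookkeeping—is the tight calibration of constants in the $\gtl$ estimate. The hypotheses \eqref{eq:v_invariant} and $\normLt{u}{\I}\le\theta_1/\alpha$ are engineered so that the contributions of $v$ and of $u$ partition the unit bound as $(1-\theta_1)+\theta_1=1$; any looser estimate (e.g. failing to use the sharp $\normS{u}{-1}\infty{\I}\le\normLt{u}{\I}$, or not exploiting $(\gamma,y)\in\bbB_v$ to collapse $\normSZ{y}1p\Omega+\normSZ{v}1p\Omega$) would overshoot $1$ and destroy the invariance. The secondary subtlety, already flagged above, is the logical ordering that forces the $\gtl$ bound to precede the $\ytl$ bound, so that Proposition~\ref{prop:op_A} and \eqref{eq:b2_infsup} are legitimately applicable at $\gtl$.
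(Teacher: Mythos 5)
Your estimates coincide with the paper's own, constant for constant: the bound $\normSZ{\gtl}1\infty{\I}\le \alpha\sbr{C_EC_A\del{1+\beta C_A}\normSZ{v}1p\Omega + \normLt{u}{\I}} \le \del{1-\theta_1}+\theta_1 = 1$ and the bound $\normSZ{\ytl}1p\Omega \le \beta C_A\normSZ{v}1p\Omega$ are exactly how the paper partitions the budget, and your observation that the $\gtl$ bound must precede the $\ytl$ bound (so that Proposition~\ref{prop:op_A} and \eqref{eq:b2_infsup} legitimately apply at $\gtl$) is correct and indeed only implicit in the paper's write-up. However, there is a genuine gap: your argument is purely an \emph{a priori} estimate --- you test \eqref{eq:t1_var} and \eqref{eq:t2_var} and bound whatever solution happens to exist --- but you never show that $\gtl$ and $\ytl$ exist and are unique, i.e.\ that $T$ is a well-defined map in the first place, which is the first thing the paper's proof establishes. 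For $T_2$ the omission is mild: $\sobZ1p\Omega$ and $\sobZ1q\Omega$ are reflexive, so Proposition~\ref{prop:b_infsup}(ii) together with the Banach--Ne\v{c}as theorem gives existence and uniqueness. For $T_1$ it is substantive: the trial space $\sobZ1\infty{\I}$ and test space $\sobZ11{\I}$ are non-reflexive, and the inf-sup condition \eqref{eq:b1_infsup_a} alone does \emph{not} produce a solution in this pairing.

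The paper's device for $T_1$ is Lemma~\ref{lem:baneso} (improved integrability): since $\BG$ is continuous and coercive on $\sobZ12{\I}$, and since the right-hand side $F(\zeta) := -\BO\sbr{y+v,E\zeta;A\sbr{\gamma}} + \pairSob{u,\zeta}{\infty}{1}{1}{\I}$ of \eqref{eq:t1_var} belongs to $\sobZ11{\I}^*$, one solves by Lax--Milgram in $\sobZ12{\I}$, extends to test functions in $\sobZ11{\I}$ by density, and only then reads off the $\sob1\infty{\I}$ bound from the inf-sup condition. Note that your estimate $\abs{F(\zeta)}\le \del{C_EC_A\del{1+\beta C_A}\normSZ{v}1p\Omega + \normLt{u}{\I}}\normSZ{\zeta}11{\I}$ is exactly the hypothesis that lemma needs, so the repair is purely structural: route the $\gtl$ step through Lemma~\ref{lem:baneso} instead of asserting the bound directly, and cite Banach--Ne\v{c}as for $\ytl$. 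With those two insertions your proof becomes the paper's proof.
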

\begin{proof} Let $\del{\gamma, y} \in \bbB_v$ be fixed but arbitrary. First we rely on \lemref{lem:baneso} to show the well-posedness of $T_1$. Since it is straight-forward to check that 
$\BG$ is continuous and coercive in $\sobZ12{\I}$, we only need to show the regularity of the forcing term in \eqref{eq:t1_var}. If we define 
$F(\zeta) := -\BO\sbr{y + v,E\zeta;A\sbr{\gamma}} + \pair{u,\zeta}$  and use \eqref{eq:A_unif_bound}, \eqref{eq:w1_wq_ext} and \eqref{eq:W1_convex_set} we find that
\begin{align} \label{eq:chi_w1}
   \abs{F(\zeta)} 
   			&\le C_A \del{\normSZ{y}{1}{p}{\Omega} + \normSZ{v}{1}{p}{\Omega}} \normSZ{E\zeta}{1}{q}{\Omega} 
   				+\normLt{u}{\I}\normSZ{\zeta}{1}{1}{\I} \nonumber\\ 
   			&\le \del{C_E C_A \del{1+\beta C_A}\normSZ{v}{1}{p}{\Omega}
   				+ \normLt{u}{\I}}\normSZ{\zeta}{1}{1}{\I}, 
\end{align}
whence $F \in \sobZ11{\I}^*$ and we conclude from \eqref{eq:baneso_well_posed} that
\begin{align*}
       \normSZ{\gtl}{1}{\infty}{\I} 		
    &\leq \alpha\del{C_E C_A \del{1+\beta C_A}\normSZ{v}{1}{p}{\Omega}
   				+ \normLt{u}{\I}} 
    \leq \del{1-\theta_1} + \theta_1  = 1.
\end{align*}		   

The well-posedness of $T_2$ follows by \propref{prop:b_infsup} and
 the Banach-Ne\v{c}as theorem for reflexive Banach spaces \cite[Theorem 2.6]{AErn_JLGuermond_2004a}. Applying \eqref{eq:b2_infsup} we obtain
	\[
    	\normSZ{\ytl}{1}{p}{\Omega} \leq  \beta C_A \normSZ{v}{1}{p}{\Omega}.
	\] 
Since $\del{\gamma, y}$ is arbitrary, we conclude that the range of $T$ is contained in $\bbB_v$.
\end{proof}


\begin{defn}[control sets $\calU$ and $\Uad$]\label{defn:const_set}
Let $\theta_1$ be as in \lemref{lem:t_maps_weps}. We define 
the (nontrivial) open ball $\calU \subset L^2(\I)$ as
\begin{equation}    \label{eq:open_ball}
\calU := \set{ u \in \Ltwo{\I}\;:\; \normLt{u}{\I} < \theta_1 /\alpha} ,
\end{equation}
and the admissible set of controls $\Uad$ as the (nontrivial) closed ball
\begin{equation}    \label{eq:v_u_const_set}
\Uad := \set{ u \in \calU \;:\; \normLt{u}{\I} \leq \theta_1 /2\alpha}.
\end{equation}
\end{defn}

We may wonder about the presence of $\calU$ in Definition~\ref{defn:const_set}. This will enable us to prove the
Fr\'echet differentiability of $G_v$ at any $u \in \Uad$ later in \secref{s:g_first_deriv}. In the next theorem we will show that the state equations are solvable for any $ u \in \calU$.

\begin{thm}[$T$ is a contraction]\label{thm:t_contraction} 
Let the assumptions of Lemma~\ref{lem:t_maps_weps} hold and suppose further that there exists a $\theta_2 \in \intoo{0,1}$ such that 
	\begin{align} \label{eq:v_contraction}
		\normSZ{v}1p\Omega \leq (1-\theta_2) \del{\alpha C_E C_A\del{1+\beta C_A}^2}^{-1}.
	\end{align}
Then, the map  $T$ defined in \eqref{eq:T_oper} is a contraction in $\bbB_v$ with constant $1-\theta_2$ for all $u \in \calU$.
\end{thm}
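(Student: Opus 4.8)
The plan is to show that for any two points $(\gamma_1,y_1),(\gamma_2,y_2)\in\bbB_v$, the $\bbW^1$-distance between their images contracts by the factor $1-\theta_2$. I would introduce the shorthand $(\gtl_i,\ytl_i):=T(\gamma_i,y_i)$ and set $\delta\gamma:=\gtl_1-\gtl_2$, $\delta y:=\ytl_1-\ytl_2$. The two coordinates of $T$ are coupled through the definition $\gtl_i=T_1(\gamma_i,y_i)$ and $\ytl_i=T_2(\gamma_i,y_i)=T_2$ solving the bulk equation with coefficient $A[T_1(\gamma_i,y_i)]$, so the estimate must be carried out in the right order: first control $\delta\gamma$ using the $\BG$ inf-sup bound \eqref{eq:b1_infsup_a}, then control $\delta y$ using the $\BO$ inf-sup bound \eqref{eq:b2_infsup}, and finally assemble both into the equivalent norm \eqref{eq:W1_equiv_norm}.

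For the first coordinate I would subtract the two copies of \eqref{eq:t1_var} to obtain, for every $\zeta\in\sobZ11{\I}$,
\begin{equation*}
\BG\sbr{\delta\gamma,\zeta}=-\BO\sbr{y_1+v,E\zeta;A\sbr{\gamma_1}}+\BO\sbr{y_2+v,E\zeta;A\sbr{\gamma_2}},
\end{equation*}
noting that the $\pair{u,\zeta}$ terms cancel since the same $u$ is used. I would then split the right-hand side into a piece measuring the difference in the argument, $\BO\sbr{(y_1-y_2),E\zeta;A\sbr{\gamma_1}}$, and a piece measuring the difference in the coefficient, $\BO\sbr{y_2+v,E\zeta;A\sbr{\gamma_1}-A\sbr{\gamma_2}}$. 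The first is bounded by $C_A\normSZ{y_1-y_2}1p\Omega\,\normSZ{E\zeta}1q\Omega$, and the second by controlling $A\sbr{\gamma_1}-A\sbr{\gamma_2}$ in $L^\infty(\Omega)^{2\times2}$ via the Lipschitz structure of $A$ implicit in \eqref{eq:A_unif_bound}, together with the bound $\normSZ{y_2+v}1p\Omega\le(1+\beta C_A)\normSZ v1p\Omega$ valid on $\bbB_v$. Using the extension bound \eqref{eq:w1_wq_ext} to pass $\normSZ{E\zeta}1q\Omega\le C_E\normSZ\zeta11{\I}$ and dividing by $\normSZ\zeta11{\I}$, the inf-sup estimate \eqref{eq:b1_infsup_a} yields a bound of the form $\normSZ{\delta\gamma}1\infty{\I}\le \alpha C_E C_A\big(\normSZ{y_1-y_2}1p\Omega+(1+\beta C_A)\normSZ v1p\Omega\,\normSZ{\gamma_1-\gamma_2}1\infty{\I}\big)$.

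For the second coordinate I would subtract the two copies of \eqref{eq:t2_var}; since $\ytl_i$ solves the bulk equation with coefficient $A\sbr{\gtl_i}$, the difference $\delta y$ satisfies $\BO\sbr{\ytl_1+v,z;A\sbr{\gtl_1}}-\BO\sbr{\ytl_2+v,z;A\sbr{\gtl_2}}=0$. Rearranging so that $\BO\sbr{\delta y,z;A\sbr{\gtl_1}}=-\BO\sbr{\ytl_2+v,z;A\sbr{\gtl_1}-A\sbr{\gtl_2}}$, I would apply \eqref{eq:b2_infsup} and bound the right side using $\normSZ{\ytl_2+v}1p\Omega\le(1+\beta C_A)\normSZ v1p\Omega$ and the Lipschitz bound on $A$ to get $\normSZ{\delta y}1p\Omega\le\beta C_A(1+\beta C_A)\normSZ v1p\Omega\,\normSZ{\delta\gamma}1\infty{\I}$, thereby re-expressing $\delta y$ in terms of the already-estimated $\delta\gamma$. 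The final step is to insert both estimates into \eqref{eq:W1_equiv_norm}; the weight $(1+\beta C_A)\normSZ v1p\Omega$ on the $\gamma$-component is precisely engineered so that, after collecting terms and invoking the smallness hypothesis \eqref{eq:v_contraction}, the combined factor multiplying $\norm{(\gamma_1-\gamma_2,y_1-y_2)}_{\bbW^1}$ collapses to $1-\theta_2$.

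The main obstacle I anticipate is the bookkeeping of constants in this last assembly: the $\gamma$-estimate carries both a $\normSZ{y_1-y_2}1p\Omega$ contribution and a $\normSZ{\gamma_1-\gamma_2}1\infty{\I}$ contribution, while the $y$-estimate depends on the \emph{new} difference $\delta\gamma$ rather than the old $\gamma_1-\gamma_2$, so one must substitute carefully and verify that the factor $\del{\alpha C_E C_A(1+\beta C_A)^2}^{-1}$ appearing in \eqref{eq:v_contraction} is exactly what is needed to absorb the worst term. A secondary technical point is justifying the Lipschitz dependence of $A$ on $\gamma$ in $L^\infty$; this follows from the explicit form \eqref{eq:op_A} together with the mean value theorem and the uniform bounds of Proposition~\ref{prop:op_A}, but it should be stated cleanly since it is used for both coordinates.
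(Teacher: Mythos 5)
Your proposal is correct and follows essentially the same route as the paper's proof: subtract the two copies of \eqref{eq:t1_var} and \eqref{eq:t2_var}, estimate $\gtl_1-\gtl_2$ first via \eqref{eq:b1_infsup_a} (splitting into an argument-difference term and a coefficient-difference term bounded through the Lipschitz dependence of $A$ on $\gamma$), then bound $\ytl_1-\ytl_2$ via \eqref{eq:b2_infsup} in terms of the \emph{new} difference $\gtl_1-\gtl_2$, and finally assemble in the weighted norm \eqref{eq:W1_equiv_norm} so that \eqref{eq:v_contraction} yields the factor $1-\theta_2$. The two subtleties you flag --- the substitution order and the $L^\infty$ Lipschitz bound on $A$ via Proposition~\ref{prop:op_A} --- are exactly the points the paper's argument relies on.
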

\begin{proof} 
	Consider $(\gamma_1,y_1), (\gamma_2,y_2) \in \bbB_v$ such that  $\del{\gamma_1,y_1} \neq \del{\gamma_2,y_2}$.
	Using \eqref{eq:T_oper} we have that $T\del{\gamma_i,y_i} = \del{\gtl_i,\ytl_i}$ solves 
	\eqref{eq:t1_var} and \eqref{eq:t2_var} for $i=1,2$. 
	Therefore, combining \propref{prop:b_infsup} (\ref{item:b_infsup_i}) and \lemref{lem:t_maps_weps} with \eqref{eq:v_contraction} implies
	\begin{eqnarray}\label{eq:t_c_gamma}
			\normSZ{\gtl_1 - \gtl_2}{1}{\infty}{\I} 
				&&\leq \alpha \sup_{\normSZ{\zeta}{1}{1}{\I}=1} \BG\sbr{\gtl_1 - \gtl_2, \zeta}
					\nonumber \\ 
				&&= \alpha \sup_{\normSZ{\zeta}{1}{1}{\I}=1} 
					\BO\sbr{y_2-y_1,E\zeta;A\sbr{\gamma_1}} + \BO\sbr{y_2+v,E\zeta;A\sbr{\gamma_2}-A\sbr{\gamma_1}}
					\nonumber \\ 
				&&\leq \alpha C_EC_A\del{\normSZ{y_1-y_2}{1}{p}{\Omega} 
					+ \del{1+\beta C_A}\normSZ{v}1p\Omega\normSZ{\gamma_1-\gamma_2}{1}{\infty}{\I}}
					\nonumber \\ 
				&&=\alpha C_EC_A\norm{\del{\gamma_1-\gamma_2,y_1-y_2}}_{\bbW^1}.
	\end{eqnarray}	
         Similarly, \propref{prop:b_infsup} (\ref{item:b_infsup_ii}) in conjunction with \eqref{eq:t2_var} leads to
         \begin{eqnarray}\label{eq:t_c_y}
			\normSZ{\ytl_1 - \ytl_2}{1}{p}{\Omega}
				&&\leq \beta\sup_{\normSZ{z}{1}{q}{\Omega}=1} 
					\BO\sbr{\ytl_1 - \ytl_2,z;A\sbr{\gtl_1}} \nonumber \\
				&&= \beta\sup_{\normSZ{z}{1}{q}{\Omega}=1} 
					\BO\sbr{\ytl_2+v,z;A\sbr{\gtl_2}-A\sbr{\gtl_1}} \\
				&&\leq \beta C_A\normSZ{\ytl_2+v}1p\Omega \normSZ{\gtl_1 - \gtl_2}{1}{\infty}{\I}
					\nonumber \\ 
				&&\leq \beta C_A\del{1+\beta C_A }\normSZ{v}1p\Omega\normSZ{\gtl_1 - \gtl_2}{1}{\infty}{\I}. 
					\nonumber
          \end{eqnarray}
          Finally, \eqref{eq:t_c_gamma} and \eqref{eq:t_c_y} yield
          \begin{align*}
          	\norm{\del{\gtl_1, \ytl_1} - \del{\gtl_2,\ytl_2}}_{\bbW^1}
             		&\leq \del{1+\beta C_A}^2\normSZ{v}1p\Omega\normSZ{\gtl_1 - \gtl_2}1\infty{\I} \\
			&\leq \alpha C_E C_A\del{1+\beta C_A}^2\normSZ{v}1p\Omega\norm{\del{\gamma_1, y_1}-\del{\gamma_2,y_2}}_{\bbW^1} \\
			&\leq \del{1-\theta_2} \norm{\del{\gamma_1, y_1}-\del{\gamma_2,y_2}}_{\bbW^1},
          \end{align*}
          where the last inequality follows from \eqref{eq:v_contraction}. Since $\theta_2 \in \intoo{0,1}$, $T$ is a contraction with constant $1-\theta_2$ as asserted.
\end{proof}

We point out that the state constraint \eqref{eq:state_const} is used in the 
proof of \thmref{thm:t_contraction} at two distinct instances. The first is to 
estimate $A\sbr{\gamma_2}-A\sbr{\gamma_1}$ and $A\sbr{\gtl_2}-A\sbr{\gtl_1}$. 
The second use
is to invoke the inf-sup constant $\beta$ for $y$; see \eqref{eq:b2_infsup}.
For details on how $\beta$ depends on the state constraint we refer to \cite[Proposition 2.3]{PSaavedra_RScott_1991}.

\begin{cor}[well-posedness of state system] \label{cor:g_operator} 
For every $u\in \calU$, the open ball of \defnref{defn:const_set}, and $v$ satisfying \eqref{eq:v_invariant} and \eqref{eq:v_contraction}, there exists a unique solution 
$G_v(u) = \del{\gamma,y} \in \bbW^1$ to the state equations \eqref{eq:g_var_pde}. This further implies that 
$G_v:\calU\to\bbW^1$ is a well defined, one-to-one, nonlinear operator. 
\end{cor}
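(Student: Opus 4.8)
The plan is to assemble Corollary~\ref{cor:g_operator} directly from the Banach fixed point theorem applied to the map $T$. First I would verify the two hypotheses of the contraction mapping principle on the closed ball $\bbB_v$: that $T$ maps $\bbB_v$ into itself, and that $T$ is a contraction. Both are already available. Given $u \in \calU$, by Definition~\ref{defn:const_set} we have $\normLt{u}{\I} < \theta_1/\alpha$, so the smallness condition of \lemref{lem:t_maps_weps} is met, and that lemma guarantees $T(\bbB_v) \subset \bbB_v$ provided $v$ satisfies \eqref{eq:v_invariant}. Likewise, \thmref{thm:t_contraction} shows $T$ is a contraction with constant $1-\theta_2 < 1$ for all $u \in \calU$, provided $v$ satisfies \eqref{eq:v_contraction}. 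Since the corollary's hypotheses impose exactly these two conditions on $v$, both ingredients are in place.

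Next I would invoke the Banach fixed point theorem on the complete metric space $\bbB_v$ (a closed ball in the Banach space $\bbW^1$, hence complete under the equivalent norm \eqref{eq:W1_equiv_norm}). This yields a unique fixed point $(\gamma,y) \in \bbB_v$ with $T(\gamma,y) = (\gamma,y)$. The key observation is that a fixed point of $T$ is precisely a solution of the coupled weak system \eqref{eq:g_var_pde}: unwinding the definitions \eqref{eq:T_oper}, \eqref{eq:t1_var}, \eqref{eq:t2_var}, the identity $\gamma = T_1(\gamma,y)$ is exactly the second equation of \eqref{eq:g_var_pde}, while $y = T_2(\gamma,y)$, combined with $T_1(\gamma,y)=\gamma$, reproduces the first equation $\BO\sbr{y+v,z;A\sbr{\gamma}}=0$. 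Conversely, any solution of \eqref{eq:g_var_pde} lying in $\bbB_v$ is a fixed point, so uniqueness of the fixed point transfers to uniqueness of the solution within $\bbB_v$.

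I would then argue that uniqueness holds not merely within $\bbB_v$ but is the natural output of the construction: the contraction produces the unique fixed point, and this defines $G_v(u)=(\gamma,y)$ unambiguously. Letting $u$ range over $\calU$ shows $G_v:\calU\to\bbW^1$ is well defined. That $G_v$ is one-to-one follows because distinct controls $u_1\neq u_2$ force distinct right-hand sides in \eqref{eq:t1_var} through the pairing $\pairSob{u,\zeta}{\infty}{1}{1}{\I}$; using the inf-sup estimate \eqref{eq:b1_infsup_a} together with \eqref{eq:u_zeta_pair} one sees the curvature component $\gamma$ must differ, so the map cannot collapse two controls to the same state.

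The only genuinely delicate point, and the one I would flag as the main obstacle, is the compatibility of the standing conditions on $v$. The corollary simultaneously requires \eqref{eq:v_invariant} (with its parameter $\theta_1$ tied to the size of $\Uad$, or here $\calU$) and \eqref{eq:v_contraction} (with a second parameter $\theta_2$). One must confirm that the set of admissible $v$ satisfying both is nonempty, which amounts to checking that the two upper bounds on $\normSZ{v}1p\Omega$ can be met at once for a suitable choice of $\theta_1,\theta_2$ and any $u\in\calU$. Since \eqref{eq:v_contraction} carries the extra factor $(1+\beta C_A)$ in the denominator, it is the more restrictive of the two, so taking $v$ small enough to satisfy \eqref{eq:v_contraction} is the binding requirement; this is precisely the smallness assumption on $v$ inherited from \cite{PSaavedra_RScott_1991}, and I would simply note that it is assumed throughout. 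Apart from this bookkeeping, the proof is a routine application of Banach's theorem.
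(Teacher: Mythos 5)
Your proposal is correct and follows essentially the same route as the paper: fix $u \in \calU$, use Lemma~\ref{lem:t_maps_weps} and Theorem~\ref{thm:t_contraction} (under \eqref{eq:v_invariant} and \eqref{eq:v_contraction}) to verify the hypotheses of the Banach fixed point theorem on the closed convex set $\bbB_v$, and identify the unique fixed point of $T$ with the weak solution of \eqref{eq:g_var_pde}, which defines $G_v(u)$. Your additional remarks on injectivity and on the simultaneous satisfiability of the two smallness conditions on $v$ go beyond the paper's terser proof but are sound and harmless.
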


\begin{proof} Let $u \in \calU$ be fixed but arbitrary. It now follows that $T$ is a contraction in the closed convex set $\bbB_v$ (cf. Theorem~\ref{thm:t_contraction}) and applying the Banach fixed point theorem we obtain a unique $\del{\gamma,y} \in \bbB_v$ such that $T\del{\gamma, y} = \del{\gamma,y}$. In view of \eqref{eq:t1_var} and \eqref{eq:t2_var}, this is equivalent to saying that $\del{\gamma,y}$ is the weak solution to the FBP \eqref{eq:g_var_pde}, i.e. $G_v(u) = \del{\gamma,y}$.
\end{proof}

\subsubsection{Enhanced Regularity of $\gamma$}
\label{s:g_intepolation}
Corollary~\ref{cor:g_operator} implies  the existence and uniqueness of a solution $(\gamma,y)$ to \eqref{eq:lin_st_const} with first-order regularity, provided $u\in \calU$ and $v$ satisfies 
\eqref{eq:v_invariant} and \eqref{eq:v_contraction}. That is,  we only have one weak derivative for $\gamma$ and $y$.  
In the sequel we will show that the solution $\del{\gamma,y} = G_v(u)$ is slightly more regular \emph{without} any extra restrictions on $u$ or $v$. More specifically, we will show that 
	\begin{equation}\label{eq:gamma_enhanced_regularity}
		\gamma \in \sob{1+1/q}{p}{\I} \cap \sobZ1\infty{\I}. 
	\end{equation}
The importance of this result will be evident in \autoref{s:oc_rc_existence} where the existence of an optimal control is proven.  Despite its importance, the proof is rather simple. 

Let $\del{\gamma, y} \in \bbW^1$ be a weak solution to \eqref{eq:g_var_pde}. The function $\gamma$ satisfies
\[
	-\kappa \totalD^2_{x_1}\gamma = {-A\sbr{\gamma}\grad\del{y+v}\cdot\nu + u} =: f ,
\]
in the sense of distributions. 
If we assume, for the moment, that $f \in \sob{-1/p}{p}{\I}$, then
$\totalD^2_{x_1}\gamma \in \sob{-1/p}{p}{{\I}}$. This directly implies
$\gamma \in \sob{2-1/p}{p}{\I}$, i.e. $\gamma \in \sob{1+1/q}p{\I}$ as
asserted. It thus remains to show 
$A\sbr{\gamma}\grad\del{y+v}\cdot\nu\in\sob{-1/p}{p}{\I}$ because
$u \in \Ltwo{\I}$. 

Given $\phi \in \sobZ{1/p}{q}{\I}$, which we identify with 
$\sobZ{1/p}{q}{\Gamma}$, we extend $\phi$
by zero to $\Sigma$. We note that $\phi\in\sob{1/p}{q}{\partial\Omega}$
and that in fact $\sobZ{1/p}{q}{\I} = \sob{1/p}{q}{\I}$ for $q < 2$,
according to Lions-Magenes \cite[Th\'eor\`em 3.1]{JLLions_EMagenes_1961c}.
We can further view $\phi$ as the trace of the extension $E\phi$ to
$\Omega$ so
that $\normS{E\phi}1q\Omega = \normS{\phi}{1/p}{q}{\Gamma}$. With this in mind, 
\begin{align*}
	\pairSob{A\sbr{\gamma}\grad\del{y+v}\cdot\nu, \phi}{p}{1/p}{q}{\Gamma} 
		&= \int_\Omega A\sbr{\gamma}\grad\del{y+v}\cdot\grad{E\phi},
\end{align*}
whence
	\begin{align*}
		\normS{A\sbr{\gamma}\grad\del{y+v}\cdot\nu}{-1/p}p\Gamma \leq C_EC_A\del{1+\beta C_A}\normSZ{v}1p\Omega.
	\end{align*}

We collect this result in the next theorem.

\begin{thm}[enhanced regularity]\label{thm:g_interpolation} 
Let $u\in \calU$ and $v$ satisfy \eqref{eq:v_invariant} 
and \eqref{eq:v_contraction}. 
If $G_v(u) = \del{\gamma, y} \in \bbW^1 $ is given in
Corollary~\ref{cor:g_operator}, then $\gamma \in \sob{1+1/q}{p}{\I}
\cap \sobZ{1}{\infty}{\I}$.
\end{thm}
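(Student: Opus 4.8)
The plan is to show that $\gamma$ gains the asserted regularity by a bootstrapping argument applied to the one-dimensional ODE $-\kappa \totalD^2_{x_1}\gamma = f$ satisfied by the first component of $G_v(u)$. I would start from Corollary~\ref{cor:g_operator}, which gives $(\gamma,y)\in\bbW^1$ with $\gamma\in\sobZ1\infty{\I}$ and $y\in\sobZ1p\Omega$, and then interpret the interface equation in \eqref{eq:lin_st_const} distributionally as $\totalD^2_{x_1}\gamma = \kappa^{-1}\del{A\sbr\gamma\grad\del{y+v}\cdot\nu - u}$. Since $u\in\Ltwo{\I}\subset\sob{-1/p}p{\I}$ (because $p>2$, so $L^2$ embeds into the relevant negative-order space on the interval), the entire burden reduces to identifying the regularity of the normal-flux term $A\sbr\gamma\grad\del{y+v}\cdot\nu$ on $\Gamma\simeq\I$.

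The central step is to establish that $A\sbr\gamma\grad\del{y+v}\cdot\nu\in\sob{-1/p}{p}{\I}$, and here I would exploit duality against $\sobZ{1/p}q\I$. The key structural fact is the trace/extension identity: given $\phi\in\sobZ{1/p}q\I$, extended by zero across $\Sigma$, one has $\phi\in\sob{1/p}q{\bdy\Omega}$, and crucially $\sobZ{1/p}q\I = \sob{1/p}q\I$ for $q<2$ by Lions--Magenes \cite[Th\'eor\`em 3.1]{JLLions_EMagenes_1961c} — this is what makes the zero extension legitimate without losing regularity. Viewing $\phi$ as the trace of an $\sob1q\Omega$-extension $E\phi$ with $\normS{E\phi}1q\Omega\simeq\normS{\phi}{1/p}q\Gamma$, the weak form of the bulk equation lets me write the duality pairing of the flux with $\phi$ as the bulk integral $\int_\Omega A\sbr\gamma\grad\del{y+v}\cdot\grad E\phi$. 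Bounding this by \eqref{eq:A_unif_bound} and \eqref{eq:W1_convex_set} then yields $\normS{A\sbr\gamma\grad\del{y+v}\cdot\nu}{-1/p}p\Gamma\le C_EC_A\del{1+\beta C_A}\normSZ{v}1p\Omega$, so the flux indeed lies in $\sob{-1/p}p\I$.

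Once the right-hand side $f=-A\sbr\gamma\grad\del{y+v}\cdot\nu+u$ is shown to belong to $\sob{-1/p}p\I$, the conclusion is immediate: $\totalD^2_{x_1}\gamma\in\sob{-1/p}p\I$ forces $\gamma\in\sob{2-1/p}p\I=\sob{1+1/q}p\I$, and combining with the already-known $\gamma\in\sobZ1\infty\I$ gives \eqref{eq:gamma_enhanced_regularity}. I would emphasize that no new smallness constraints on $u$ or $v$ enter — the bounds used are exactly those already available from $\bbB_v$ — so the enhanced regularity is free, in contrast with the contraction argument of Theorem~\ref{thm:t_contraction}.

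The main obstacle, and the point requiring the most care, is the trace-space bookkeeping: justifying that extension by zero from $\I$ to $\bdy\Omega$ preserves membership in $\sob{1/p}q{\bdy\Omega}$ and that the resulting pairing genuinely equals the bulk integral. This rests entirely on the identity $\sobZ{1/p}q\I=\sob{1/p}q\I$ valid only for $q<2$ (equivalently $p>2$), which is precisely why the hypothesis $p>2$ cannot be relaxed here; if the fractional trace space with zero boundary values did not coincide with the full one, the zero extension across $\Sigma$ would introduce a spurious singularity at the corner points and the flux would fail to be dual to $\sobZ{1/p}q\I$. Everything else — the distributional reading of the ODE, the mapping-property bounds, and the final lift in the Sobolev scale — is routine given the results already proved.
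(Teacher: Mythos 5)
Your proposal follows essentially the same route as the paper's own proof: interpret the interface equation distributionally as $-\kappa\totalD^2_{x_1}\gamma = -A\sbr{\gamma}\grad\del{y+v}\cdot\nu + u$, reduce everything to showing the conormal flux lies in $\sob{-1/p}{p}{\I}$ by pairing against $\phi\in\sobZ{1/p}{q}{\I}$ extended by zero across $\Sigma$ (legitimized by the Lions--Magenes identity $\sobZ{1/p}{q}{\I}=\sob{1/p}{q}{\I}$ for $q<2$), rewrite the pairing as the bulk integral $\int_\Omega A\sbr{\gamma}\grad\del{y+v}\cdot\grad E\phi$ and bound it by $C_EC_A\del{1+\beta C_A}\normSZ{v}1p\Omega$, then lift to $\gamma\in\sob{2-1/p}{p}{\I}=\sob{1+1/q}{p}{\I}$. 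The argument, the key lemma, and even the emphasis that no additional smallness of $u$ or $v$ is needed all coincide with the paper's proof, so there is nothing further to add.
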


\subsection{$G_v$ is Lipschitz Continuous}
\label{s:g_lipschitz}
The first step to show that $G_v$ is twice Fr\'echet differentiable is to demonstrate that it is Lipschitz continuous.

In the interest of saving some space we will rewrite the variational system \eqref{eq:g_var_pde} in the following  form: find $\del{\gamma, y} \in \bbW^1$ such that for every $\del{\zeta, z} \in \sobZ11{\I} \times \sobZ1q\Omega$
	\begin{align} \label{eq:g_var_pde_one}
		\BG\sbr{\gamma, \zeta} + \BO\sbr{y+v, z + E\zeta; A\sbr{\gamma}} &= \pairSob{u,\zeta}{\infty}{1}{1}{\I}.
	\end{align} 
With this new notation in place we are ready to study the Lipschitz continuity of $G_v$.
\begin{thm}[Lipschitz continuity of $G_v$]  \label{thm:g_lipschitz} 
If $v$ fulfills  the conditions of Corollary~\ref{cor:g_operator}, then $G_v$ satisfies 
	\begin{align} \label{eq:g_lipschitz}
   		\norm{G_v(u_1) - G_v(u_2)}_{\bbW^1} \leq L_G\normLt{u_1-u_2}{\I} &&\forall u_1, u_2 \in \calU,
	\end{align}
with constant $L_G = \frac{\alpha}{\theta_2}\del{1+\beta C_A}^2\normSZ{v}1p\Omega$.   
\end{thm}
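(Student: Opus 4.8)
The plan is to use that $G_v(u_1)=(\gamma_1,y_1)$ and $G_v(u_2)=(\gamma_2,y_2)$ both solve the coupled system \eqref{eq:g_var_pde_one} and, by Corollary~\ref{cor:g_operator}, both lie in $\bbB_v$, so the a priori bounds $\normSZ{\gamma_i}1\infty{\I}\le 1$ and $\normSZ{y_i}1p\Omega\le\beta C_A\normSZ{v}1p\Omega$ (hence $\normSZ{y_i+v}1p\Omega\le(1+\beta C_A)\normSZ{v}1p\Omega$) are available. I would subtract the two copies of \eqref{eq:g_var_pde_one}, treating the $\gamma$-component (test $\zeta$, $z=0$) and the $y$-component (test $z$, $\zeta=0$) separately, exactly as in the proof of Theorem~\ref{thm:t_contraction}. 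The only structural difference is the extra forcing $\pairSob{u_1-u_2,\zeta}{\infty}{1}{1}{\I}$, which by \eqref{eq:u_negative_to_l2} is bounded by $\normLt{u_1-u_2}{\I}\,\normSZ{\zeta}11{\I}$.

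First I would handle $y$. Testing the difference of the first equations with $z\in\sobZ1q\Omega$ gives $\BO\sbr{y_1-y_2,z;A\sbr{\gamma_1}}=-\BO\sbr{y_2+v,z;A\sbr{\gamma_1}-A\sbr{\gamma_2}}$. Invoking the inf-sup bound \eqref{eq:b2_infsup}, the Lipschitz estimate $\norm{A\sbr{\gamma_1}-A\sbr{\gamma_2}}_{\Linf{\Omega}^{2\times2}}\le C_A\normSZ{\gamma_1-\gamma_2}1\infty{\I}$ coming from \eqref{eq:A_unif_bound}, and the bound on $\normSZ{y_2+v}1p\Omega$, yields the key relation $(\star)$: $\normSZ{y_1-y_2}1p\Omega\le\beta C_A(1+\beta C_A)\normSZ{v}1p\Omega\,\normSZ{\gamma_1-\gamma_2}1\infty{\I}$, i.e.\ the $y$-difference is slaved to the $\gamma$-difference.

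Next I would handle $\gamma$. Testing the difference of the second equations with $\zeta$ and rearranging gives $\BG\sbr{\gamma_1-\gamma_2,\zeta}=\pairSob{u_1-u_2,\zeta}{\infty}{1}{1}{\I}-\BO\sbr{y_1-y_2,E\zeta;A\sbr{\gamma_1}}-\BO\sbr{y_2+v,E\zeta;A\sbr{\gamma_1}-A\sbr{\gamma_2}}$. Applying the inf-sup bound \eqref{eq:b1_infsup_a}, the extension bound \eqref{eq:w1_wq_ext}, and \eqref{eq:u_negative_to_l2}, and then substituting $(\star)$ to eliminate $\normSZ{y_1-y_2}1p\Omega$, the two $\BO$-terms collapse into a single term with coefficient $\alpha C_EC_A(1+\beta C_A)^2\normSZ{v}1p\Omega$ multiplying $\normSZ{\gamma_1-\gamma_2}1\infty{\I}$. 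This is the crux: by \eqref{eq:v_contraction} that coefficient is $\le 1-\theta_2$, so it may be absorbed into the left-hand side, leaving $\theta_2\normSZ{\gamma_1-\gamma_2}1\infty{\I}\le\alpha\normLt{u_1-u_2}{\I}$, hence $\normSZ{\gamma_1-\gamma_2}1\infty{\I}\le\frac{\alpha}{\theta_2}\normLt{u_1-u_2}{\I}$.

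Finally, I would assemble the $\bbW^1$-norm \eqref{eq:W1_equiv_norm} of $G_v(u_1)-G_v(u_2)$. Using $(\star)$ once more, the $y$-part contributes $\beta C_A(1+\beta C_A)\normSZ{v}1p\Omega\,\normSZ{\gamma_1-\gamma_2}1\infty{\I}$ and the $\gamma$-part contributes $(1+\beta C_A)\normSZ{v}1p\Omega\,\normSZ{\gamma_1-\gamma_2}1\infty{\I}$; together these give $(1+\beta C_A)^2\normSZ{v}1p\Omega\,\normSZ{\gamma_1-\gamma_2}1\infty{\I}$, and inserting the bound for $\normSZ{\gamma_1-\gamma_2}1\infty{\I}$ produces exactly $L_G=\frac{\alpha}{\theta_2}(1+\beta C_A)^2\normSZ{v}1p\Omega$. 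I expect the only real obstacle to be the bookkeeping: matching the powers of $(1+\beta C_A)$ and the $\normSZ{v}1p\Omega$ factors so that the absorbed coefficient lines up precisely with \eqref{eq:v_contraction}, which is what yields the $1/\theta_2$. As a cleaner alternative avoiding these re-derivations, one may note that $G_v(u_i)$ is the fixed point of the $u_i$-dependent contraction $T$ of Theorem~\ref{thm:t_contraction}; the standard perturbation bound $\theta_2\norm{G_v(u_1)-G_v(u_2)}_{\bbW^1}\le\norm{T_{u_1}(G_v(u_2))-T_{u_2}(G_v(u_2))}_{\bbW^1}$ reduces everything to estimating $T$ at a single point, where only the $u$-dependent forcing in \eqref{eq:t1_var} survives and is controlled directly through \eqref{eq:b1_infsup_a} and \eqref{eq:u_negative_to_l2}.
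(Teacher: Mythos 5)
Your proposal is correct and follows essentially the same route as the paper's proof: subtract the two copies of \eqref{eq:g_var_pde_one}, slave $\normSZ{y_1-y_2}1p\Omega$ to $\normSZ{\gamma_1-\gamma_2}1\infty{\I}$ via \eqref{eq:b2_infsup} (the paper's \eqref{eq:g_lipschitz_y}), absorb the coefficient $\alpha C_EC_A\del{1+\beta C_A}^2\normSZ{v}1p\Omega$ using \eqref{eq:v_contraction} to get \eqref{eq:g_lipschitz_gamma}, and assemble with the weighted norm \eqref{eq:W1_equiv_norm}. The fixed-point perturbation argument you sketch at the end is also viable and cleaner, with the minor caveat that not only the forcing survives when comparing $T_{u_1}$ and $T_{u_2}$ at a single point: the $T_2$-components also differ because $T_2$ depends on $u$ through $A\sbr{T_1\del{\cdot,\cdot}}$, although that difference is in turn controlled by the $T_1$-difference, so the same constant $L_G$ emerges.
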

\begin{proof} 
Given $u_1, u_2 \in \calU$, set $ \del{\gamma_1,y_1} - \del{\gamma_2, y_2} = G_v(u_1) - G_v(u_2)$. Using 
\eqref{eq:g_var_pde_one}, we have for every $\del{\zeta,z} \in \sobZ 1 1 {\I} \times \sobZ{1}{q}{\Omega}$
	\begin{align*}
	    	\BG\sbr{\gamma_1-\gamma_2,\zeta} &+ \BO\sbr{y_1+v,z + E\zeta;A\sbr{\gamma_1}} \\
	    			&- \BO\sbr{y_2+v,z+E\zeta;A\sbr{\gamma_2}} = \pairLt{u_1 - u_2,\zeta}{\I}.
	\end{align*}
	Subtracting $\BO\sbr{y_2+v,z+E\zeta;A\sbr{\gamma_1}}$ from both sides and rearranging terms yields
	\begin{align*}
	    \BG\sbr{\gamma_1-\gamma_2,\zeta} &+ \BO\sbr{y_1-y_2,z+E\zeta;A\sbr{\gamma_1}}  \\ 
	    	&= \BO\sbr{y_2+v,z+E\zeta;A\sbr{\gamma_2}-A\sbr{\gamma_1}} +  \pairLt{u_1 - u_2,\zeta}{{\I}}. 
	\end{align*}
	The $\inf$-$\sup$ estimates from \propref{prop:b_infsup}, together with $\del{\gamma_i, y_i} \in \bbB_v$ for $i=1,2$, imply for $\zeta = 0$
	\begin{align}\label{eq:g_lipschitz_y}
		\normSZ{y_1-y_2}{1}{p}{\Omega} 
			&\leq \beta C_A\normSZ{y_2+v}{1}{p}{\Omega} \normSZ{\gamma_1-\gamma_2}{1}{\infty}{{\I}} \nonumber \\ 
			&\leq \beta C_A\del{1+\beta C_A }\normSZ{v}1p\Omega\normSZ{\gamma_1-\gamma_2}{1}{\infty}{\I} ,
	\end{align}
	and for $z=0$
	\begin{align*}
		\normSZ{\gamma_1 - \gamma_2}{1}{\infty}{\I}
			&\leq \alpha C_EC_A\del{\normSZ{y_1-y_2}{1}{p}{\Omega}
				+ \normSZ{y_2+v}{1}{p}{\Omega}\normSZ{\gamma_1 - \gamma_2}{1}{\infty}{\I}}  \\
			&\quad + \alpha \normLt{u_1 - u_2}{\I}  \\ 
			&\leq \alpha C_EC_A\del{1+\beta C_A}^2\normSZ{v}1p\Omega \normSZ{\gamma_1 - \gamma_2}1\infty{\I} + 
			 	\alpha\normLt{u_1-u_2}{\I}.
	\end{align*}
	Finally, in view of \eqref{eq:v_contraction}, we infer that
	\begin{align} \label{eq:g_lipschitz_gamma}	
			\normSZ{\gamma_1 - \gamma_2}1\infty{\I} 
				\leq  \frac{\alpha}{\theta_2}\normLt{u_1 - u_2}{\I}.
	\end{align}
	The asserted estimate follows immediately from the definition of $\norm{\cdot}_{\bbW^1}$ in  \eqref{eq:W1_equiv_norm}.
\end{proof}
\subsection{$G_v$ is Fr{\'e}chet Differentiable}
\label{s:g_differentiable}
The next step towards showing the twice Fr{\'e}chet differentiability of $G_v$ entails analyzing
the well-posedness of the \emph{linear} variational system: find $\del{\gamma,y} \in \bbW^1$ 
such that for every $\del{\zeta,z} \in \sobZ11{\I} \times \sobZ1q\Omega$ 
	\begin{equation}
		\BG\sbr{\gamma, \zeta} + \DO\sbr{\del{\gamma, y},z + E\zeta; \gop, \yop}
			= F_\Omega\del{z+E\zeta} +  F_\Gamma\del{\zeta}
	\label{eq:g_diff_general}\end{equation}
where 
	$$
	\DO\sbr{\del{\gamma, y}, \cdot; \gop, \yop} := \BO\sbr{y,\cdot;A\sbr{\gop}} + \BO\sbr{\yop + v,\cdot;\Dif A\sbr\gop\difdir{\gamma}},$$ 
$\del{\gop,\yop} = G_v(\uop) \in \bbB_v$ for a fixed $\uop$ in $\calU$, $\Dif A\sbr\gop\difdir\gamma$ is given in \eqref{eq:op_DA}, and
$F_\Omega \in \sob{1}{q}{\Omega}^*$ and $F_\Gamma \in \sobZ{1}1{\I}^*$ are fixed but arbitrary.

\subsubsection{Preliminary Estimates}
\label{sec:G_prelim}
Given that the coupled system \eqref{eq:g_diff_general} is linear, one
would be inclined to use the standard Banach-Ne\v{c}as theorem to prove its
well-posedness directly. We deviate from this approach and resort to the machinery
already put in place.

Consider the operator $T:\bbW^1 \to \bbW^1$ given by
	\begin{equation}\label{eq:T_diff_oper}
		T\del{\gamma, y} := \del{T_1(\gamma,y), T_2(\gamma, y)} = \del{\ghat, \yhat}
			\quad \forall \del{\gamma, y} \in \bbW^1,
	\end{equation}
where $\ghat = T_1\del{\gamma, y} \in \sobZ1\infty{\I}$ satisfies for every $\zeta \in \sobZ{1}{1}{{\I}}$
	\begin{equation}\label{eq:t1_diff_var}
		\BG\sbr{\ghat,\zeta} = 
			- \DO\sbr{\del{\gamma, y}, E\zeta; \gop,\yop}
			 + F_\Omega\del{E\zeta} + F_\Gamma\del\zeta,
	\end{equation}
and $\yhat = T_2\del{\gamma,y} \in \sobZ1p\Omega$ satisfies for every $z \in \sobZ{1}{q}{\Omega}$
	\begin{align}\label{eq:t2_diff_var}
		\BO\sbr{\yhat, z; A\sbr{\gop}} = 
			- \BO\sbr{\yop + v, z; \Dif A\sbr\gop\difdir{T_1(\gamma, y)}} + 
			F_\Omega\del z.
	\end{align}
We point out that any fixed point of $T$ is also a solution to \eqref{eq:g_diff_general}. To infer the existence of a fixed point we exploit the linear structure of \eqref{eq:g_diff_general}. Therefore, it suffices to show the well-posedness of the intermediate operators $T_1$ and $T_2$, and to show that $T$ is a contraction in $\bbW^1$. 
\begin{lem}[well-posedness of $T_1$ and $T_2$] \label{lem:t_diff_maps_back}
Let $T_1$, $T_2$ be the operators defined in \eqref{eq:t1_diff_var} and \eqref{eq:t2_diff_var} with
$\del{\gop,\yop} \in \bbB_v$. The following holds
\begin{enumerate}[(i)]
	\item\label{item:t_diff_maps_back_b} for every $\del{\gamma,y} \in \bbW^1$, there exists a unique $\ghat = T_1\del{\gamma,y}$ satisfying \eqref{eq:t1_diff_var} and   
         \[
              \normSZ{\ghat}{1}{\infty}{{\I}} 
              	\leq \alpha\del{C_EC_A\norm{\del{\gamma,y}}_{\bbW^1}
			+ C_E\normSD{F_\Omega}{1}q\Omega
              		+ \normSZD{F_\Gamma}{1}{1}{{\I}}},
         \]
	\item\label{item:t_diff_maps_back_a} for every $\del{\gamma, y} \in \bbW^1$, there exists a unique $\yhat = T_2(\gamma, y)$ satisfying 
               \eqref{eq:t2_diff_var} and  
         \[ 
            \normSZ{\yhat}{1}{p}{\Omega} \leq
            	\beta C_A \del{1+\beta C_A}\normSZ{v}1p\Omega\normSZ{\ghat}{1}{\infty}{{\I}} 
	 				+\beta \normSD{F_\Omega}1q\Omega.
         \]   
\end{enumerate}
\end{lem}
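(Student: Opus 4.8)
The plan is to treat each part as a linear solve driven by a bounded functional and to quote the abstract well-posedness results already in place rather than reprove them. For part~(\ref{item:t_diff_maps_back_b}) the engine is Lemma~\ref{lem:baneso} applied to $\BG$, and for part~(\ref{item:t_diff_maps_back_a}) it is the inf-sup bound \eqref{eq:b2_infsup} of Proposition~\ref{prop:b_infsup}(\ref{item:b_infsup_ii}) combined with the Banach-Ne\v{c}as theorem for reflexive Banach spaces on the pair $\sobZ1p\Omega\times\sobZ1q\Omega$, exactly as in \lemref{lem:t_maps_weps}. Throughout I would use that $\del{\gop,\yop}\in\bbB_v$ forces $\normSZ\gop1\infty{\I}\le1$, so the state constraint \eqref{eq:state_const} holds for $\gop$; this is what makes the uniform bounds on $A$ and $\Dif A$ from Proposition~\ref{prop:op_A} and the inf-sup constant $\beta$ available.

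For (\ref{item:t_diff_maps_back_b}) I would set
\[
	F(\zeta):=-\DO\sbr{\del{\gamma,y},E\zeta;\gop,\yop}+F_\Omega\del{E\zeta}+F_\Gamma\del\zeta .
\]
The first step is to verify $F\in\sobZ11{\I}^*$. Bounding the two $\BO$-pieces of $\DO$ through \eqref{eq:A_unif_bound}, the extension estimate \eqref{eq:w1_wq_ext}, and $\normSZ{\yop+v}1p\Omega\le\del{1+\beta C_A}\normSZ v1p\Omega$ produces the factor
\[
	C_EC_A\del{\normSZ y1p\Omega+\del{1+\beta C_A}\normSZ v1p\Omega\,\normSZ\gamma1\infty{\I}}\normSZ\zeta11{\I},
\]
while the forcing terms contribute $C_E\normSD{F_\Omega}1q\Omega+\normSZD{F_\Gamma}11{\I}$ times $\normSZ\zeta11{\I}$. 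The one thing to notice is that the bracketed quantity equals $\norm{\del{\gamma,y}}_{\bbW^1}$ by the definition \eqref{eq:W1_equiv_norm}, so $\normSD F11{\I}$ is dominated by the parenthesised expression in the claim. Since $\BG$ is continuous, coercive on $\sobZ12{\I}$ and satisfies \eqref{eq:b1_infsup_a}, Lemma~\ref{lem:baneso} then returns a unique $\ghat\in\sobZ1\infty{\I}$ solving \eqref{eq:t1_diff_var} together with $\normSZ\ghat1\infty{\I}\le\alpha\normSD F11{\I}$, which is the stated bound.

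For (\ref{item:t_diff_maps_back_a}), with $\ghat=T_1(\gamma,y)$ held fixed, I would set
\[
	G(z):=-\BO\sbr{\yop+v,z;\Dif A\sbr\gop\difdir{\ghat}}+F_\Omega\del z
\]
and again check $G\in\sobZ1q\Omega^*$. Using $\norm{\Dif A\sbr\gop\difdir{\ghat}}_{L^\infty(\Omega)^{2\times2}}\le C_A\normSZ\ghat1\infty{\I}$ from \eqref{eq:A_unif_bound} and $\normSZ{\yop+v}1p\Omega\le\del{1+\beta C_A}\normSZ v1p\Omega$ gives $\normSD G1q\Omega\le C_A\del{1+\beta C_A}\normSZ v1p\Omega\,\normSZ\ghat1\infty{\I}+\normSD{F_\Omega}1q\Omega$. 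Because \eqref{eq:state_const} holds for $\gop$, Proposition~\ref{prop:b_infsup}(\ref{item:b_infsup_ii}) provides the inf-sup constant $\beta$ for $\BO\sbr{\cdot,\cdot;A\sbr\gop}$, and the Banach-Ne\v{c}as theorem yields a unique $\yhat\in\sobZ1p\Omega$ solving \eqref{eq:t2_diff_var} with $\normSZ\yhat1p\Omega\le\beta\normSD G1q\Omega$; substituting the bound on $\normSD G1q\Omega$ reproduces the claimed inequality.

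I expect no genuine obstacle here, since the content is a direct assembly of pieces already built, but the subtlety worth flagging is the bookkeeping in part~(\ref{item:t_diff_maps_back_b}): the two separate contributions coming from $\BO\sbr{y,\cdot;A\sbr\gop}$ and $\BO\sbr{\yop+v,\cdot;\Dif A\sbr\gop\difdir{\gamma}}$ must recombine into the single weighted norm $\norm{\del{\gamma,y}}_{\bbW^1}$, which is precisely why the equivalent norm \eqref{eq:W1_equiv_norm} carries the weight $\del{1+\beta C_A}\normSZ v1p\Omega$ in front of $\normSZ\gamma1\infty{\I}$. It is also worth noting that, unlike \lemref{lem:t_maps_weps}, no smallness assumption on $v$ or $u$ enters: the linearity of \eqref{eq:t1_diff_var}--\eqref{eq:t2_diff_var} means the estimates hold for arbitrary data $F_\Omega$ and $F_\Gamma$.
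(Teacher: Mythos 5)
Your proposal is correct and follows essentially the same route as the paper: part~(i) is handled by verifying that the right-hand side of \eqref{eq:t1_diff_var} defines a functional in $\sobZ11{\I}^*$ whose norm is exactly $C_EC_A\norm{\del{\gamma,y}}_{\bbW^1}+C_E\normSD{F_\Omega}1q\Omega+\normSZD{F_\Gamma}11{\I}$ and then invoking Lemma~\ref{lem:baneso} with the coercivity of $\BG$ on $\sobZ12{\I}$, while part~(ii) is the direct application of Proposition~\ref{prop:b_infsup}(\ref{item:b_infsup_ii}) with the Banach--Ne\v{c}as theorem, which the paper leaves as ``straightforward'' and you simply spell out. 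Your added observations (the recombination into the weighted $\bbW^1$-norm, and the absence of any smallness condition on $v$ or $u$ due to linearity) are accurate and consistent with the paper.
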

\begin{proof} 
To prove (\ref{item:t_diff_maps_back_b}) we proceed as in \lemref{lem:t_maps_weps}.
It suffices to check that the right-hand-side $\env{RHS}(\zeta)$ of \eqref{eq:t1_diff_var} is in $\sobZ11{\I}^*$, namely
	\begin{align*}
		\abs{\env{RHS}(\zeta)} \leq 
			\bigg(C_EC_A\Big(\normSZ{y}{1}{p}{\Omega} 
				&+ \del{1+\beta C_A}\normSZ{v}1p\Omega\normSZ{\gamma}1\infty{\I}\Big) 
			    \\
				&+ C_E\normSD{F_\Omega}1q\Omega 
				+ \normSZD{F_\Gamma}11{\I} 
			 \bigg)\normSZ{\zeta}{1}{1}{{\I}} .
	\end{align*}
The desired estimate (i) follows from \lemref{lem:baneso} with the coercivity of $\BG$ in $\sobZ{1}{2}{{\I}}$,  
and the definition of $\norm\cdot_{\bbW^1}$ in \eqref{eq:W1_equiv_norm}.
The remaining estimate (\ref{item:t_diff_maps_back_a}) is a straightforward
application of Proposition \ref{prop:b_infsup}(ii).
\end{proof}

\begin{thm}[T is a contraction] \label{thm:t_contraction2}  Let \eqref{eq:v_contraction} hold for some $\theta_2 \in \intoo{0,1}$. The operator $T$ defined in \eqref{eq:T_diff_oper} is a contraction in $\bbW^1$ with constant $1-\theta_2$.
\end{thm}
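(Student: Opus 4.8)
The plan is to show that the operator $T$ defined in \eqref{eq:T_diff_oper} is a contraction on $\bbW^1$ by estimating the difference $T\del{\gamma_1,y_1} - T\del{\gamma_2,y_2}$ for two arbitrary points $\del{\gamma_1,y_1},\del{\gamma_2,y_2} \in \bbW^1$. The key observation is that the map $T$ here is \emph{affine}: the right-hand sides $F_\Omega$, $F_\Gamma$ and the base point $\del{\gop,\yop}$ are all fixed, so upon taking differences these inhomogeneous terms cancel entirely, leaving only the homogeneous coupling between $\ghat_1-\ghat_2$ and $\yhat_1-\yhat_2$. This mirrors exactly the structure exploited in \thmref{thm:t_contraction}, and I would run the argument in the same two-step fashion.

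First I would estimate $\normSZ{\ghat_1 - \ghat_2}1\infty{\I}$. Subtracting the two instances of \eqref{eq:t1_diff_var} and using the $\inf$-$\sup$ bound \eqref{eq:b1_infsup_a} from \propref{prop:b_infsup}(\ref{item:b_infsup_i}), the forcing terms $F_\Omega, F_\Gamma$ disappear and only the bilinear difference $\DO\sbr{\del{\gamma_1-\gamma_2,y_1-y_2}, E\zeta;\gop,\yop}$ survives. Expanding $\DO$ via its definition and invoking the uniform bound $C_A$ from \eqref{eq:A_unif_bound}, the extension bound $C_E$ from \eqref{eq:w1_wq_ext}, and $\del{\gop,\yop}\in\bbB_v$, I would obtain
\begin{align*}
	\normSZ{\ghat_1 - \ghat_2}1\infty{\I}
		\leq \alpha C_E C_A \del{ \normSZ{y_1-y_2}1p\Omega
			+ \del{1+\beta C_A}\normSZ{v}1p\Omega \normSZ{\gamma_1-\gamma_2}1\infty{\I}}
		= \alpha C_E C_A \norm{\del{\gamma_1-\gamma_2,y_1-y_2}}_{\bbW^1},
\end{align*}
precisely as in \eqref{eq:t_c_gamma}. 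Next I would estimate $\normSZ{\yhat_1 - \yhat_2}1p\Omega$ by subtracting the two instances of \eqref{eq:t2_diff_var} and applying \propref{prop:b_infsup}(\ref{item:b_infsup_ii}); here the difference depends on $\Dif A\sbr\gop\difdir{\ghat_1 - \ghat_2}$, so using \eqref{eq:A_unif_bound} together with $\normSZ{\yop+v}1p\Omega \leq \del{1+\beta C_A}\normSZ{v}1p\Omega$ yields the analogue of \eqref{eq:t_c_y}, namely $\normSZ{\yhat_1-\yhat_2}1p\Omega \leq \beta C_A\del{1+\beta C_A}\normSZ{v}1p\Omega\normSZ{\ghat_1-\ghat_2}1\infty{\I}$.

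Finally I would combine the two estimates exactly as in the final display of the proof of \thmref{thm:t_contraction}: chaining them through the definition \eqref{eq:W1_equiv_norm} of $\norm{\cdot}_{\bbW^1}$ produces the factor $\alpha C_E C_A\del{1+\beta C_A}^2\normSZ{v}1p\Omega$, which is bounded by $1-\theta_2$ by the smallness hypothesis \eqref{eq:v_contraction}. Since $\theta_2\in\intoo{0,1}$, $T$ is a contraction with constant $1-\theta_2$. I do not anticipate a genuine obstacle here, as the linearity of \eqref{eq:g_diff_general} makes this strictly easier than the nonlinear contraction of \thmref{thm:t_contraction}; the only point requiring mild care is verifying that the base-point data $\del{\gop,\yop}$ and the fixed functionals $F_\Omega,F_\Gamma$ cancel cleanly under subtraction, which is immediate from the affine structure, and confirming that the bound $\normSZ{\yop+v}1p\Omega\le\del{1+\beta C_A}\normSZ{v}1p\Omega$ carries over from $\del{\gop,\yop}\in\bbB_v$.
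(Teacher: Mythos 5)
Your proposal is correct and takes essentially the same route as the paper: the paper merely packages your two difference estimates as applications of Lemma~\ref{lem:t_diff_maps_back}(i)--(ii) to the difference (whose forcing terms $F_\Omega, F_\Gamma$ vanish by the affine structure you identify), then chains them through \eqref{eq:W1_equiv_norm} and \eqref{eq:v_contraction} exactly as you do.
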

\begin{proof} We proceed in a similar fashion to \thmref{thm:t_contraction}. 
Consider not identical $\del{\gamma_1, y_1}$ and $\del{\gamma_2, y_2}$ in $\bbW^1$, and use \eqref{eq:T_diff_oper} to write $\del{\ghat_i, \yhat_i} = T\del{\gamma_i, y_i}$ for $i=1,2$. Applying \lemref{lem:t_diff_maps_back} (\ref{item:t_diff_maps_back_b}), we obtain 
	\begin{align*}
		\normSZ{\ghat_1 - \ghat_2}1\infty{\I} \leq
			\alpha C_E C_A\norm{\del{\gamma_1 - \gamma_2, y_1-y_2}}_{\bbW^1}.
	\end{align*}
Similarly,  \lemref{lem:t_diff_maps_back} (\ref{item:t_diff_maps_back_a}) implies
	\begin{align*}
		\normSZ{\yhat_1 - \yhat_2}{1}{p}{\Omega} 
			&\leq \beta C_A\del{1+\beta C_A}\normSZ{v}1p\Omega\normSZ{\ghat_1 - \ghat_2}{1}{\infty}{{\I}}.
	\end{align*}
Lastly, the upper bound \eqref{eq:v_contraction} on $v$ yields
	\begin{align*}
		\norm{\del{\ghat_1 - \ghat_2, \yhat_1 - \yhat_2}}_{\bbW^1}
			&\leq \del{1+\beta C_A}^2\normSZ{v}1p\Omega\normSZ{\ghat_1-\ghat_2}1\infty{\I} \\			
			&\leq \del{1-\theta_2}\norm{\del{\gamma_1-\gamma_2, y_1 - y_2}}_{\bbW^1}.
	\end{align*}
Hence, $T$ is a contraction with constant $1-\theta_2$, as asserted.
\end{proof}
\begin{cor}[well-posedness of the linear system \eqref{eq:g_diff_general}] \label{cor:g_diff_prelim} Under the assumptions of \thmref{thm:t_contraction2}, there exists a unique solution $\del{\gamma,y} \in \bbW^1$ to the variational equation \eqref{eq:g_diff_general} and the following estimates hold
	\begin{align} 
		\label{eq:g_diff_prelim_continuity_gamma}
		\normSZ{\gamma}1\infty{\I} 
			&\leq \frac{\alpha}{\theta_2}\del{C_E\del{1+\beta C_A}\normSD{F_\Omega}1q\Omega + \normSZD{F_\Gamma}11{\I}} \\
		\label{eq:g_diff_prelim_continuity_y}
		\normSZ{y}1p\Omega
			&\leq
                        \frac{\beta}{\theta_2}\del{\normSD{F_\Omega}1q\Omega
                          + \alpha C_A\del{1+\beta
                            C_A}\normSZ{v}1p\Omega
                          \normSZD{F_\Gamma}11{\I}}.
	\end{align}
Therefore
\begin{equation*}
\begin{aligned}
		\norm{\del{\gamma,y}}_{\bbW^1}
			&\leq 
			\frac{1}{\theta_2}
				\del{\alpha C_E\del{1+\beta
                                    C_A}^2\normSZ{v}1p\Omega +
                                  \beta}\normSD{F_\Omega}1q\Omega  
                        \\
			&\quad
				+ \frac{\alpha}{\theta_2}\del{1+\beta
                                  C_A}^2\normSZ{v}1p\Omega\normSZD{F_\Gamma}11{\I}.
\end{aligned}
\end{equation*}
\end{cor}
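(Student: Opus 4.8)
The plan is to obtain existence, uniqueness, and the quantitative bounds as immediate consequences of the contraction machinery just established in Theorem~\ref{thm:t_contraction2}. Since $T$ defined in \eqref{eq:T_diff_oper} is a contraction on the Banach space $\bbW^1$ with constant $1-\theta_2<1$, the Banach fixed point theorem yields a unique fixed point $\del{\gamma,y}\in\bbW^1$. By the remark following \eqref{eq:t2_diff_var}, any fixed point of $T$ solves the linear variational system \eqref{eq:g_diff_general}, and conversely; hence $\del{\gamma,y}$ is the unique solution to \eqref{eq:g_diff_general}. This disposes of the qualitative part with essentially no work beyond invoking the previous theorem.

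The quantitative estimates are where the actual computation lies, and I would derive them directly from the fixed-point relations $\gamma=T_1\del{\gamma,y}$ and $y=T_2\del{\gamma,y}$ using the a~priori bounds in Lemma~\ref{lem:t_diff_maps_back}. Applying Lemma~\ref{lem:t_diff_maps_back}(\ref{item:t_diff_maps_back_b}) at the fixed point gives
\begin{equation*}
	\normSZ{\gamma}1\infty{\I}\leq\alpha\del{C_EC_A\norm{\del{\gamma,y}}_{\bbW^1}+C_E\normSD{F_\Omega}1q\Omega+\normSZD{F_\Gamma}11{\I}},
\end{equation*}
while Lemma~\ref{lem:t_diff_maps_back}(\ref{item:t_diff_maps_back_a}) gives the analogous bound for $\normSZ{y}1p\Omega$ in terms of $\normSZ{\gamma}1\infty{\I}$ and $\normSD{F_\Omega}1q\Omega$. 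The key structural observation, already exploited in the contraction proof, is that the norm \eqref{eq:W1_equiv_norm} is engineered so that $\norm{\del{\gamma,y}}_{\bbW^1}$ collapses into a multiple of $\normSZ{\gamma}1\infty{\I}$ via the $y$-estimate; concretely, feeding the $\normSZ{y}1p\Omega$ bound into the definition of the $\bbW^1$-norm and using \eqref{eq:v_contraction} produces the factor $\del{1-\theta_2}\norm{\del{\gamma,y}}_{\bbW^1}$ on the right-hand side.

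The main obstacle, and the only genuinely delicate point, is the \emph{absorption} step: the bound on $\normSZ{\gamma}1\infty{\I}$ contains $\norm{\del{\gamma,y}}_{\bbW^1}$ on its right-hand side, so I must move that term to the left and divide by $\theta_2$ without losing track of constants. The arithmetic is the same self-referential estimate that drove Theorem~\ref{thm:t_contraction2}, now applied with the forcing data $F_\Omega,F_\Gamma$ retained rather than differenced away. Carefully tracking which terms carry $\normSD{F_\Omega}1q\Omega$ versus $\normSZD{F_\Gamma}11{\I}$, and invoking \eqref{eq:v_contraction} to bound the coefficient of the absorbed $\bbW^1$-norm by $1-\theta_2$, yields \eqref{eq:g_diff_prelim_continuity_gamma}; substituting back into the $y$-estimate gives \eqref{eq:g_diff_prelim_continuity_y}; and adding the two according to \eqref{eq:W1_equiv_norm} produces the final combined bound. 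No new analytic ideas are required beyond what appears in the contraction argument.
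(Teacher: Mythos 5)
Your proposal is correct and takes essentially the same route as the paper: existence and uniqueness follow from the Banach fixed point theorem via Theorem~\ref{thm:t_contraction2}, and the estimates come from applying Lemma~\ref{lem:t_diff_maps_back} at the fixed point, substituting the $y$-bound into the $\gamma$-bound, and absorbing the self-referential term using \eqref{eq:v_contraction}. The only cosmetic difference is that the paper absorbs $\normSZ{\gamma}1\infty{\I}$ rather than the full $\norm{\cdot}_{\bbW^1}$-norm, but the arithmetic is identical and both yield \eqref{eq:g_diff_prelim_continuity_gamma}, then \eqref{eq:g_diff_prelim_continuity_y}, and the combined bound.
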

\begin{proof}  Existence and uniqueness follows from
  \thmref{thm:t_contraction2}.  As far as the estimates go,  we will only derive \eqref{eq:g_diff_prelim_continuity_gamma} since the other two are mere consequences. 

To this end we apply \lemref{lem:t_diff_maps_back} and the upper bound \eqref{eq:v_contraction} for $v$ to get
	\begin{align*}
		\normSZ{\gamma}1\infty{\I} 
			&\leq \alpha C_E C_A\del{\del{1+\beta C_A}\normSZ{v}1p\Omega\normSZ{\gamma}1\infty{\I} + \normSZ{y}1p\Omega} \\
				&\quad +\alpha \del{C_E\normSD{F_\Omega}1q\Omega + \normSZD{F_\Gamma}11{\I}} \\
			&\leq \alpha C_E C_A\del{1+\beta C_A}^2\normSZ{v}1p\Omega\normSZ{\gamma}1\infty{\I} \\
				&\quad + \alpha C_E \del{1+\beta C_A}\normSD{F_\Omega}1q\Omega + \alpha\normSZD{F_\Gamma}11{\I}. \\
			&\leq \del{1-\theta_2}\normSZ{\gamma}1\infty{\I} 
				+ \alpha C_E \del{1+\beta C_A}\normSD{F_\Omega}1q\Omega + \alpha\normSZD{F_\Gamma}11{\I}.
	\end{align*}
The estimate \eqref{eq:g_diff_prelim_continuity_gamma} follows immediately. 
\end{proof}

\subsubsection{The First-order Fr{\'e}chet Derivative}\label{s:g_first_deriv}
In this section we will prove the first-order differentiability of the control-to-state map $G_v$. We will frequently use the notation $\norm{f(h)} = \littleo{\normLt{h}{\I}}$ which is equivalent to $\lim_{\normLt{h}\I \rightarrow 0}\frac{\norm{f(h)}}{\normLt{h}\I} = 0$.

\begin{thm}[the Fr\'echet derivative of $G_v$]\label{thm:g_diff_u} 
The control-to-state map $G_v:\calU \to \bbW^1$ 
admits a first-order Fr\'echet derivative  
$G_v' : \calU \rightarrow \mathcal{L}\del{L^2(\I), \bbW^1}$. 
Hence for all $\uop \in \calU$ and 
all $h \in L^2(\I)$, $\del{\gamma, y} :=  G_v'\del{\uop}h \in \bbW^1$ satisfies the linear variational system \eqref{eq:g_diff_general} with $F_\Omega\del{\cdot;h} = 0$ and $F_\Gamma\del{\zeta;h} = \int_0^1 h \zeta$, namely
	\begin{equation} \label{eq:g_diff_u}
		\BG\sbr{\gamma, \zeta} + \DO\sbr{\del{\gamma, y}, z+ E\zeta; \gop, \yop} = \int_0^1 h \zeta, 
			\quad\quad \forall \del{\zeta, z} \in \sobZ11{\I} \times \sobZ1q\Omega.
	\end{equation}
Moreover, the following estimate holds
	\begin{align}\label{eq:g_diff_u_estimate}
		\norm{\del{\gamma,y}}_{\bbW^1} 
			&\le \frac{\alpha}{\theta_2} \del{1+\beta C_A}^2\normSZ{v}1p\Omega\normLt{h}{\I}.
	\end{align}
\end{thm}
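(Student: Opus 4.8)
The plan is to exhibit an explicit candidate for $G_v'(\uop)$ and then verify that the Fr\'echet remainder vanishes to first order. For fixed $\uop \in \calU$ with $\del{\gop,\yop} = G_v(\uop)$, define $G_v'(\uop)h := \del{\gamma,y}$ to be the unique solution of the linear system \eqref{eq:g_diff_u}, which is precisely \eqref{eq:g_diff_general} with $F_\Omega = 0$ and $F_\Gamma(\zeta) = \int_0^1 h\zeta$. This is well posed by Corollary~\ref{cor:g_diff_prelim}, and since the data depend linearly on $h$, uniqueness forces $h \mapsto G_v'(\uop)h$ to be linear. Because $F_\Omega = 0$ and, by \eqref{eq:u_negative_to_l2}, $\normSZD{F_\Gamma}11{\I} \le \normLt{h}{\I}$, the estimate in Corollary~\ref{cor:g_diff_prelim} collapses to \eqref{eq:g_diff_u_estimate}; hence $G_v'(\uop) \in \mathcal{L}\del{L^2(\I),\bbW^1}$ for arbitrary $\uop \in \calU$.

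It remains to show $\norm{G_v(\uop+h) - G_v(\uop) - G_v'(\uop)h}_{\bbW^1} = \littleo{\normLt{h}{\I}}$. Write $\del{\gamma_h,y_h} = G_v(\uop+h)$ for $h$ small enough that $\uop + h \in \calU$, set $\del{\delta\gamma,\delta y} := \del{\gamma_h - \gop,\, y_h - \yop}$, and let $\del{\gamma_r,y_r} := \del{\delta\gamma - \gamma,\, \delta y - y}$ be the remainder, where $\del{\gamma,y} = G_v'(\uop)h$. Subtracting the equations \eqref{eq:g_var_pde_one} for $\del{\gamma_h,y_h}$ and $\del{\gop,\yop}$ and inserting the first-order expansion $A\sbr{\gamma_h} - A\sbr{\gop} = \Dif A\sbr{\gop}\difdir{\delta\gamma} + \Res_A\sbr{\gop,\delta\gamma}$ from \eqref{eq:op_RA}, one finds that $\del{\delta\gamma,\delta y}$ satisfies \eqref{eq:g_diff_u} modulo two quadratic correction terms. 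Subtracting \eqref{eq:g_diff_u} for $\del{\gamma,y}$ and using that $\BG$ and $\DO\sbr{\cdot,\cdot;\gop,\yop}$ are linear in their first argument, the linearization $\Dif A\sbr{\gop}\difdir{\delta\gamma}$ is absorbed and $\del{\gamma_r,y_r}$ solves \eqref{eq:g_diff_general} with $F_\Gamma = 0$ and
\[
  F_\Omega(w) = -\BO\sbr{\delta y,\, w;\, A\sbr{\gamma_h} - A\sbr{\gop}} - \BO\sbr{\yop + v,\, w;\, \Res_A\sbr{\gop,\delta\gamma}}, \qquad w \in \sob1q\Omega,
\]
which is a single functional of $w = z + E\zeta$; identifying this forcing is the key algebraic step.

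Corollary~\ref{cor:g_diff_prelim} then bounds $\norm{\del{\gamma_r,y_r}}_{\bbW^1}$ by a constant times $\normSD{F_\Omega}1q\Omega$, so it suffices to show this dual norm is $\littleo{\normLt{h}{\I}}$. Testing $\BO$ with H\"older gives $\normSD{F_\Omega}1q\Omega \le \normLi{A\sbr{\gamma_h}-A\sbr{\gop}}{\Omega}\normSZ{\delta y}1p\Omega + \normLi{\Res_A\sbr{\gop,\delta\gamma}}{\Omega}\normSZ{\yop+v}1p\Omega$. For the first term, \eqref{eq:A_unif_bound} yields $\normLi{A\sbr{\gamma_h}-A\sbr{\gop}}{\Omega} \le C_A\normSZ{\delta\gamma}1\infty{\I}$, while Lipschitz continuity of $G_v$ (\thmref{thm:g_lipschitz}) gives $\normSZ{\delta\gamma}1\infty{\I} + \normSZ{\delta y}1p\Omega \lesssim \normLt{h}{\I}$; hence this term is $O\del{\del{\normLt{h}{\I}}^2}$. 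For the second term, $\normSZ{\yop+v}1p\Omega$ is a fixed constant, and \thmref{thm:g_lipschitz} forces $\normSZ{\delta\gamma}1\infty{\I} \to 0$ as $\normLt{h}{\I} \to 0$, so the remainder estimate (\ref{eq:op_RA}b) gives $\normLi{\Res_A\sbr{\gop,\delta\gamma}}{\Omega} = \littleo{\normSZ{\delta\gamma}1\infty{\I}} = \littleo{\normLt{h}{\I}}$. Both terms are therefore $\littleo{\normLt{h}{\I}}$, proving $\norm{\del{\gamma_r,y_r}}_{\bbW^1} = \littleo{\normLt{h}{\I}}$ and hence the claim.

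The main obstacle is the bookkeeping in the second paragraph: one must split the nonlinear increment $A\sbr{\gamma_h} - A\sbr{\gop}$ so that its linearization is absorbed into $\DO\sbr{\cdot,\cdot;\gop,\yop}$ (cancelling against the derivative equation), while the genuinely nonlinear leftovers land in $F_\Omega$. The decisive point is that these leftovers are \emph{products} of two increments --- $\delta y$ against $A\sbr{\gamma_h}-A\sbr{\gop}$, and $\Res_A$ against the fixed datum $\yop+v$ --- so that combining the Lipschitz bound (\thmref{thm:g_lipschitz}) with the superlinear decay (\ref{eq:op_RA}b) upgrades each to $\littleo{\normLt{h}{\I}}$.
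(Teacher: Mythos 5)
Your proposal is correct and follows essentially the same route as the paper: define $G_v'(\uop)h$ as the solution of \eqref{eq:g_diff_u} (well-posed and bounded by Corollary~\ref{cor:g_diff_prelim} together with \eqref{eq:u_negative_to_l2}), show the remainder solves \eqref{eq:g_diff_general} with $F_\Gamma=0$ and a quadratic forcing $F_\Omega$, and conclude via the Lipschitz bounds \eqref{eq:g_lipschitz_gamma}--\eqref{eq:g_lipschitz_y} and the superlinear decay (\ref{eq:op_RA}b). Your $F_\Omega$ pairs $\Res_A$ with $\yop+v$ and $A\sbr{\gamma_h}-A\sbr{\gop}$ with $\delta y$, whereas the paper pairs $\Res_A$ with $y(\uop+h)+v$ and $\Dif A\sbr{\gop}\difdir{\delta\gamma}$ with $\delta y$, but these two expressions are algebraically identical and yield the same estimate.
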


\begin{proof} The derivation of \eqref{eq:g_diff_u} is tedious but straightforward, so we skip it. The estimate \eqref{eq:g_diff_u_estimate} follows from Corollary~\ref{cor:g_diff_prelim}.

We turn our focus to proving that $G_v'$ is the Fr\'echet derivative of $G_v$. To this end, we must show that the remainder operator $\Res_{G_v}: \calU \times \Ltwo{\I} \to \bbW^1$, defined as
	\begin{align} \label{eq:frechet_deriv}
	    \Res_{G_v}\sbr{\uop, h} := G_v(\uop + h) - G_v(\uop) - G_v'\del{\uop}h , 
	\end{align}
satisfies for all $\uop \in \calU$
	\[
	     \lim_{\normLt{h}{\I} \to 0} \frac{\norm{\Res_{G_v}\sbr{\uop, h}}_{\bbW^1}}{\normLt{h}{\I}}  = 0.
	\]     
Since we do not have direct access to $\norm{\Res_{G_v}\sbr{\uop,
    h}}_{\bbW^1}$, the strategy of the proof is to first show that
$\Res_{G_v}\sbr{\uop, h}$ satisfies  \eqref{eq:g_diff_general} for
some $F_\Omega\del{\cdot;h} \in \sob1q\Omega^*$ and $F_\Gamma\del{\cdot;h} = 0$,
provided $\normLt{h}{\I}$ is small enough so that $\uop + h \in \calU$; recall
  that $\calU$ is open in $L^2(\I)$. Next, owing to the estimates in Corollary~\ref{cor:g_diff_prelim}, it suffices to check that 
	\[
		\lim_{\normLt{h}{\I} \to 0} \frac{\normSD{F_\Omega\del{\cdot;h}}1q\Omega}{\normLt{h}{\I}}  = 0.
	\]
 
To avoid any ambiguity we adopt the following notation in this proof, 
	\begin{align*}
		\del{\gamma(\uop), y(\uop)} &:= G_v\del{\uop},  &\del{\gamma(\uop+h), y(\uop+h)} &:= G_v(\uop+h) \\
		\del{\gamma_u(\uop)h, y_u(\uop)h} &:= G'_v(\uop)h, &\del{\delta\gamma, \delta y} = \del{\Res_\gamma\sbr{\uop,h},\Res_y\sbr{\uop,h}} &:= \Res_{G_v}\del{\uop, h},
	\end{align*}
whence 
	\begin{align*}
		\delta\gamma &= \gamma\del{\uop + h} - \gamma\del{\uop} - \gamma_u\del\uop h
		&\delta y &= y\del{\uop + h} - y\del{\uop} - y_u\del\uop h.
	\end{align*}

According to the definition \eqref{eq:frechet_deriv} we start by combining \eqref{eq:g_var_pde} for $G_v\del{\uop+h}$ and $G_v\del\uop$ with \eqref{eq:g_diff_u} to obtain for every $\del{\zeta,z}$ in $\sobZ11{\I} \times \sobZ 1 q \Omega$ 
	\begin{align*}
		0 	&= \BG\sbr{\gamma(\uop+h) - \gamma(\uop) - \gamma_u(\uop)h, \zeta} 
				- \DO\sbr{\del{\gamma_u(\uop)h, y_u(\uop)h}, z + E\zeta; \gamma(\uop), y(\uop)}\\
			&\quad + \BO\sbr{y(\uop+h)+v, z+E\zeta; A\sbr{\gamma(\uop+h)}} - \BO\sbr{y(\uop)+v, z+E\zeta; A\sbr{\gamma(\uop)}}.		 
	\end{align*}
Adding and subtracting $\DO\sbr{\del{\gamma(\uop+h) - \gamma(\uop), y(\uop+h)-y(\uop)}, z + E\zeta;\gamma(\uop),y(\uop)}$ to the previous equation and utilizing the definition of $\delta\gamma$ and $\delta y$ above, yields for  every  $\del{\zeta, z}$ in $\sobZ11{\I} \times \sobZ1q\Omega$
	\begin{align*}
		\BG\sbr{\delta \gamma, \zeta} + \DO\sbr{\del{\delta \gamma, \delta y}, z + E\zeta; \gamma(\uop), y(\uop)}
			&= F_\Omega(z + E\zeta;h),
	\end{align*}
where
	\begin{align*}
		F_\Omega(\cdot;h) &= 
			\BO\sbr{y(\uop+h) + v, \cdot; A\sbr{\gamma(\uop)}-A\sbr{\gamma(\uop+h)} } \\
			&\quad +\BO\sbr{y(\uop) + v, \cdot; \Dif A\sbr{\gamma(\uop)}\difdir{\gamma(\uop+h) - \gamma(\uop)}}.
	\end{align*}
The fact that $F_\Omega\del{\cdot;h}$ is in $\sob1q\Omega^*$ follows from the
continuity of $\BO\sbr{w,\cdot;V}$ with $\normSZ{w}1p\Omega$ and
$\normLi{V}\Omega$ bounded uniformly
(c.f. \eqref{eq:A_unif_bound}). Our last step is to add and subtract
$\BO\sbr{y(\uop+h) + v, \cdot; \Dif
  A\sbr{\gamma(\uop)}\difdir{\gamma(\uop+h) - \gamma(\uop)}}$ to
$F_\Omega\del{\cdot;h}$, employ the definition of the remainder $\Res_A$ in \eqref{eq:op_RA} and the Lipschitz estimates \eqref{eq:g_lipschitz_y} and \eqref{eq:g_lipschitz_gamma} to obtain
	\begin{align*}
 		\lim_{\normLt{h}{\I} \to 0} \frac{\normLi{\Res_A\sbr{\gamma(\uop), \gamma(\uop+h) - \gamma(\uop)}}\Omega}{\normLt{h}{\I}}  = 0,
 	\end{align*}
as well as 
	\begin{align*}
		\normSD{F_\Omega\del{\cdot;h}}1q\Omega
			&\leq \del{1+\beta C_A}\normSZ{v}1p\Omega\normLi{\Res_A\sbr{\gamma(\uop), \gamma(\uop+h) - \gamma(\uop)}}\Omega \\
			&\quad+ C_A\normSZ{y(\uop + h) - y(\uop)}1p\Omega\normSZ{\gamma(\uop+h)-\gamma(\uop)}1\infty{\I} \\
			&= \littleo{\normLt{h}{\I}}.
	\end{align*}
This concludes the proof.
\end{proof}
\subsection{The Second-order Fr{\'e}chet Derivative}
\label{s:g_twice_diff}
The main 
result of this subsection is to show that $G_v(u)$ is twice Fr\'echet differentiable
with respect to $u$. 
We adopt a direct approach in line with subsections \ref{s:g_lipschitz}
and \ref{s:g_differentiable} in favor of the technique based on the implicit function theorem described in \cite[pp. 239-240]{FTroltzsch_2010a}, which would
require nonobvious modifications to account for the nonlinear 
structure of the state equations.
In fact, proceeding as in \thmref{thm:g_lipschitz} we get the following.
\begin{prop}[Lipschitz continuity of $G_v'$]\label{prop:g_twice_diff_u} There exists a constant 
$L_{G'} > 0$, such that for every $u_1, u_2 \in \calU$
	\begin{equation}\label{eq:g_twice_diff_u}
		\sup_{0\neq h \in L^2(\I)}\frac{\norm{G'_v(u_1)h - G'_v(u_2)h}_{\bbW^1}}{\normLt{h}{\I}} \leq L_{G'}\normLt{u_1-u_2}{\I}.
	\end{equation}
\end{prop}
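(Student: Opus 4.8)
The plan is to mirror the structure of the Lipschitz-continuity argument for $G_v$ in \thmref{thm:g_lipschitz}, but now applied to the linear variational system \eqref{eq:g_diff_u} that defines $G_v'(u)h$. Fix $u_1, u_2 \in \calU$ and an arbitrary direction $h \in L^2(\I)$, and write $\del{\gamma_i, y_i} := G_v'(u_i)h$ together with $\del{\gop_i, \yop_i} := G_v(u_i)$ for $i=1,2$. Each pair $\del{\gamma_i, y_i}$ satisfies \eqref{eq:g_diff_u} with the linearization taken about $\del{\gop_i, \yop_i}$, so that for every $\del{\zeta, z} \in \sobZ11{\I}\times\sobZ1q\Omega$,
\[
	\BG\sbr{\gamma_i, \zeta} + \BO\sbr{y_i, z+E\zeta; A\sbr{\gop_i}} + \BO\sbr{\yop_i + v, z+E\zeta; \Dif A\sbr{\gop_i}\difdir{\gamma_i}} = \int_0^1 h\zeta.
\]
First I would subtract the two systems. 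The right-hand sides coincide and cancel, leaving a linear variational equation for the difference $\del{\gamma_1-\gamma_2, y_1-y_2}$ whose forcing is entirely built from the discrepancy between the two linearization points $\del{\gop_1, \yop_1}$ and $\del{\gop_2, \yop_2}$.

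The key algebraic step is to reorganize the subtracted equation so that the left-hand side is exactly the operator $\BG[\cdot,\cdot] + \DO[\cdot,\cdot;\gop_1,\yop_1]$ appearing in \eqref{eq:g_diff_general} (linearized about state $u_1$), with all the mismatched terms moved to the right as a forcing functional $F_\Omega(\cdot;h)$ (and $F_\Gamma \equiv 0$). This requires adding and subtracting intermediate bilinear forms in the now-familiar telescoping pattern: I would replace $A\sbr{\gop_2}$ by $A\sbr{\gop_1}$ inside the $\BO\sbr{y_2,\cdot;\cdot}$ term, replace $\Dif A\sbr{\gop_2}$ by $\Dif A\sbr{\gop_1}$ inside the linearized term, and replace $\yop_2$ by $\yop_1$ in the factor $\yop_i + v$. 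The resulting $F_\Omega(\cdot;h)$ is then a sum of three types of contributions: one proportional to $A\sbr{\gop_1}-A\sbr{\gop_2}$ acting on $y_2$, one proportional to $\Dif A\sbr{\gop_1}\difdir{\gamma_2} - \Dif A\sbr{\gop_2}\difdir{\gamma_2}$ acting on $\yop_1+v$, and one proportional to $\yop_1 - \yop_2$ through the remaining linearized form. Once this is in place, Corollary~\ref{cor:g_diff_prelim} applies verbatim and bounds $\norm{\del{\gamma_1-\gamma_2,y_1-y_2}}_{\bbW^1}$ by a constant times $\normSD{F_\Omega(\cdot;h)}1q\Omega$.

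It then remains to estimate $\normSD{F_\Omega(\cdot;h)}1q\Omega$. Here I would invoke the uniform bounds on $A$ and $\Dif A$ from Proposition~\ref{prop:op_A} (via \eqref{eq:A_unif_bound}), together with the extension bound \eqref{eq:w1_wq_ext}, to bound each of the three contributions. The factors $\normLi{A\sbr{\gop_1}-A\sbr{\gop_2}}\Omega$ and the analogous quantity for $\Dif A$ are controlled by $\normSZ{\gop_1-\gop_2}1\infty{\I}$ using the Lipschitz structure of the Nemytskii operators (the linear-in-$h$ representations \eqref{eq:op_DA} make $\gamma \mapsto \Dif A\sbr{\gamma}\difdir{\gamma}$ Lipschitz through the smooth dependence of $\partial_a\varphi, \partial_b\varphi$ on $\gamma$). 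The factors $\normSZ{y_2}1p\Omega$ and $\normSZ{\gamma_2}1\infty{\I}$ are in turn bounded by $\normLt{h}{\I}$ through the derivative estimate \eqref{eq:g_diff_u_estimate}, while $\normSZ{\yop_1-\yop_2}1p\Omega$ and $\normSZ{\gop_1-\gop_2}1\infty{\I}$ are bounded by $\normLt{u_1-u_2}{\I}$ via the Lipschitz continuity of $G_v$ itself, namely \eqref{eq:g_lipschitz_y} and \eqref{eq:g_lipschitz_gamma}. Collecting these yields $\normSD{F_\Omega(\cdot;h)}1q\Omega \le C\,\normLt{u_1-u_2}{\I}\,\normLt{h}{\I}$, and dividing by $\normLt{h}{\I}$ and taking the supremum over $h$ gives \eqref{eq:g_twice_diff_u} with an explicit $L_{G'}$.

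The main obstacle I anticipate is purely bookkeeping rather than conceptual: ensuring that every mismatch term in the telescoped $F_\Omega(\cdot;h)$ factors cleanly into one quantity controlled by $\normLt{u_1-u_2}{\I}$ times one controlled by $\normLt{h}{\I}$, with no leftover term that is only bounded (not small). The delicate point is the middle contribution involving $\Dif A\sbr{\gop_1}\difdir{\gamma_2} - \Dif A\sbr{\gop_2}\difdir{\gamma_2}$: one must exploit that the difference acts on the \emph{same} direction $\gamma_2$ and that $\gamma_2$ carries a factor of $\normLt{h}{\I}$, so that the Lipschitz dependence of the coefficient matrices $A_1, A_2$ in \eqref{eq:op_DA} on $\gamma$ supplies the $\normLt{u_1-u_2}{\I}$ factor. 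Provided this separation is carried out carefully, the estimate closes exactly as in \thmref{thm:g_lipschitz}.
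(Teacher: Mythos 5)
Your proposal is correct and is essentially the proof the paper intends: the paper omits the argument entirely, stating only that one proceeds as in \thmref{thm:g_lipschitz}, and your subtract--telescope--estimate argument is exactly that strategy, with Corollary~\ref{cor:g_diff_prelim} serving as the packaged form of the inf-sup bounds applied to the difference system. The delicate factorization you flag does close, since the entries of $A_1\sbr{\gamma}, A_2\sbr{\gamma}$ in \eqref{eq:op_DA} are smooth functions of $\del{\gamma, \totalD_{x_1}\gamma}$ on the compact set fixed by the state constraint, hence Lipschitz, so $\norm{\del{\Dif A\sbr{\gop_1}-\Dif A\sbr{\gop_2}}\difdir{\gamma_2}}_{L^\infty\del{\Omega}^{2\times 2}}$ is indeed bounded by a constant times $\normSZ{\gop_1-\gop_2}1\infty{\I}\,\normSZ{\gamma_2}1\infty{\I}$.
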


\begin{thm}[the Fr\'echet derivative of $G'_v$] \label{thm:g_twice_diff_u} 
	The control-to-state map $G_v:\calU\to\bbW^1$ 
admits a second-order Fr\'echet derivative  
$G_v'' : \calU \rightarrow \mathcal{L}\del{L^2(\I)\!\times\!
  L^2(\I),\bbW^1}$. Hence for all $\uop \in \calU$ and all
$\del{h_1 , h_2} \in L^2(\I) \times L^2(\I)$, $\del{\gamma, y} :=
G''_v\del{\uop}h_1h_2 \in \bbW^1$ satisfies the linear variational
system \eqref{eq:g_diff_general}, namely for
all $\del{\zeta,z}$ in $\sobZ11{\I} \times \sobZ1q\Omega$
	\begin{align} \label{eq:g_nd_diff}
		 \BG\sbr{\gamma,\zeta} + \DO\sbr{\del{\gamma,y},z+E\zeta;\gop, \yop} 
			&= F_\Omega\del{z+E\zeta;h_1,h_2},
	\end{align}
with $F_\Omega\del{\cdot;h_1,h_2} \in \sob1q\Omega^*$ given by 
	\begin{equation}\begin{aligned} \label{eq:F_Omega}
		F_\Omega\del{\cdot;h_1,h_2}
			&:= -\BO\sbr{y_1,\cdot;\Dif A\sbr\gop\difdir{\gamma_2}}  \\
			&\quad -\BO\sbr{y_2, \cdot; \Dif A\sbr\gop\difdir{\gamma_1}} 
			       -\BO\sbr{\yop + v,\cdot;\Dif^2 \!\!  A\sbr\gop\difdir{\gamma_1,\gamma_2}}, 
	\end{aligned}\end{equation}
and $\del{\gamma_i, y_i} := G_v'\del{\uop}h_i$, for $i=1,2$. Moreover, the following estimates hold 
	\begin{align}
		\label{eq:g_nd_diff_estimate_gamma}
		\normSZ{\gamma}1\infty{\I} 
			&\leq \frac{\alpha^3}{\theta_2^3} C_E C_A\del{1+2\beta C_A}\del{1+\beta C_A}^2\normSZ{v}1p\Omega \normLt{h_1}{\I}\normLt{h_2}{\I},  \\
		\label{eq:g_nd_diff_estimate_y}
		\normSZ{y}1p\Omega
			&\leq \frac{\alpha^2}{\theta_2^3}\beta C_A\del{1+2\beta C_A}\del{1+\beta C_A}\normSZ{v}1p\Omega
				\normLt{h_1}{\I}\normLt{h_2}{\I}. 
	\end{align}
\end{thm}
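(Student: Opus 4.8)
The plan is to mirror the structure of the first-order differentiability proof in Theorem~\ref{thm:g_diff_u}, since the statement asserts that $G_v''$ is the Fr\'echet derivative of $G_v'$. First I would establish that the candidate second derivative $\del{\gamma,y} := G_v''\del{\uop}h_1h_2$ is indeed well-defined: the variational system \eqref{eq:g_nd_diff} is precisely \eqref{eq:g_diff_general} with $F_\Gamma = 0$ and the specific $F_\Omega\del{\cdot;h_1,h_2}$ given in \eqref{eq:F_Omega}, so well-posedness and the estimates \eqref{eq:g_nd_diff_estimate_gamma}--\eqref{eq:g_nd_diff_estimate_y} follow immediately from Corollary~\ref{cor:g_diff_prelim} once I bound $\normSD{F_\Omega\del{\cdot;h_1,h_2}}1q\Omega$. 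That bound is routine: apply the continuity of $\BO$ together with the uniform bounds \eqref{eq:A_unif_bound} on $A$, $\Dif A$, and $\Dif^2\!\!A$, and then insert the first-order estimate \eqref{eq:g_diff_u_estimate} for $\del{\gamma_i,y_i} = G_v'\del{\uop}h_i$, which contributes the factors $\normLt{h_i}{\I}$ and the powers of $\del{1+\beta C_A}$ and $\alpha/\theta_2$ appearing in the claimed constants.

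The heart of the proof is verifying the Fr\'echet property, namely that the remainder
	\begin{align*}
		\Res_{G_v'}\sbr{\uop,h_2}h_1 := G_v'(\uop + h_2)h_1 - G_v'(\uop)h_1 - G_v''(\uop)h_1h_2
	\end{align*}
satisfies $\norm{\Res_{G_v'}\sbr{\uop,h_2}}_{\mathcal{L}\del{L^2(\I),\bbW^1}} = \littleo{\normLt{h_2}{\I}}$. Following the strategy of Theorem~\ref{thm:g_diff_u}, I would not estimate this remainder directly but instead show that for each fixed $h_1$ the pair $\Res_{G_v'}\sbr{\uop,h_2}h_1$ solves the linear system \eqref{eq:g_diff_general} with $F_\Gamma = 0$ and some right-hand side $F_\Omega\del{\cdot;h_1,h_2}$, and then invoke Corollary~\ref{cor:g_diff_prelim} to reduce everything to showing $\normSD{F_\Omega\del{\cdot;h_1,h_2}}1q\Omega = \littleo{\normLt{h_2}{\I}}$ uniformly in $h_1$ with $\normLt{h_1}{\I}=1$. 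The key manipulation is to write \eqref{eq:g_diff_u} for $G_v'(\uop+h_2)h_1$ at base point $\del{\gamma(\uop+h_2),y(\uop+h_2)}$ and for $G_v'(\uop)h_1$ at base point $\del{\gop,\yop}$, subtract, and add and subtract the appropriate $\DO$ terms so that the left-hand side collapses to the operator $\BG + \DO\sbr{\cdot,\cdot;\gop,\yop}$ applied to the remainder; the leftover terms form $F_\Omega$.

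The main obstacle will be organizing $F_\Omega\del{\cdot;h_1,h_2}$ into pieces that are each manifestly $\littleo{\normLt{h_2}{\I}}$. These pieces are of three types: terms carrying the remainder $\Res_A\sbr{\gop,\gamma(\uop+h_2)-\gop}$ of the first derivative of $A$, terms carrying the second-order remainder $\Res_{\Dif A}$ from \eqref{eq:op_RDA}, and cross terms of the form $\Dif A\sbr{\gop}$ applied to differences such as $G_v'(\uop+h_2)h_1 - G_v'(\uop)h_1$, paired against the base-point perturbation $y(\uop+h_2) - \yop$. For the remainder terms I would use \eqref{eq:op_RA}b and \eqref{eq:op_RDA}b together with the Lipschitz estimate \eqref{eq:g_lipschitz_gamma} to convert the $\sobZ1\infty{\I}$-smallness of $\gamma(\uop+h_2)-\gop$ into $\littleo{\normLt{h_2}{\I}}$; for the cross terms I would use the Lipschitz continuity of $G_v'$ from Proposition~\ref{prop:g_twice_diff_u} to control $\norm{G_v'(\uop+h_2)-G_v'(\uop)}$ by $\normLt{h_2}{\I}$, and the Lipschitz continuity of $G_v$ from Theorem~\ref{thm:g_lipschitz} to control the base-point differences. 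Bookkeeping the interplay between the $L^2(\I)$-norm of $h_2$ and the $\sobZ1\infty{\I}$-smallness of the induced boundary perturbation, exactly as in the final display of Theorem~\ref{thm:g_diff_u}, is where the care is needed, but no genuinely new analytic ingredient beyond those already proven is required.
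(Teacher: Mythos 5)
Your overall strategy is the same as the paper's: define the remainder $G_v'(\uop+h_2)h_1 - G_v'(\uop)h_1 - G_v''(\uop)h_1h_2$, show that for fixed $h_1$ it solves the linear system \eqref{eq:g_diff_general} with $F_\Gamma=0$ and a suitable right-hand side $\delta F_\Omega$, and invoke Corollary~\ref{cor:g_diff_prelim} to reduce everything to $\normSD{\delta F_\Omega\del{\cdot;h_1,h_2}}1q\Omega = \littleo{\normLt{h_2}{\I}}$ uniformly in $\normLt{h_1}{\I}=1$; the well-posedness part and the bounds \eqref{eq:g_nd_diff_estimate_gamma}--\eqref{eq:g_nd_diff_estimate_y} are likewise obtained from Corollary~\ref{cor:g_diff_prelim}. (A minor point: to reproduce the exact constants in \eqref{eq:g_nd_diff_estimate_gamma}--\eqref{eq:g_nd_diff_estimate_y} the paper bounds $\del{\gamma_i,y_i}$ componentwise via \eqref{eq:g_diff_prelim_continuity_gamma}--\eqref{eq:g_diff_prelim_continuity_y} with $F_\Omega=0$, rather than through the combined estimate \eqref{eq:g_diff_u_estimate} you cite, which yields slightly larger constants.)

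However, there is a genuine gap in your treatment of $\delta F_\Omega$. Your classification into three types --- $\Res_A$ terms, $\Res_{\Dif A}$ terms, and cross terms handled by Lipschitz bounds --- does not exhaust the leftover terms. Since $G_v''(\uop)h_1h_2$ is built from $\gamma_u(\uop)h_2$ and $y_u(\uop)h_2$, whereas the difference $G_v'(\uop+h_2)h_1-G_v'(\uop)h_1$ produces the state differences $\gamma(\uop+h_2)-\gop$ and $y(\uop+h_2)-\yop$, the decomposition unavoidably generates terms carrying the \emph{first-order Fr\'echet remainder of $G_v$ itself}, such as
\begin{equation*}
	\BO\sbr{y_u(\uop)h_1,\cdot;\Dif A\sbr{\gop}\difdir{\gamma(\uop+h_2)-\gamma(\uop)-\gamma_u(\uop)h_2}}
\end{equation*}
(these are the terms $T_1$, $T_5$ and $T_7$ in the paper's nine-term decomposition). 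For these terms the tools you list are insufficient: the Lipschitz continuity of $G_v$ and of $G_v'$ only gives a bound of order $O\del{\normLt{h_1}{\I}\normLt{h_2}{\I}}$, which is \emph{not} $\normLt{h_1}{\I}\,\littleo{\normLt{h_2}{\I}}$. What is needed at exactly this point --- and what the paper invokes --- is the Fr\'echet differentiability of $G_v$ from Theorem~\ref{thm:g_diff_u}, i.e.\ $\normSZ{\gamma(\uop+h_2)-\gamma(\uop)-\gamma_u(\uop)h_2}1\infty{\I} = \littleo{\normLt{h_2}{\I}}$ together with the analogous statement for the $y$-component. That result is of course already proven and available to you, so the proof can be repaired without new analysis, but your plan as written never invokes it, and the three terms above cannot be closed without it.
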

\begin{proof}
We skip the derivation of \eqref{eq:g_nd_diff} because it is tedious but straightforward. The estimates for $\normSZ{\gamma}1\infty{\I}$ and $\normSZ{y}1p\Omega$ are a consequence of Corollary~\ref{cor:g_diff_prelim} with $F_\Gamma = 0$ after estimating \eqref{eq:F_Omega}, namely
	\begin{align*}
		\normSD{F_\Omega\del{\cdot;h_1,h_2}}1q\Omega
			&\leq C_A\del{ \normSZ{y_1}1p\Omega\normSZ{\gamma_2}1\infty{\I}  
					+ \normSZ{y_2}1p\Omega\normSZ{\gamma_1}1\infty{\I}} \\
			&\quad + C_A\del{1+\beta C_A}\normSZ{v}1p\Omega 
					\normSZ{\gamma_1}1\infty{\I} \normSZ{\gamma_2}1\infty{\I} \\
			&\leq 2\del{\frac{\alpha}{\theta_2}}^2\beta C_A^2\del{1+\beta C_A}\normSZ{v}1p\Omega
				\normLt{h_1}{\I}\normLt{h_2}{\I} \\
			&\quad + \del{\frac{\alpha}{\theta_2}}^2C_A\del{1+\beta C_A}\normSZ{v}1p\Omega \normLt{h_1}{\I}\normLt{h_2}{\I} \\
			&= \del{\frac{\alpha}{\theta_2}}^2C_A\del{1+2\beta C_A}\del{1+\beta C_A}\normSZ{v}1p\Omega \normLt{h_1}{\I}\normLt{h_2}{\I},
	\end{align*}
where we have used \eqref{eq:g_diff_prelim_continuity_gamma}-\eqref{eq:g_diff_prelim_continuity_y} with $F_\Omega = 0$ for $\del{\gamma_i, y_i}$ along with \eqref{eq:u_negative_to_l2}.

	The strategy for showing second-order Fr\'echet differentiability of $G_v$ is the same as in \autoref{thm:g_diff_u}: we first  show that the remainder 
		\begin{align}  \label{eq:sec_ord_res}
			\del{\delta\gamma, \delta y} := G_v'\del{\uop + h_2}h_1 - G_v'\del{\uop}h_1 -
								 G_v''\del{\uop}{h_1h_2},
		\end{align} 
satisfies the linear variational system in \eqref{eq:g_diff_general} for a suitable right-hand side $\delta F_\Omega \in \sob1q\Omega^*$, and prove that 
	\begin{align}\label{eq:res_rhs_little_o}
		\sup_{0\neq h_1 \in L^2(\I)} \frac{\normSD{\delta F_\Omega\del{\cdot;h_1,h_2}}1q\Omega}{\normLt{h_1}{\I}} = \littleo{\normLt{h_2}{\I}} ,
	\end{align}
with $h_1, h_2 \in L^2(\I)$ arbitrary but small enough so that if 
$\uop \in \calU$, then $\uop + h_1, \uop + h_2 \in \calU$.
	
	As a tradeoff between clarity and space we denote $u_i = \uop + h_i$,  
	and 
	\begin{align*}
		\del{\gamma(u_i), y(u_i)} &:= G_v(u_i), \\ 
		\del{\gamma_u\del{u_i} h_j, y_u\del{u_i} h_j} 
			 	&:= G_v'\del{u_i}{h_j}, \\ 
		\del{\gamma_{uu}\del{\uop} h_1h_2, y_{uu}\del{\uop} h_1 h_2}
				&:= G_v''\del{\uop}{h_1h_2}, \\ 
	\end{align*}
	for $i,j = 1,2$, whence
	\begin{align*}
		\delta\gamma &= \gamma_u\del{u_2}h_1 - \gamma_u\del{\uop} h_1 - \gamma_{uu}\del{\uop} h_1 h_2, \\
		\delta y &= y_u\del{u_2}h_1 - y_u\del{\uop} h_1 - y_{uu}\del{\uop} h_1 h_2.
	\end{align*}
	
	According to the definition \eqref{eq:sec_ord_res} we start by combining \eqref{eq:g_diff_u}  for $G_v'\del{u_2}h_1$ and $G_v'\del{\uop} h_1$ with \eqref{eq:g_nd_diff} to obtain for every $\del{\zeta,z}$ in $\sobZ11{\I} \times \sobZ 1 q \Omega$ 
	\begin{align*}
		-F_\Omega\del{z+E\zeta;h_1,h_2}
			&= \BG\sbr{\delta\gamma, \zeta} 
				+ \DO\sbr{G_v'(u_2)h_1, z + E\zeta; G_v(u_2)}
			\\&\quad
				- \DO\sbr{G_v'(\uop)h_1 + G_v''(\uop)h_1h_2, z+ E\zeta; G_v(\uop)},
	\end{align*}
	where $-F_\Omega(\cdot;h_1,h_2) = \sum_{i=1}^3 F_i(\cdot;h_1,h_2)$ is defined in \eqref{eq:F_Omega}. Further manipulation, based on adding to both sides the following two additional terms,
	\begin{align*}
		F_4\del{z + E\zeta;h_1,h_2} &=  \DO\sbr{G_v'(u_2)h_1, z + E\zeta; G_v(\uop)}, \\
		F_5\del{z+ E\zeta;h_1,h_2} &= - \DO\sbr{G_v'(u_2)h_1, z + E\zeta; G_v(u_2)}
	\end{align*}
	leads to 
	\begin{align*}
		 \BG\sbr{\delta\gamma, \zeta} + \DO\sbr{\del{\delta \gamma, \delta y},z + E\zeta;\gamma(\uop), y(\uop)}& = -\delta F_\Omega\del{z+E\zeta;h_1,h_2},
	\end{align*}
where $\delta F_\Omega\del{\cdot;h_1,h_2} = \sum_{i=1}^5 F_i\del{\cdot;h_1,h_2}$ is clearly in $\sob1q\Omega^*$. To create additional cancellations we further decompose $\delta F_\Omega = \sum_{i=1}^{9} T_i$ as follows:
	\begin{align*}
		T_1 
			&= \BO\sbr{y_u(\uop)h_1,\cdot;
					\Dif A\sbr{\gamma(\uop)}\difdir{\gamma(u_2)-\gamma(\uop) - 					\gamma_u(\uop)h_2} }, \\
		T_2 
			&= {\BO\sbr{y_u(u_2)h_1-y_u(\uop)h_1,\cdot;
					\Dif A\sbr{\gamma(\uop)}\difdir{\gamma(u_2)-\gamma(\uop)} }}, \\
		T_3 
			&= \BO\left[{ y_u(u_2)h_1,\cdot;
						A\sbr{\gamma(u_2)} - A\sbr{\gamma(\uop)} 
					}\right.  \left.{
						-\Dif A\sbr{\gamma(\uop)}\difdir{\gamma(u_2)-\gamma(\uop)} 
					}\right], \\
		T_4 
			&= {\BO\sbr{y_u(\uop)h_2,\cdot;
					\Dif A\sbr{\gamma(\uop)}\difdir{\gamma_u(u_2)h_1 
												    - \gamma_u(\uop)h_1} }}, \\
		T_5 
			&= {\BO\sbr{y(u_2)-y(\uop)-y_u(\uop)h_2,\cdot;
			\Dif A\sbr{\gamma(\uop)}\difdir{\gamma_u(u_2)h_1} }}, \\
		T_6 
			&= {\BO\left[{y(u_2)+v,\cdot; 
					\del{\Dif A\sbr{\gamma(u_2)}-\Dif A\sbr{\gamma(\uop)}}\difdir{\gamma_u(u_2)h_1 }
				}\right.} \\ &\qquad\qquad\qquad\qquad\quad {\left.{
					-\Dif^2 \!\!  A\sbr{\gamma(\uop)}\difdir{\gamma_u(u_2)h_1, \gamma(u_2)-\gamma(\uop)} 
				}\right]}, \\
		T_7
			&= {\BO\left[{y(u_2)+v,\cdot;
				}\right.} {\left.{
				\Dif^2 \!\!  A\sbr{\gamma(\uop)}\difdir{\gamma_u(u_2)h_1, 
									    \gamma(u_2)-\gamma(\uop) - \gamma_u(\uop)h_2} 
				}\right]}, \\
		T_8 
			&= {\BO\sbr{y(u_2)+v,\cdot;
			\Dif^2 \!\!  A\sbr{\gamma(\uop)}\difdir{\gamma_u(u_2)h_1-\gamma_u(\uop)h_1, \gamma_u(\uop)h_2} }}, \\
		T_9 
			&= {\BO\sbr{y(u_2)-y(\uop),\cdot;
			\Dif^2 \!\!  A\sbr{\gamma(\uop)}\difdir{\gamma_u(\uop)h_1, \gamma_u(\uop)h_2} }} ,
	\end{align*}
	where $T_i = T_i\del{h_1,h_2}$. 
We now estimate each of these terms separately and show 
	\begin{align}\label{eq:term_little_o}
		\sup_{0\neq h_1 \in L^2(\I)} \frac{\normSD{T_i\del{h_1,h_2}}1q\Omega}{\normLt{h_1}{\I}} = \littleo{\normLt{h_2}{\I}}.
	\end{align}
which obviously imply \eqref{eq:res_rhs_little_o}.

\begin{enumerate}[$\bullet$]
	\item Term $T_1$: Since 
		\[
			\normSD{T_1}1q\Omega
			\leq C_A\normSZ{y_u\del{\uop} h_1}1p\Omega 
				\normSZ{{\gamma\del{u_2} - \gamma\del{\uop} - \gamma_u\del{\uop} h_2}}1\infty{\I},
		\]
	the estimate \eqref{eq:g_diff_u_estimate}, together with 
		\[
			\normSZ{\gamma\del{u_2} - \gamma\del{\uop} - \gamma_u\del{\uop} h_2}{1}{\infty}{\I} = \littleo{\normLt{h_2}{\I}},
		\]
	implies \eqref{eq:term_little_o}.
	
	\item Term $T_2$: Since
	 	\[
		 	\normSD{T_2}1q\Omega 
				\leq C_A\normSZ{y_u\del{u_2}h_1 - y_u\del{\uop}h_1}1p\Omega\normSZ{\gamma\del{u_2}-\gamma\del{\uop}}1\infty{\I},
		\]
	it suffices to recall the Lipschitz properties \eqref{eq:g_lipschitz} of $G_v$, and \eqref{eq:g_twice_diff_u} of $G_v'$ to deduce \eqref{eq:term_little_o}.
	
	\item Term $T_3$: Invoking the Fr\'echet differentiability \eqref{eq:op_RA} of $A$,
	and the Lipschitz property \eqref{eq:g_lipschitz_gamma} of $\gamma(\uop)$ we infer that
		\begin{multline*}
            		\normLi{A\sbr{\gamma\del{u_2}} - A\sbr{\gamma\del{\uop}} - \Dif A\sbr{\gamma\del{\uop}}\difdir{\gamma\del{u_2} - \gamma\del{\uop}}}\Omega  \\                      		= \littleo{\normSZ{\gamma\del{u_2} - \gamma\del{\uop}}1\infty{\I}} = \littleo{\normLt{h_2}{\I}}.
		\end{multline*}
		This, in conjuction with $\normSZ{y_u\del{u_2} h_1}1p\Omega \lesssim\normLt{h_1}{\I}$, yields \eqref{eq:term_little_o}.
	
	\item Term $T_4$: In view of the Lipschitz property \eqref{eq:g_twice_diff_u} of $G_v'$ 
		\[
			\normSZ{\gamma_u\del{u_2} h_1 - \gamma_u\del{\uop}h_1}1\infty{\I}
				\lesssim \normLt{h_1}{\I}\normLt{h_2}{\I},
		\]
	property \eqref{eq:term_little_o} follows from $\normSZ{y_u\del{\uop} h_2}1p\Omega \lesssim \normLt{h_2}{\I}$.

	\item Term $T_5$: Since $y(u)$ is Fr\'echet differentiable according to \autoref{thm:g_diff_u}, namely 
		\[
	        \normSZ{y\del{u_2} - y\del{\uop} - y_u\del{\uop} h_2}1p\Omega 
	      		= \littleo{\normLt{h_2}{\I}} ,
		\]
	the bound \eqref{eq:term_little_o} is a consequence of $\normSZ{\gamma_u\del{u_2} h_1}1\infty{\I} \lesssim \normLt{h_1}{\I}$.
	
	\item Term $T_6$: We recall the second-order Fr\'echet differentiability of the matrix $A$ with respect to $\gamma$, namely \eqref{eq:op_RDA}, and the Lipschitz continuity \eqref{eq:g_lipschitz} of $G_v$, to write
	 	\begin{multline*}
	 		\Bigg\lVert 
				\Dif A\sbr{\gamma\del{u_2}}\difdir{\gamma_u(u_2)h_1} - \Dif A\sbr{\gamma\del{\uop}}\difdir{\gamma_u(u_2)h_1} \\
					- \Dif^2 \!\! A\sbr{\gamma\del{\uop}}\difdir{\gamma_u\del{u_2}h_1, \gamma\del{u_2}-\gamma\del{\uop}}
			\Bigg\rVert_\Linf{\I} \\
			 = \normLt{h_1}{\I}\littleo{\normSZ{\gamma\del{u_2} - \gamma\del{\uop}}1\infty{\I}}
				   = \normLt{h_1}{\I} \littleo{\normLt{h_2}{\I}}.
		\end{multline*} 
	Since $\normSZ{y(u_2)+ v}1p\Omega \lesssim \normSZ{v}1p\Omega$, this implies \eqref{eq:term_little_o}.

	\item Term $T_7$: We proceed as with $T_6$, now appealing to \eqref{eq:A_unif_bound} and the Fr\'echet differentiability of $\gamma$ at $\uop$ (\autoref{thm:g_diff_u}), to obtain
		\begin{align*}
		\normSZ{\gamma_u(u_2)h_1}1\infty{\I} 
			\normSZ{\gamma(u_2) - \gamma(\uop) - \gamma_u(\uop)h_2}1\infty{\I}
				&= \normLt{h_1}{\I} \littleo{\normLt{h_2}{\I}},
		\end{align*}
	whence \eqref{eq:term_little_o}.
	
	\item Term $T_8$: We employ the Lipschitz property \eqref{eq:g_twice_diff_u} of $G_v'$ to write 
		\[
			\normSZ{\gamma_u(u_2)h_1 - \gamma_u(\uop)h_1}1\infty{\I}				\lesssim \normLt{h_1}{\I} \normLt{h_2}{\I}.
		\]
	The desired bound \eqref{eq:term_little_o} follows from 
	$\normSZ{\gamma_u\del{\uop} h_2}1\infty{\I} \lesssim \normLt{h_2}{\I}$.
	
	\item Term $T_9$: We use the Lipschitz property \eqref{eq:g_lipschitz} of $G_v$,
		\[
			\normSZ{y(u_2)-y(\uop)}1p\Omega \lesssim \normLt{h_2}{\I},
		\]
	 together with $\normSZ{\gamma_u\del{\uop} h_j}1\infty{\I} \lesssim \normLt{h_j}{\I}$ to deduce \eqref{eq:term_little_o}.
\end{enumerate}
Altogether, this concludes the proof.
\end{proof}

We next state without proof that the second-order Fr\'echet 
derivative $G_v''$ of the control-to-state map $G_v$
is Lipschitz continuous; the proof is similar to that of
\thmref{thm:g_lipschitz} and is thus omitted. We need this result
later in Corollary \ref{cor:quad_growth_cond}.
\begin{prop}[Lipschitz continuity of $G_v''$]\label{prop:guu_Lipschitz} There exists a constant 
$L_{G''} > 0$, such that for every $u_1, u_2 \in \calU$
	\begin{equation}\label{eq:guu_Lipschitz}
		\sup_{0\neq h_1,h_2 \in L^2(\I)}\frac{\norm{G''_v(u_1)h_1 h_2 - G''_v(u_2)h_1h_2}_{\bbW^1}}{\normLt{h_1}{\I}\normLt{h_2}{\I}} \leq L_{G''}\normLt{u_1-u_2}{\I}.
	\end{equation}
\end{prop}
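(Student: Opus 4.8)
The plan is to mirror the proof of \thmref{thm:g_lipschitz}, now applied to the \emph{linear} problem \eqref{eq:g_nd_diff} satisfied by the second derivative, and to reduce everything to an application of Corollary~\ref{cor:g_diff_prelim}. Fix $u_1,u_2\in\calU$ and $h_1,h_2\in L^2(\I)$, and write $\del{\gop_i,\yop_i}:=G_v(u_i)$, $\del{\gamma_j^i,y_j^i}:=G_v'(u_i)h_j$ and $\del{\gamma^i,y^i}:=G_v''(u_i)h_1h_2$ for $i,j\in\set{1,2}$. By \thmref{thm:g_twice_diff_u} each pair $\del{\gamma^i,y^i}$ solves \eqref{eq:g_nd_diff} with base point $\del{\gop_i,\yop_i}$ and right-hand side $F_\Omega^i$ of the form \eqref{eq:F_Omega}. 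Setting $\del{\delta\gamma,\delta y}:=\del{\gamma^1-\gamma^2,\,y^1-y^2}$, the first step is to show that $\del{\delta\gamma,\delta y}$ itself satisfies the linear variational system \eqref{eq:g_diff_general} \emph{with the single fixed base point} $\del{\gop_1,\yop_1}$ and $F_\Gamma=0$, for a suitable $\delta F_\Omega\in\sob1q\Omega^*$. Once this is done, the combined estimate of Corollary~\ref{cor:g_diff_prelim} gives $\norm{\del{\delta\gamma,\delta y}}_{\bbW^1}\lesssim\normSD{\delta F_\Omega}1q\Omega$, so it only remains to bound $\normSD{\delta F_\Omega}1q\Omega$ by $C\,\normLt{u_1-u_2}{\I}\normLt{h_1}{\I}\normLt{h_2}{\I}$.

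To produce the system for $\del{\delta\gamma,\delta y}$ I would subtract the two instances of \eqref{eq:g_nd_diff}. The operator $\DO\sbr{\cdot;\gop_i,\yop_i}$ depends on the base point through $A\sbr{\gop_i}$, $\Dif A\sbr{\gop_i}$ and $\yop_i+v$, so I add and subtract $\DO\sbr{\del{\gamma^2,y^2},\cdot;\gop_1,\yop_1}$ to replace the $i=2$ operator by the $i=1$ operator acting on $\del{\delta\gamma,\delta y}$; the leftover discrepancy $\del{\DO\sbr{\cdot;\gop_1,\yop_1}-\DO\sbr{\cdot;\gop_2,\yop_2}}\del{\gamma^2,y^2}$ is moved into $\delta F_\Omega$. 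The remaining contribution to $\delta F_\Omega$ is $F_\Omega^1-F_\Omega^2$, which I expand term by term from \eqref{eq:F_Omega} and telescope by inserting intermediate terms, exactly as in the nine-term decomposition of \thmref{thm:g_twice_diff_u}, so that each resulting summand contains exactly one of the differences $A\sbr{\gop_1}-A\sbr{\gop_2}$, $\Dif A\sbr{\gop_1}-\Dif A\sbr{\gop_2}$, $\Dif^2 \!\! A\sbr{\gop_1}-\Dif^2 \!\! A\sbr{\gop_2}$, $\yop_1-\yop_2$, $\gamma_j^1-\gamma_j^2$ or $y_j^1-y_j^2$, multiplied by factors that are uniformly bounded.

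Each difference factor is then controlled by a single power of $\normLt{u_1-u_2}{\I}$: the Nemytskii differences of $A$, $\Dif A$ and $\Dif^2 \!\! A$ are Lipschitz in $\gamma$ (a consequence of the smoothness of $\varphi$ together with the state constraint \eqref{eq:state_const}, which keeps the arguments in a compact set away from the singularity), hence bounded by $\normSZ{\gop_1-\gop_2}1\infty{\I}\lesssim\normLt{u_1-u_2}{\I}$ through the Lipschitz continuity of $G_v$ (\thmref{thm:g_lipschitz}); the term $\yop_1-\yop_2$ is likewise $\lesssim\normLt{u_1-u_2}{\I}$; and $\gamma_j^1-\gamma_j^2$, $y_j^1-y_j^2$ are bounded by $\normLt{u_1-u_2}{\I}\normLt{h_j}{\I}$ via the Lipschitz continuity of $G_v'$ (\propref{prop:g_twice_diff_u}). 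The complementary bounded factors are the first derivatives $\normSZ{\gamma_j^i}1\infty{\I},\normSZ{y_j^i}1p\Omega\lesssim\normLt{h_j}{\I}$ from \eqref{eq:g_diff_u_estimate}, the second derivative $\normSZ{\gamma^2}1\infty{\I},\normSZ{y^2}1p\Omega\lesssim\normLt{h_1}{\I}\normLt{h_2}{\I}$ from \eqref{eq:g_nd_diff_estimate_gamma}--\eqref{eq:g_nd_diff_estimate_y}, and the uniformly bounded quantities $\normLi{A\sbr{\gop_i}}\Omega$, $\normSZ{\yop_i+v}1p\Omega$ coming from \eqref{eq:A_unif_bound} and membership in $\bbB_v$. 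Consequently every summand of $\delta F_\Omega$ is $\lesssim\normLt{u_1-u_2}{\I}\normLt{h_1}{\I}\normLt{h_2}{\I}$, and \eqref{eq:guu_Lipschitz} follows after dividing by $\normLt{h_1}{\I}\normLt{h_2}{\I}$ and taking the supremum.

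The main obstacle is organizational rather than conceptual: as in \thmref{thm:g_twice_diff_u}, expanding $\delta F_\Omega$ produces a large number of terms, and the delicate point is to insert the intermediate quantities so that \emph{exactly one} difference factor (a power of $u_1-u_2$, or a difference $G_v'(u_1)h_j-G_v'(u_2)h_j$) appears in each, while all other factors stay uniformly bounded. The only analytic input beyond the already-established Lipschitz estimates is the Lipschitz dependence of $\Dif^2 \!\! A$ on $\gamma$, which follows from \eqref{eq:op_RDA} and the smoothness of $\varphi$ under \eqref{eq:state_const}; everything else is a direct reuse of \thmref{thm:g_lipschitz}, \propref{prop:g_twice_diff_u} and Corollary~\ref{cor:g_diff_prelim}.
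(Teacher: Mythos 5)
Your proof is correct and takes the approach the paper intends: the paper omits this proof with the remark that it is similar to \thmref{thm:g_lipschitz}, and your argument --- subtract the two linear systems satisfied by $G''_v(u_i)h_1h_2$, recenter the operator $\DO$ at the single base point $\del{\gop_1,\yop_1}$, apply the stability estimate of Corollary~\ref{cor:g_diff_prelim} with $F_\Gamma=0$, and telescope $\delta F_\Omega$ so that each summand carries exactly one difference factor controlled by \thmref{thm:g_lipschitz} or \propref{prop:g_twice_diff_u} --- is precisely that strategy. One attribution is slightly off but harmless: the Lipschitz dependence of $\Dif^2\!\!A$ on $\gamma$ does not follow from \eqref{eq:op_RDA} (which gives differentiability of $\Dif A$, not Lipschitz continuity of its derivative), but rather, as you also note, from the smoothness of $\varphi$ on the compact set of arguments enforced by the state constraint \eqref{eq:state_const}.
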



\section{Optimal Control} \label{s:oc_rc}
Let us summarize what we have accomplished so far. We have formally derived the 
first-order necessary optimality conditions in \autoref{s:oc_lagrangian}. If 
$G_v$ denotes the control-to-state map, we have proved in \autoref{s:g_map} that
$G_v$ is well posed, i.e., there exists a unique weak solution to the state equations 
\eqref{eq:lin_st_const} for every $u\in \calU$ in \eqref{eq:open_ball}, and $v$ satisfying \eqref{eq:v_invariant}
and \eqref{eq:v_contraction}. As a crucial step forward we 
have shown that 
$G_v$ is twice Fr\'echet differentiable on $\calU$. 

This background work puts us in the position to show the existence and (local)
uniqueness of the optimal control $u$ solving the OC-FBP in \eqref{eq:lin_cost}-\eqref{eq:lin_st_const}.
We will achieve this result in three stages. We first show the existence of $u$ in 
Theorem~\ref{thm:oc_rc_existence} of \autoref{s:oc_rc_existence}. We next derive 
the first-order necessary optimality conditions and show the existence and uniqueness of the solution to the 
adjoint equations in \autoref{s:oc_rc_first}. Finally in \autoref{s:oc_rc_second} 
we end this voyage by proving the second-order sufficient conditions for the control $u$. 

\subsection{Existence of Optimal Control}
\label{s:oc_rc_existence}
In order to show the existence of a solution to our optimal control problem we
first rewrite
the cost functional $\calJ: \bbW^1 \times\Uad\to\bbR$ 
from \eqref{eq:lin_cost} in its reduced form. This is accomplished by utilizing the control-to-state map
$G_v$ from Section~\ref{s:g_map} as follows:
	\begin{align*}
		\calJ(u) &:= \calJ(G_v(u),u) = \calJ(\gamma,y,u) = \calJ_1(G_v(u)) + \calJ_2(u),
	\end{align*}
	with
	\begin{align*}
		\calJ_1(G_v(u)) &:= \frac{1}{2}\normLt{\gamma-\gamma_d}{\I}^2 
	        + \frac{1}{2}\normLt{(y+v-y_d)\sqrt{J_\gamma}}\Omega^2, 
		& \calJ_2(u) &:= \frac{\lambda}{2}\normLt{u}{\I}^2 .
	\end{align*}
Thus, after recalling that $\Uad$ is a closed subset of $\calU$, we obtain that
	\begin{align}\label{eq:lin_cost_reduced}
		\min_{u \in \Uad} \calJ(u)
	\end{align}
is an equivalent minimization problem to \eqref{eq:lin_cost}.

\begin{thm}[existence of optimal control] \label{thm:oc_rc_existence}
For every $v$ satisfying \eqref{eq:v_invariant} and \eqref{eq:v_contraction} there exists an optimal control 
$\uop\in \Uad$ minimizing
the cost functional \eqref{eq:lin_cost} with optimal state 
$(\gop,\yop) \in \left( \sob{1+1/q}{p}{{\I}} \cap \sobZ{1}{\infty}{{\I}} \right) \times \sobZ{1}{p}{\Omega}$ which
solves the free boundary problem \eqref{eq:lin_st_const} and satisfies the state 
constraint \eqref{eq:state_const}.
\end{thm}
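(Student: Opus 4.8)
The plan is to apply the direct method of the calculus of variations to the reduced problem \eqref{eq:lin_cost_reduced}. Since $\calJ\ge 0$, the infimum $j:=\inf_{u\in\Uad}\calJ(u)\ge 0$ is finite, so I would start from a minimizing sequence $\seq{u_n}\subset\Uad$ with $\calJ(u_n)\to j$. Because $\Uad$ is a closed ball in the Hilbert space $\Ltwo{\I}$ (Definition~\ref{defn:const_set}), it is bounded, convex and closed, hence weakly sequentially compact; I extract a subsequence (not relabeled) with $u_n\weakto\uop$ weakly in $\Ltwo{\I}$, and $\uop\in\Uad$ since a closed convex set is weakly closed. Note $\uop\in\calU$, so $G_v(\uop)$ is well defined by Corollary~\ref{cor:g_operator}.

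The crux is to secure enough compactness on the states $(\gamma_n,y_n):=G_v(u_n)$ to pass to the limit in the \emph{nonlinear} weak form \eqref{eq:g_var_pde}. Here the enhanced regularity Theorem~\ref{thm:g_interpolation} is essential. Since $u_n$ is bounded in $\Ltwo{\I}$ and $(\gamma_n,y_n)\in\bbB_v$, the forcing $f_n:=-A\sbr{\gamma_n}\grad(y_n+v)\cdot\nu+u_n$ is bounded uniformly in $\sob{-1/p}{p}{\I}$, so $\seq{\gamma_n}$ is bounded in $\sob{1+1/q}{p}{\I}$. In one dimension this space embeds compactly into $\sobZ1\infty{\I}$ (as $1+1/q>1+1/p$ for $q<p$), yielding a further subsequence with $\gamma_n\to\gop$ strongly in $\sob1\infty{\I}$; in particular $\totalD_{x_1}\gamma_n\to\totalD_{x_1}\gop$ uniformly. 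Simultaneously $\seq{y_n}$ is bounded in $\sobZ1p\Omega$, so $y_n\weakto\yop$ weakly there, and by Rellich--Kondrachov (recall $p>2$ and $\Omega\subset\bbR^2$) $y_n\to\yop$ strongly in $\Ltwo\Omega$.

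With these convergences I would pass to the limit in \eqref{eq:g_var_pde} term by term: $\BG\sbr{\gamma_n,\zeta}\to\BG\sbr{\gop,\zeta}$ from the uniform convergence of $\totalD_{x_1}\gamma_n$; $\pairSob{u_n,\zeta}{\infty}{1}{1}{\I}=\int_0^1 u_n\zeta\to\int_0^1\uop\zeta$ from weak $L^2$ convergence, since $\zeta\in\sobZ11{\I}\hookrightarrow\Linf\I\subset\Ltwo\I$; and for the bulk form I use that $A\sbr{\gamma_n}\to A\sbr{\gop}$ strongly in $\Linf{\Omega}^{2\times2}$ (continuity of the Nemytskii operator under the uniform convergence of $\gamma_n$ and $\totalD_{x_1}\gamma_n$) together with $\grad(y_n+v)\weakto\grad(\yop+v)$ weakly in $L^p$, so that $A\sbr{\gamma_n}\grad(y_n+v)\weakto A\sbr{\gop}\grad(\yop+v)$ weakly in $L^p$ and pairs against the fixed test gradient $\grad(z+E\zeta)\in L^q$. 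Hence $(\gop,\yop)$ solves \eqref{eq:g_var_pde} with control $\uop$, and uniqueness in Corollary~\ref{cor:g_operator} gives $(\gop,\yop)=G_v(\uop)$.

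It remains to verify optimality and the constraints. The strong $L^2$ convergence $y_n\to\yop$ and the uniform convergence $J_{\gamma_n}=1+\gamma_n\to 1+\gop=J_{\gop}$ (with $J_{\gop}\ge 1/2$) give the \emph{full} convergence $\calJ_1(G_v(u_n))\to\calJ_1(G_v(\uop))$; weak lower semicontinuity of the convex continuous map $u\mapsto\calJ_2(u)=\tfrac{\lambda}{2}\normLt{u}{\I}^2$ then yields $\calJ(\uop)\le\liminf\calJ(u_n)=j$, and since $\uop\in\Uad$ we conclude $\calJ(\uop)=j$, i.e. $\uop$ is optimal. The state constraint \eqref{eq:state_const} holds automatically because $G_v(\uop)\in\bbB_v$ forces $\abs{\totalD_{x_1}\gop}\le 1$ a.e. (see \eqref{eq:W1_convex_set}), and the claimed regularity $\gop\in\sob{1+1/q}{p}{\I}\cap\sobZ1\infty{\I}$ is precisely Theorem~\ref{thm:g_interpolation}. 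The main obstacle, and the step where the structure of the problem really matters, is the passage to the limit through the nonlinear coefficient $A\sbr{\gamma_n}$: weak-$*$ compactness in the non-reflexive space $\sobZ1\infty{\I}$ alone would not force $A\sbr{\gamma_n}\to A\sbr{\gop}$, and it is exactly the compact embedding furnished by the enhanced regularity that rescues the argument.
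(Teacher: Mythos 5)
Your proposal is correct and follows essentially the same route as the paper's own proof: direct method, weak sequential compactness of the closed convex ball $\Uad$, the enhanced regularity of Theorem~\ref{thm:g_interpolation} to get strong $\sobZ{1}{\infty}{\I}$-convergence of $\gamma_n$ via compact embedding, weak convergence of $y_n$ in $\sobZ{1}{p}{\Omega}$, passage to the limit in \eqref{eq:g_var_pde}, and weak lower semicontinuity of the cost. In fact you make explicit two steps the paper leaves implicit — the uniform $\sob{1+1/q}{p}{\I}$ bound behind the compactness and the term-by-term limit through the nonlinear coefficient $A\sbr{\gamma_n}$ — which is a welcome sharpening, not a deviation.
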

\begin{proof}
In order to show the existence of an optimal control we use the direct method of the calculus of variations. We first 
note that the cost functional $\calJ$ in \eqref{eq:lin_cost_reduced}
is bounded below by zero, whence $j = \inf_{u \in \Uad} \calJ(u)$ is finite. We thus construct a minimizing sequence
$\{ u_n \}_{n\in \mathbb{N}}$ such that
	\[
  		j = \lim_{n\rightarrow \infty} \calJ(u_n).
	\]  
By Definition~\ref{defn:const_set}, $\Uad$ is nonempty, closed, bounded
and convex in $\Ltwo{\I}$, thus weakly sequentially compact. Consequently,
we can extract a weakly convergent subsequence $\{ u_{n_k} \}_{k \in \mathbb{N}} \subset \Ltwo{\I}$, i.e.
	\[
     		u_{n_k} \weakto \uop  \quad \env{in } \Ltwo{\I}, \qquad \uop \in \Uad.
	\]
Here $\uop$ is our optimal control candidate. 

Henceforth, we drop the subindex $k$ when extracting subsequences. 
According to Corollary~\ref{cor:g_operator} and \eqref{eq:gamma_enhanced_regularity},
we let $G_v(u_n) = (\gamma_n,y_n)\in \left( \sob{1+1/q}{p}{{\I}} \cap \sobZ{1}{\infty}{{\I}} \right) \times \sobZ{1}{p}{\Omega}$ denote the unique state corresponding to $u_n$, thereby  solving the free boundary problem 
\eqref{eq:lin_st_const} and satisfying the state constraint \eqref{eq:state_const}. Since 
$\sob{1+1/q}{p}{{\I}} \cap \sobZ{1}{\infty}{{\I}} $ is compactly
embedded into $\sobZ{1}{\infty}{{\I}}$ for $p>2$,
the Rellich-Kondrachov theorem yields a strongly convergent subsequence 
$\{ \gamma_{n} \}_{n\in \mathbb{N}} \subset \sobZ{1}{\infty}{{\I}}$, i.e. 
	\[
     		\gamma_n \to  \gop \quad \env{in } \sobZ{1}{\infty}{{\I}}, \qquad \env{and} \qquad
     		y_n \rightharpoonup  \yop \quad \env{in } \sobZ{1}{p}{\Omega} .
	\]
Note that the limit pair $\del{\gop, \yop}$ is the state corresponding to the control $\uop$. This results from replacing 
$(\gamma,y)$ with $(\gamma_n,y_n)$
in the variational equation \eqref{eq:g_var_pde} taking the limit, and making use of the embedding $L^2({\I}) \subset \sob{-1}{\infty}{{\I}}$.

Finally, using the fact that $\calJ_2\del{u}$ is continuous in $L^2$ and convex, together with the strong convergence 
$\del{\gamma_n, y_n} \rightarrow \del{\gop, \yop}$ in $L^\infty\del{{\I}}\times L^\infty\del{\Omega} $, 
again due to Rellich-Kondrachov theorem, it follows that $\calJ$ is weakly lower semicontinuous, whence
	\begin{align*}
    		\inf_{u\in\Uad} \calJ\del{u} 
			= \liminf_{n\to\infty} \del{\calJ_1(G_v(u_n)) + \calJ_2(u_n)}
			\ge \calJ_1(G_v(\uop)) + \calJ_2(\uop) 
		      	= \calJ\del{\uop}.
	\end{align*}
This concludes the proof.
\end{proof}

\subsection{First-order Necessary Condition}
\label{s:oc_rc_first}
We start with a classical result \cite{FTroltzsch_2010a}. 
\begin{lem}[variational inequality] 
If $\uop \in \Uad$ denotes an optimal control, given by Theorem~\ref{thm:oc_rc_existence}, then the 
first order necessary optimality condition satisfied by $\uop$ is
\begin{align}  \label{eq:var_ineq}
       \pairLt{\mathcal{J}'(\uop), u - \uop}{\I}  &\ge 0  \quad  \forall u \in \Uad .
\end{align}
\end{lem}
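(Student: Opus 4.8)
The plan is to derive the variational inequality as the standard first-order necessary condition for minimizing a Fréchet-differentiable functional over a convex admissible set. The key structural facts I would assemble first are: $\Uad$ is convex (it is a closed ball in $\Ltwo{\I}$, cf.\ Definition~\ref{defn:const_set}); the reduced cost $\calJ:\calU\to\bbR$ is Fréchet differentiable at $\uop$ because $\calJ_2(u)=\tfrac{\lambda}{2}\normLt{u}{\I}^2$ is obviously smooth in $L^2$, and $\calJ_1\circ G_v$ is a composition of the smooth quadratic tracking terms in \eqref{eq:lin_cost} with the map $G_v$, which was shown to be (twice) Fréchet differentiable in \thmref{thm:g_diff_u}. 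By the chain rule, $\calJ'(\uop)\in\mathcal{L}(L^2(\I),\bbR)$ exists and may be identified with an element of $\Ltwo{\I}$ via the Riesz representation.

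Next I would carry out the convex-combination argument. Fix an arbitrary $u\in\Uad$ and, for $t\in(0,1]$, form $u_t:=\uop+t(u-\uop)=(1-t)\uop+tu$. Since $\Uad$ is convex, $u_t\in\Uad$, and since $\uop$ minimizes $\calJ$ over $\Uad$ (Theorem~\ref{thm:oc_rc_existence}), we have $\calJ(u_t)\ge\calJ(\uop)$, i.e.
\begin{align*}
  \calJ(\uop + t(u-\uop)) - \calJ(\uop) \ge 0 \qquad \forall t\in(0,1].
\end{align*}
Dividing by $t>0$ and letting $t\downarrow 0$, the Fréchet differentiability of $\calJ$ at $\uop$ gives
\begin{align*}
  \lim_{t\downarrow 0}\frac{\calJ(\uop+t(u-\uop))-\calJ(\uop)}{t}
     = \pairLt{\calJ'(\uop), u-\uop}{\I},
\end{align*}
so the nonnegativity of the difference quotients passes to the limit and yields $\pairLt{\calJ'(\uop),u-\uop}{\I}\ge 0$. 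Because $u\in\Uad$ was arbitrary, this is exactly \eqref{eq:var_ineq}.

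The one point requiring a little care—rather than being a genuine obstacle—is ensuring the directional derivative is well defined and equals the Gâteaux derivative along the admissible direction. This is guaranteed by the established Fréchet differentiability of $G_v$ on the \emph{open} ball $\calU$: since $\uop\in\Uad\subset\calU$ and $\calU$ is open in $\Ltwo{\I}$, the whole segment $u_t$ lies in $\calU$ for small $t$, so $G_v(u_t)$ and hence $\calJ(u_t)$ are defined, and the limit above is legitimate. The remaining verification that $\calJ'(\uop)$ indeed exists as a bounded linear functional is routine: it combines the linearity and boundedness of $G_v'(\uop)$ from \thmref{thm:g_diff_u} with differentiation of the quadratic norms, so I would state it as an application of the chain rule rather than grinding through the explicit adjoint-based representation (which is the subject of the subsequent subsection). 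Thus the proof is essentially the classical argument of \cite[Lemma 2.21]{FTroltzsch_2010a}, and I do not anticipate any substantive difficulty.
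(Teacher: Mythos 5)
Your proof is correct and coincides with what the paper intends: the paper offers no proof of this lemma at all, presenting it as a classical result with a citation to \cite{FTroltzsch_2010a}, and your convex-combination argument (using convexity of $\Uad$, optimality of $\uop$, and the Fr\'echet differentiability of $\calJ$ inherited from \thmref{thm:g_diff_u} on the open ball $\calU \supset \Uad$) is exactly that classical argument, e.g.\ Lemma 2.21 of \cite{FTroltzsch_2010a}. No gaps; the points you flag for care (well-definedness of $\calJ(u_t)$ on the segment and existence of $\calJ'(\uop)$ as a bounded functional) are handled correctly.
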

	We will show that the variational inequality \eqref{eq:var_ineq} is the same as 
\eqref{eq:lin_uop_var_ineq} as well as prove that \eqref{eq:lin_rop} and \eqref{eq:lin_sop}
are the correct adjoint equations. This furnishes a rigorous
derivation of the formal results of section \ref{s:oc_lagrangian}.

Since the set $\Uad$ defined in
\eqref{eq:v_u_const_set} is closed, we need to deal with a suitable 
set of admissible directions.
\begin{defn}[admissible directions] \label{def:u_convex_cone}
Given $u \in \Uad$, the convex cone $\cone{u}$ comprises all directions 
$h \in \Ltwo{{\I}}$ such that $u+th \in \Uad$ for some $t>0$, i.e.,
	\[
    	\cone{u} := \set{h \in \Ltwo{{\I}}:\; u+th \in \mathcal
          U_{ad}, t>0}.
	\]    
\end{defn}
\begin{thm}[first-order conditions] \label{thm:oc_rc_first}
If $\uop \in \Uad$ denotes an optimal control of OC-FBP, then the first-order necessary optimality conditions are 
given by \eqref{eq:lin_rop}, \eqref{eq:lin_sop} and \eqref{eq:lin_uop_var_ineq}. 
\end{thm}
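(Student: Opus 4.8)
The plan is to compute the reduced gradient $\calJ'(\uop)$, represent it through the adjoint state $\sop$ in the form $\lambda\uop+\sop$, and feed the result into the variational inequality \eqref{eq:var_ineq}. Since $G_v$ is Fr\'echet differentiable by Theorem~\ref{thm:g_diff_u} and $\calJ_1,\calJ_2$ are smooth quadratic functionals, the chain rule gives, for every $h\in\Ltwo{\I}$ and $\del{\gamma,y}:=G_v'(\uop)h$,
\[
	\calJ'(\uop)h = \calJ_1'(\gop,\yop)\del{\gamma,y} + \lambda\pairLt{\uop,h}{\I},
\]
where a direct differentiation of $\calJ_1$ yields
\begin{align*}
	\calJ_1'(\gop,\yop)\del{\gamma,y}
		&= \pairLt{\gop-\gamma_d,\gamma}{\I}
		+ \int_\Omega \del{\yop+v-y_d}\del{1+\gop}\,y \dif x \\
		&\quad + \frac12\int_\Omega \abs{\yop+v-y_d}^2\gamma \dif x .
\end{align*}
The three terms on the right are precisely the data that drive the formal adjoint equations \eqref{eq:lin_rop} and \eqref{eq:lin_sop}; this structural match is what makes the adjoint substitution succeed.

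Next I would establish well-posedness of the coupled adjoint system \eqref{eq:lin_rop}--\eqref{eq:lin_sop} for the pair $(\rop,\sop)\in\sob1q\Omega\times\sobZ11{\I}$. This system is the transpose of the linearized state operator of \eqref{eq:g_diff_general}, so it is handled by the machinery already in place: for a prescribed boundary datum $\rop\restriction_\Gamma=\sop$, $\rop\restriction_\Sigma=0$, existence, uniqueness and the a~priori bound for $\rop$ follow from the $\inf$-$\sup$ condition \eqref{eq:b2_infsup} and the Banach--Ne\v{c}as theorem on the reflexive pair $\sobZ1p\Omega\times\sobZ1q\Omega$, whereas the existence, uniqueness and bound for $\sop$ follow from the transpose $\inf$-$\sup$ condition \eqref{eq:b1_infsup_b} together with the non-reflexive embedding argument of Lemma~\ref{lem:baneso}. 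The coupling between $\rop$ and $\sop$ is then resolved by a contraction argument on $\sob1q\Omega\times\sobZ11{\I}$ identical in spirit to Theorem~\ref{thm:t_contraction} and Theorem~\ref{thm:t_contraction2}, exploiting the smallness of $v$ in \eqref{eq:v_contraction}.

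With $(\rop,\sop)$ in hand I would derive the adjoint representation of the gradient. Writing $\rop=\rop_0+E\sop$ with $\rop_0\in\sobZ1q\Omega$ and testing the linearized state equation \eqref{eq:g_diff_u} (for $\del{\gamma,y}=G_v'(\uop)h$) against $(\zeta,z)=(\sop,\rop_0)$ produces, since $z+E\zeta=\rop$,
\[
	\BG\sbr{\gamma,\sop} + \DO\sbr{\del{\gamma,y},\rop;\gop,\yop} = \pairLt{\sop,h}{\I}.
\]
Testing the weak form of \eqref{eq:lin_rop} against $y\in\sobZ1p\Omega$ and the weak form of \eqref{eq:lin_sop} against $\gamma\in\sobZ1\infty{\I}$ expresses $\BO\sbr{y,\rop;A\sbr\gop}$ and $\BG\sbr{\gamma,\sop}$ in terms of the data of $\calJ_1'$; substituting these into the identity above, the two contributions carrying the linearization \eqref{eq:op_DA} of $A$ cancel and what remains is exactly
\[
	\calJ_1'(\gop,\yop)\del{\gamma,y} = \pairLt{\sop,h}{\I}.
\]
Combined with the chain rule this gives $\calJ'(\uop)h=\pairLt{\lambda\uop+\sop,h}{\I}$, so \eqref{eq:var_ineq} becomes verbatim the variational inequality \eqref{eq:lin_uop_var_ineq}. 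Since $(\gop,\yop)=G_v(\uop)$ already solves \eqref{eq:lin_st_const}, this identifies \eqref{eq:lin_rop}, \eqref{eq:lin_sop} and \eqref{eq:lin_uop_var_ineq} as the rigorous first-order necessary conditions.

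The main obstacle I anticipate is the well-posedness of the adjoint system in the low-regularity, non-reflexive setting ($\sop\in\sobZ11{\I}$, $\rop\in\sob1q\Omega$ with $1<q<2$). In particular one must check that the right-hand side of the $\sop$-equation, which contains $\totalD_{x_1}\del{\int_0^1 A_2\sbr\gop\grad\del{\yop+v}\cdot\grad\rop\dif x_2}$ with $\grad\rop\in L^q$ and $\grad\del{\yop+v}\in L^p$, defines a bounded functional on $\sobZ1\infty{\I}$. This is exactly where pairing against Lipschitz test functions $\zeta$ (so that $\totalD_{x_1}\zeta\in\Linf{\I}$ meets an $L^1$ product of gradients) is indispensable, and where \eqref{eq:b1_infsup_b} and Lemma~\ref{lem:baneso} carry the analysis.
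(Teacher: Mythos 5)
Your proposal follows essentially the same route as the paper: the chain rule for the reduced cost, the substitution of the adjoint pair $(\rop,\sop)$ into $\calJ_1'$ via the linearized state equation \eqref{eq:g_diff_u} tested with $(\zeta,z)=(\sop,\rop-E\sop)$, and the resulting identity $\calJ'(\uop)h=\pairLt{\lambda\uop+\sop,h}{\I}$ fed into \eqref{eq:var_ineq} to recover \eqref{eq:lin_uop_var_ineq}. Your cancellation of the two terms carrying $\Dif A\sbr\gop\difdir{\gamma}$ is exactly the paper's computation read in the opposite order, and your contraction argument for the coupled adjoint system mirrors the paper's Lemma~\ref{lem:t_adj_maps_back} and Theorem~\ref{thm:t_adj_contraction}.

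One step in your well-posedness argument does not go through as stated: you invoke Lemma~\ref{lem:baneso} for the existence of $\sop\in\sobZ11{\I}$, but that lemma produces the unknown in $\sobZ1\infty{\I}$ tested against $\sobZ11{\I}$, whereas the $\sop$-equation is the \emph{transposed} problem, with the unknown in the non-reflexive space $\sobZ11{\I}$ and Lipschitz test functions. The inf-sup condition \eqref{eq:b1_infsup_b} yields uniqueness and the a~priori bound for $\sop$, but in a non-reflexive target space it cannot by itself yield existence (there is no weak or weak-$*$ compactness to exploit, and no duality argument applies). The paper resolves this in Lemma~\ref{lem:t_adj_maps_back}(i) by exploiting the one-dimensional structure: after checking via Fubini and H\"older that $f_0$ and $f_1$ in \eqref{eq:f0_f1_adj} are integrable, it constructs $\sop$ \emph{explicitly} as an antiderivative of an $\Lone{\I}$ function, using the characterization of absolutely continuous functions vanishing at the endpoints, and only then uses \eqref{eq:b1_infsup_b} for uniqueness and the bound. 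With that substitution, the rest of your argument is sound and coincides with the paper's.
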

\begin{proof}
We can infer that $\mathcal J$ is Fr\'echet differentiable by recalling from 
Theorems~\ref{thm:g_diff_u} and \ref{thm:g_twice_diff_u} that $G_v$ is twice differentiable and 
that $\calJ_1$ is quadratic. In fact, the Fr\'echet derivative of $\calJ$ in 
\eqref{eq:lin_cost_reduced} at $\uop$ in a direction $h \in \cone{\uop}$ is
\begin{align*}
	\calJ'(\uop) h &= \calJ_1'(G_v(\uop))G_v'(\uop) h + \calJ_2'(\uop) h  
                     = \calJ_1'(G_v(\uop)) (\gamma_u(\uop)h, y_u(\uop)h) 
                     + \calJ_2'(\uop) h ,
\end{align*}
where $G_v'(\uop) h = (\gamma_u(\uop)h, y_u(\uop)h)$ 
satisfies \eqref{eq:g_diff_u} and 
	\begin{equation}\begin{aligned} \label{eq:red_cost_first_deriv}
		\calJ'(\uop) h
			&=  \pairLt{   \del{\yop+v-y_d}\del{1+\gop}, y_u(\uop)h}{\Omega} \\ 
            &\quad +  \Big\langle{\gop-\gamma_d+
              \frac{1}{2}\int_0^1\abs{\yop+v-y_d}^2 \dif x_2,\gamma_u(\uop)h}\Big\rangle_{L^2(\I)\times L^2(\I)} \\
	        &\quad + \lambda \pairLt{\uop, h}{\I} . 
	\end{aligned}\end{equation}
Introducing the adjoint states $(\rop,\sop) \in \sob{1}{q}{\Omega} \times \sobZ{1}{1}{{\I}}$, which satisfy the system 
\eqref{eq:lin_rop}-\eqref{eq:lin_sop} in weak form, and noting that $h \in L^2({\I})$, we obtain
\begin{align*}
	\mathcal{J}'(\uop) h 
		&= \BG\sbr{\gamma_u(\uop)h,\sop} + \DO\sbr{\del{\gamma_u(\uop)h, y_u(\uop)h},\rop; \gop, \yop} + \lambda \pairLt{\uop, h}{\I}. 
\end{align*}
Utilizing \eqref{eq:g_diff_u} with $\zeta = \sop$ and $z = \rop$, we arrive at
\begin{align*}
	\mathcal{J}'(\uop) h	
    		&= \pairLt{\sop + \lambda \uop, h}{\I}
			+ \DO\sbr{\del{\gamma_u(\uop)h, y_u(\uop)h},\rop - E \sop;\gop, \yop}.
\end{align*}
Since the Dirichlet 
condition $\rop\restriction_\Gamma = \sop$ implies $\rop - E \sop \in \sobZ{1}{q}{\Omega}$,
\eqref{eq:g_diff_u}  with $\zeta = 0$ and $z \in \sobZ1q\Omega$ yields 
$\DO\sbr{\del{\gamma_u(\uop)h}, y_u(\uop)h, z; \gop, \yop} = 0$, whence
\begin{align*}	
     \mathcal{J}'(\uop) h				 
        		&= \pairLt{\sop + \lambda \uop, h}{\I}.
\end{align*}
In view of \eqref{eq:var_ineq}, this coincides with \eqref{eq:lin_uop_var_ineq} for $h = u - \uop$ admissible. 
\end{proof}

\subsubsection{Well-posedness of the Adjoint System}
\label{s:oc_rc_adjoint}
Before we dwell upon the second-order sufficient optimality conditions we put together the last 
piece of the puzzle: the well-posedness of the adjoint system 
\eqref{eq:lin_rop} and \eqref{eq:lin_sop}. This will be done using a contraction argument in 
Banach spaces, assuming that we have a solution $\del{\gop,\yop} \in \bbB_v$ 
to the state equations in \eqref{eq:lin_st_const} satisfying \propref{prop:b_infsup} and \lemref{lem:t_maps_weps}.

Let $\bbV := \set{r \in \sob{1}{q}{\Omega}\,:\, r\restriction_\Gamma \in \sobZ{1}{1}{{\Gamma}},\, r\restriction_\Sigma = 0}$, 
and let the operator
$T_1:\bbV \to \sobZ 1 1 {\I}$ be defined as $\stl = T_1(r)$ where $\stl$ satisfies for every $\zeta \in \sobZ 1 \infty {\I}$ 
\begin{align}\label{eq:t1_adj_var}
	\BG\sbr{\zeta,\stl} &= \pair{\zeta, {\bf f}\sbr{\gop,\yop,r}}_{\sobZ{1}{\infty}{{\I}},
	\sob{-1}{1}{{\I}}},
\end{align}
with 
(recall the identification of $\Gamma$ with $\I$ as well as that 
of Sobolev spaces on them)
	\begin{align*}
	     &\pair{\zeta,\bf f}_{\sobZ 1 \infty {\I},\sob{-1}{1}{{\I}}} \\
	     &  \quad\quad\quad\quad := \pair{\zeta, f_0\sbr{x_1;\gop,\yop,r}}_{\Linf{\I}, \Lone{\I}} 
				+ \pair{\totalD_{x_1}\zeta,f_1\sbr{x_1;\gop,\yop,r}}_{\Linf{\I},\Lone{\I}}  
	\end{align*}			
and			
\begin{align}	
    	f_0[\cdot;\gop, \yop, r] &:=  \gop - \gamma_d +
    	 \frac{1}{2}\int_0^1\abs{\yop+v-y_d}^2 \dif x_2 
    	 - \int_0^1 A_1\sbr\gop \grad \del{\yop+v} \cdot \grad r\dif x_2, \nonumber \\
    	f_1[\cdot;\gop, \yop, r] &:=  - \int_0^1 A_2\sbr\gop \grad \del{\yop+v} \cdot \grad r\dif x_2 	\label{eq:f0_f1_adj},
	\end{align}
where $A_1\sbr\gop,A_2\sbr\gop$ are defined in \eqref{eq:op_DA}. 
Given $\stl \in \sobZ11{\I}$, let $T_2:\sobZ 1 1 {\I} \to \bbV$ 
be the operator defined as 
$\rtl = T_2(\stl) \in E \stl + \sobZ 1 q \Omega$ satisfying
\begin{equation}\label{eq:t2_adj_var}
	\BO\sbr{z,\rtl;A\sbr{\gop}} =  \pairLpq{z,\del{\yop +v - y_d}
          \del{1+\gop}}\Omega  
	  \qquad \forall z \in \sobZ{1}{p}{\Omega} .
\end{equation}

\begin{lem}[ranges of $T_1$ and $T_2$] \label{lem:t_adj_maps_back}
Let $T_1$, $T_2$ be defined in \eqref{eq:t1_adj_var} and \eqref{eq:t2_adj_var}. If
$\del{\gop,\yop} \in \bbB_v$, the ball
defined in \eqref{eq:W1_convex_set}, then
\begin{enumerate}[(i)]
\item\label{item:t_adj_maps_back_i}
	for all $r \in \bbV$, 
        the solution $\stl = T_1(r)$ to \eqref{eq:t1_adj_var} satisfies
	\begin{align*}
		\normSZ{\stl}{1}{1}{{\I}} 
			&\le  \alpha \Big( \norm{\gop - \gamma_d}_\Lone{{\I}} \\
                        &\quad + \frac{1}{2}\normLt{\yop+v-y_d}\Omega^2  
			    +  C_A\del{1+\beta C_A}\normSZ{v}1p\Omega\normSZ{r}{1}{q}{\Omega}  \Big) ;
    \end{align*}   
\item\label{item:t_adj_maps_back_ii} 
	for all $\stl \in \sobZ{1}{1}{{\I}}$, the solution $\rtl = T_2\del{\stl}$ to \eqref{eq:t2_adj_var} satisfies
    \[
    	\normSZ{\rtl}{1}{q}{\Omega} 
    		\le  2\beta\del{ \norm{\yop + v - y_d}_{\Lq\Omega}
                    + C_E C_A \normSZ{\stl}{1}{1}{{\I}}}  .
    \] 
\end{enumerate}
\end{lem}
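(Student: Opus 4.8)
The plan is to handle the two operators independently, in each case reducing the stated a priori estimate to a duality bound furnished by the $\inf$-$\sup$ conditions of Proposition~\ref{prop:b_infsup}. For part (\ref{item:t_adj_maps_back_i}), I would start from \eqref{eq:b1_infsup_b} applied to $\stl = T_1(r)$ and use the defining identity \eqref{eq:t1_adj_var} to write
\[
	\normSZ{\stl}{1}{1}{\I} \le \alpha\sup_{0\neq\zeta\in\sobZ1\infty{\I}} \frac{\BG\sbr{\zeta,\stl}}{\normSZ{\zeta}1\infty{\I}}
		= \alpha\sup_{0\neq\zeta\in\sobZ1\infty{\I}} \frac{\pair{\zeta,{\bf f}\sbr{\gop,\yop,r}}}{\normSZ{\zeta}1\infty{\I}}.
\]
It then remains to bound the pairing. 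Since $\zeta$ vanishes at the endpoints, $\normLi{\zeta}{\I} \le \normSZ{\zeta}1\infty{\I}$, so that $\zeta$ and $\totalD_{x_1}\zeta$ are both controlled in $L^\infty(\I)$ by $\normSZ{\zeta}1\infty{\I}$; a triangle inequality then separates the contributions of $f_0$ and $f_1$ from \eqref{eq:f0_f1_adj}. The first two summands of $f_0$ are immediate: the $\gop-\gamma_d$ term gives $\normLo{\gop-\gamma_d}{\I}$, and the quadratic term gives $\tfrac12\normLt{\yop+v-y_d}\Omega^2$ after Fubini.

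The key observation for the remaining $r$-dependent parts of $f_0$ and $f_1$ is that they reassemble into one bulk form. Indeed, using $\Dif A\sbr\gop\difdir\zeta = A_1\sbr\gop\zeta + A_2\sbr\gop\totalD_{x_1}\zeta$ from \eqref{eq:op_DA}, one checks
\[
	-\int_0^1\!\!\zeta\!\int_0^1\!\! A_1\sbr\gop\grad\del{\yop+v}\cdot\grad r\,\dif x_2 -\int_0^1\!\!\totalD_{x_1}\zeta\!\int_0^1\!\! A_2\sbr\gop\grad\del{\yop+v}\cdot\grad r\,\dif x_2 = -\BO\sbr{\yop+v,\, r;\, \Dif A\sbr\gop\difdir\zeta}.
\]
Continuity of $\BO$, the bound $\normLi{\Dif A\sbr\gop\difdir\zeta}\Omega \le C_A\normSZ{\zeta}1\infty{\I}$ from \eqref{eq:A_unif_bound}, and the membership $\del{\gop,\yop}\in\bbB_v$ (which yields $\normSZ{\yop+v}1p\Omega \le \del{1+\beta C_A}\normSZ{v}1p\Omega$) together produce the third summand $C_A\del{1+\beta C_A}\normSZ{v}1p\Omega\normSZ{r}1q\Omega$. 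Collecting the three contributions and dividing by $\normSZ{\zeta}1\infty{\I}$ gives exactly (\ref{item:t_adj_maps_back_i}).

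For part (\ref{item:t_adj_maps_back_ii}), since $\rtl$ is only affine ($\rtl\in E\stl + \sobZ1q\Omega$), I would split $\rtl = E\stl + \rho$ with $\rho\in\sobZ1q\Omega$. The extension obeys $\normSZ{E\stl}1q\Omega \le C_E\normSZ{\stl}11{\I}$ by \eqref{eq:w1_wq_ext}. For $\rho$ I would invoke the transpose $\inf$-$\sup$ attached to \eqref{eq:b2_infsup}: because $\BO\sbr{\cdot,\cdot;A\sbr\gop}$ is symmetric and $\sobZ1p\Omega,\sobZ1q\Omega$ are reflexive, the Banach--Ne\v{c}as theorem \cite[Theorem 2.6]{AErn_JLGuermond_2004a} delivers the same constant $\beta$ in the adjoint direction, so $\normSZ{\rho}1q\Omega \le \beta\sup_{0\neq z}\BO\sbr{z,\rho;A\sbr\gop}/\normSZ{z}1p\Omega$. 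By \eqref{eq:t2_adj_var} the numerator equals $\pairLpq{z,\del{\yop+v-y_d}\del{1+\gop}}\Omega - \BO\sbr{z,E\stl;A\sbr\gop}$; the first term I bound by Hölder and the state constraint $\abs{\gop}\le1/2$ (so $\normLi{1+\gop}{\I}\le 3/2$) plus Poincar\'e, the second by $C_E C_A\normSZ{z}1p\Omega\normSZ{\stl}11{\I}$ via \eqref{eq:A_unif_bound} and \eqref{eq:w1_wq_ext}. Adding $\normSZ{E\stl}1q\Omega$ back and using that continuity together with \eqref{eq:b2_infsup} forces $\beta C_A\ge1$ lets all geometric constants collapse into the prefactor $2$, giving (\ref{item:t_adj_maps_back_ii}).

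The main obstacle I anticipate is the recombination step in part (\ref{item:t_adj_maps_back_i}): recognizing that the $A_1$- and $A_2$-blocks of $f_0,f_1$ fuse into $\BO\sbr{\yop+v,r;\Dif A\sbr\gop\difdir\zeta}$ is what allows the single uniform bound \eqref{eq:A_unif_bound} on $\Dif A$ to close the estimate cleanly, whereas a termwise treatment would force one to bound the $A_1,A_2$ blocks separately. A secondary point is justifying the transpose $\inf$-$\sup$ with the same constant $\beta$ in part (\ref{item:t_adj_maps_back_ii}) and tracking which geometric constants the factor $2$ absorbs.
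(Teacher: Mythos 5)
Your a priori estimates are correct, and for the bounds themselves you follow essentially the paper's route (the $\inf$-$\sup$ conditions of Proposition~\ref{prop:b_infsup}). In part (\ref{item:t_adj_maps_back_i}) your recombination $A_1\sbr\gop\zeta + A_2\sbr\gop\totalD_{x_1}\zeta = \Dif A\sbr\gop\difdir{\zeta}$, which turns the two $r$-dependent pieces of $f_0,f_1$ into the single term $\BO\sbr{\yop+v,r;\Dif A\sbr\gop\difdir{\zeta}}$, is exactly what produces the stated constant $C_A\del{1+\beta C_A}\normSZ{v}1p\Omega\normSZ{r}1q\Omega$ with no spurious factor $2$; and in part (\ref{item:t_adj_maps_back_ii}) your bookkeeping, including the observation that continuity plus \eqref{eq:b2_infsup} forces $\beta C_A\ge 1$ so the extension term is absorbed into the prefactor $2$, is sound and consistent with the paper's (much terser) argument via \eqref{eq:b2_infsup}, \eqref{eq:A_unif_bound}, Banach--Ne\v{c}as and Poincar\'e.

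The genuine gap is in part (\ref{item:t_adj_maps_back_i}): you never show that $T_1(r)$ exists, and that is the heart of the paper's proof of this lemma. The pair $\del{\sobZ1\infty{\I},\sobZ11{\I}}$ is non-reflexive, so the one-sided $\inf$-$\sup$ \eqref{eq:b1_infsup_b} yields uniqueness and the a priori bound for a solution of \eqref{eq:t1_adj_var} but not its existence: Banach--Ne\v{c}as is unavailable, and Lax--Milgram in $\sobZ12{\I}$ fails as well, because the right-hand side pairs $f_1\in\Lone{\I}$ against $\totalD_{x_1}\zeta$ and is therefore not a bounded functional on $\sobZ12{\I}$. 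Lemma~\ref{lem:baneso} does not rescue you either, since it produces solutions in the $\sobZ1\infty$ slot, i.e., with trial and test spaces transposed relative to \eqref{eq:t1_adj_var}. The paper closes this by a one-dimensional constructive argument: any $\stl\in\sobZ11{\I}$ with $\stl(0)=\stl(1)=0$ is absolutely continuous and of the form $\stl(x_1)=\int_0^{x_1}\del{g(t)-\int_0^1 g(\tau)\dif\tau}\dif t$ for some $g\in\Lone{\I}$, and the explicit choice $g(t)=\kappa^{-1}\del{f_1\sbr{t;\gop,\yop,r}-\int_0^t f_0\sbr{\tau;\gop,\yop,r}\dif\tau}$ satisfies \eqref{eq:t1_adj_var}; the $L^1$ bounds on $f_0$ and $f_1$ (essentially the computations you already carry out for the estimate) certify $g\in\Lone{\I}$. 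So the missing existence step costs nothing beyond what you have estimated, but it rests on an idea --- the explicit integration of the two-point boundary value problem with $L^1$ data --- that your proposal does not contain.
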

\begin{proof}
Using \eqref{eq:b2_infsup} of Proposition~\ref{prop:b_infsup}, and
applying Banach-Ne\v{c}as theorem 
\cite{AErn_JLGuermond_2004a}, there exists a unique solution $\rtl$ to \eqref{eq:t2_adj_var}. 
Estimate (\ref{item:t_adj_maps_back_ii}) follows from \eqref{eq:b2_infsup} and \eqref{eq:A_unif_bound}, as well as 
the Poincar\'e inequality $\norm{z}_{L^p(\Omega)} \le \normSZ{z}{1}{p}{\Omega}$ for the unit square. 

In order to show the existence of solution to \eqref{eq:t1_adj_var}  we note that we are looking for an absolutely 
continuous function $\widetilde{s}$ on the interval $\I=\intoo{0,1}$ with zero Dirichlet values. Therefore, by the characterization of 
such functions in $\bbR$ \cite[Theorem 5.14]{HRoyden_1988}, there exists a $g \in \Lone{\I}$
such that
	\[
		\stl(x_1) = \int_0^{x_1} \Big( g(t)- \int_0^1 g(\tau) \dif \tau \Big) \dif t 
			\qquad \forall x_1 \in \I
	\]
because $\stl(0) = \stl(1) = 0$. The variational equation
\eqref{eq:t1_adj_var} is satisfied by $\stl$ with
	\[
        g(t) = \frac{1}{\kappa} \del{f_1[t;\gop, \yop, r] -
            \int_0^t f_0[\tau; \gop, \yop, r]\dif\tau} 
	\]
and $f_0, f_1$ defined in \eqref{eq:f0_f1_adj}. 

It remains to check that $\int_0^t f_0 d \tau$ 
and $f_1$ are in $\Lone{\I}$. This follows by applying Fubini's
theorem and H\"older's inequality. We consider first $f_0$:
\begin{align*}
	\int_0^1 \abs{f_0[x_1;\cdot]} & \dif x_1 
		 \leq  \norm{\gop - \gamma_d}_{\Lone{\I}} 
		 + \frac{1}{2}\int_\Omega \abs{\yop+v-y_d}^2 \dif x 
			+ \int_\Omega \abs{A_1\sbr{\gop}\grad\del{\yop+v}\cdot\grad r} \dif x \\
		&\leq  \norm{\gop - \gamma_d}_{\Lone{\I}} 
		 + \frac{1}{2}\normLt{\yop+v-y_d}\Omega^2
			+  C_A \del{1+\beta C_A}\normSZ{v}1p\Omega\normSZ{r}{1}{q}{\Omega} ,
\end{align*}
because $\normSZ{\yop+v}{1}{p}{\Omega} \le \del{1+\beta C_A} \normSZ{v}{1}{p}{\Omega}$.
Similarly, we obtain an $L^1$ estimate for $f_1$
\begin{align*}
	\int_0^1 \abs{f_1(x_1)}\dif x_1
		&\leq \int_\Omega \abs{A_2\sbr\gop\grad \del{\yop+v} \cdot \grad r}  \dif x 
		 \le  C_A \del{1+\beta C_A} \normSZ{v}{1}{p}{\Omega} \normSZ{r}{1}{q}{\Omega} .
\end{align*}
This implies the existence of a solution $\widetilde{s} \in \sobZ11{\I}$ to \eqref{eq:t1_adj_var}. The uniqueness of $\widetilde{s}$ and a priori bound in $(i)$ follow from
the estimate in \eqref{eq:b1_infsup_b}.
\end{proof}

\begin{thm}[existence of the adjoint system] \label{thm:t_adj_contraction} 
Under the assumptions of \linebreak
Lemma~\ref{lem:t_adj_maps_back}, the operator $T = T_2 \circ T_1 : \bbV \rightarrow \bbV$ is a
contraction with constant $1-\theta_2$.
\end{thm}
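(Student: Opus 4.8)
The plan is to exploit the affine structure of $T_1$ and $T_2$. In the source $\mathbf{f}$ of \eqref{eq:t1_adj_var} only the terms $-\int_0^1 A_i\sbr\gop\grad\del{\yop+v}\cdot\grad r\,\dif x_2$ depend on the argument $r$, and in \eqref{eq:t2_adj_var} the argument $\stl$ enters \emph{only} through the boundary datum $E\stl$ of the affine solution space; the right-hand side is independent of $\stl$. Consequently the difference of two outputs solves a \emph{source-free} version of each problem, and the contraction estimate will pick up only the argument-linear coefficients, not the full a priori constants of Lemma~\ref{lem:t_adj_maps_back}. Given $r_1,r_2\in\bbV$ I would set $\stl_i=T_1(r_i)$ and $\rtl_i=T_2(\stl_i)$; since $\rtl_i\in E\stl_i+\sobZ1q\Omega$, its trace on $\Gamma$ equals $\stl_i\in\sobZ11\Gamma$ and $\rtl_i\restriction_\Sigma=0$, so $\rtl_i\in\bbV$ and $T=T_2\circ T_1$ indeed maps $\bbV$ into itself. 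I equip $\bbV$ with the seminorm $\normSZ{\cdot}1q\Omega$, which is a norm because of the homogeneous condition on $\Sigma$.

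For the first factor I would subtract the two copies of \eqref{eq:t1_adj_var}. Tested against $\zeta$, the surviving ($r$-dependent) part of $\mathbf{f}[\cdot;\gop,\yop,r_1]-\mathbf{f}[\cdot;\gop,\yop,r_2]$ recombines, via \eqref{eq:op_DA}, into the single bilinear form $-\BO\sbr{\yop+v,r_1-r_2;\Dif A\sbr\gop\difdir\zeta}$. Bounding this with \eqref{eq:A_unif_bound} and $\normSZ{\yop+v}1p\Omega\le(1+\beta C_A)\normSZ v1p\Omega$, and then invoking the inf-sup \eqref{eq:b1_infsup_b}, gives exactly the $r$-part of Lemma~\ref{lem:t_adj_maps_back}(i):
\[
   \normSZ{\stl_1-\stl_2}11\I \le \alpha C_A(1+\beta C_A)\normSZ v1p\Omega\,\normSZ{r_1-r_2}1q\Omega .
\]

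For the second factor, the data in \eqref{eq:t2_adj_var} cancels, so $\rtl_1-\rtl_2$ solves $\BO\sbr{z,\rtl_1-\rtl_2;A\sbr\gop}=0$ for all $z\in\sobZ1p\Omega$ with boundary datum $\stl_1-\stl_2$. Writing $\rtl_1-\rtl_2=E(\stl_1-\stl_2)+w$ with $w\in\sobZ1q\Omega$, estimating $w$ through the adjoint inf-sup associated with \eqref{eq:b2_infsup} (valid with the same constant $\beta$ by the Banach--Ne\v{c}as theorem) together with the extension bound \eqref{eq:w1_wq_ext}, I obtain the \emph{sharp} coefficient $\normSZ{\rtl_1-\rtl_2}1q\Omega\le C_E(1+\beta C_A)\normSZ{\stl_1-\stl_2}11\I$. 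Composing the two displays and using the smallness hypothesis \eqref{eq:v_contraction} yields
\[
   \normSZ{T(r_1)-T(r_2)}1q\Omega \le \alpha C_E C_A(1+\beta C_A)^2\normSZ v1p\Omega\,\normSZ{r_1-r_2}1q\Omega \le (1-\theta_2)\normSZ{r_1-r_2}1q\Omega ,
\]
which, since $\theta_2\in\intoo{0,1}$, is the asserted contraction.

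The main obstacle is purely the bookkeeping of constants. I must use the sharp coefficient $C_E(1+\beta C_A)$ for the $T_2$-difference rather than the cruder a priori constant $2\beta C_E C_A$ of Lemma~\ref{lem:t_adj_maps_back}(ii), because only the former makes the product collapse to precisely $\alpha C_E C_A(1+\beta C_A)^2\normSZ v1p\Omega$ and thus match $1-\theta_2$ through \eqref{eq:v_contraction}; the crude constant would instead impose the spurious extra requirement $\beta C_A\le 1$. The one structural point needing care is the justification of the adjoint inf-sup for the $\sobZ1q\Omega$-solution, which is exactly where the reflexive Banach--Ne\v{c}as framework already invoked in Lemma~\ref{lem:t_adj_maps_back} supplies the needed well-posedness with constant $\beta$.
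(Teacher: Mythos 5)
Your proof is correct and follows essentially the same route as the paper's: the paper likewise exploits linearity in $r$ so that only the $r$-dependent terms of $\bf f$ survive in the $T_1$-difference (estimated via \eqref{eq:b1_infsup_b} with constant $\alpha C_A(1+\beta C_A)\normSZ{v}1p\Omega$), and it uses the same decomposition $\rtl_i = \tilde\ell_i + E\stl_i$ with the adjoint inf-sup of \eqref{eq:b2_infsup} to get the sharp factor $C_E(1+\beta C_A)$ for the $T_2$-difference, before closing with \eqref{eq:v_contraction}. Your observation that the crude a priori constant of Lemma~\ref{lem:t_adj_maps_back}(ii) would force the spurious condition $\beta C_A \le 1$ is a nice justification of a choice the paper makes silently, but it does not change the substance of the argument.
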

\begin{proof} The proof is similar to the one in \thmref{thm:t_contraction}, therefore we will
be brief. Consider $r_1,\, r_2 \in \bbV$ such that $r_1 \neq r_2$ and let $\stl_i = T_1(r_i), \rtl_i = T_2(\stl_i)$, where
$\stl_i,\rtl_i$ solve
\eqref{eq:t1_adj_var} and \eqref{eq:t2_adj_var} for $i=1, 2$. Then
\propref{prop:b_infsup} (\ref{item:b_infsup_i}) and Lemma~\ref{lem:t_adj_maps_back}
(\ref{item:t_adj_maps_back_i}) imply
	\begin{align*} 
		\normSZ{\stl_1 - \stl_2}{1}{1}{{\I}} 
			&\leq \alpha \sup_{\normSZ{\zeta}{1}{\infty}{{\I}}=1} \BG\sbr{\zeta,\stl_1 - \stl_2} 
				\nonumber \\
			&= \alpha \sup_{\normSZ{\zeta}{1}{\infty}{{\I}}=1} 
				\pair{\zeta,{\bf f}\sbr{\gop,\yop,r_1} - {\bf f}\sbr{\gop, \yop, r_2}} 
					\nonumber \\
			&\leq  \alpha C_A \del{1+\beta C_A}\normSZ{v}1p\Omega\normSZ{r_1 - r_2}{1}{q}{\Omega} . 
	\end{align*}
         In addition, writing $\rtl_i=\tilde\ell_i+E\stl_i$ with 
           $\tilde\ell_i\in\sobZ 1 q{\Omega}$ for $i=1,2$, we see that
	\begin{align*}  
		\normSZ{\rtl_1 - \rtl_2}{1}{q}{\Omega} 
			&= \normSZ{\tilde \ell_1 + E\stl_1 - \tilde \ell_2 - E\stl_2}{1}{q}{\Omega} 
			     \leq C_E\normSZ{\stl_1 - \stl_2}{1}{1}{{\I}} + \normSZ{\tilde \ell_1 - \tilde \ell_2}{1}{q}{\Omega}.
	\end{align*}
	Since \propref{prop:b_infsup} (\ref{item:b_infsup_ii}) implies
	\begin{align*}		
	        \normSZ{\tilde \ell_1 - \tilde \ell_2}{1}{q}{\Omega} 
			&\le \beta\sup_{\normSZ{z}{1}{p}{\Omega}=1} 
					\BO\sbr{z,\tilde \ell_1 - \tilde \ell_2;A\sbr{\gop}}  \\
			&=   \beta \sup_{\normSZ{z}{1}{p}{\Omega}=1}
					\BO\sbr{z,E\stl_2 - E\stl_1;A\sbr\gop} ,
        \end{align*}		
        we deduce that 			
        \begin{align*}  
             \normSZ{\rtl_1 - \rtl_2}{1}{q}{\Omega} 
			&\leq C_E\del{1 + \beta C_A}\normSZ{\stl_1 - \stl_2}11{\I} \nonumber \\
			&\leq  \alpha C_EC_A\del{1+\beta C_A}^2\normSZ{v}1p\Omega\normSZ{r_1 - r_2}{1}{q}{\Omega}.
		\end{align*}
	Invoking \eqref{eq:v_contraction} we obtain
		\begin{align*}
			\normSZ{\rtl_1 - \rtl_2}{1}{q}{\Omega} 
			&\leq \del{1-\theta_2}\normSZ{r_1 - r_2}{1}{q}{\Omega}.
		\end{align*}
	Therefore $T = T_2 \circ T_1 : \bbV \rightarrow \bbV$ is a contraction in $\bbV$. 
\end{proof}

\subsection{Second-order Sufficient Condition}
\label{s:oc_rc_second}
The final step is to prove the second-order sufficient condition for the optimal control $\uop$ found earlier,
which in turn guarantees that  $\uop$ is locally unique. This imposes an additional condition on 
$\normSZ{v}{1}{p}{\Omega}$ depending on the parameter $\theta_3$ given by 
\begin{equation}
\begin{aligned} \label{eq:theta_second_order_suff}
 \theta_3 = 
 \frac{\theta_2^2}{\alpha^2\Lambda_1}  
   \Bigg(
	     \frac{C_A\Lambda_2}{\theta_2}
	    \bigg( \alpha C_E\Lambda_1
            \Big( \omega_1+ \frac{1}{2} \omega_2^2
			     \Big)       
	      + 2\beta \omega_2
		 \bigg) 
	     +2 \Lambda_1^2  \omega_2
	\Bigg)^{-1},
\end{aligned}
\end{equation}
where 
$\Lambda_1 := 1 + \beta C_A, \Lambda_2 := 1 + 2\beta C_A, \omega_1 := 1
+ \normLt{\gamma_d}{\I}$ and $\omega_2 :=  \frac{1-\theta_1}{\alpha C_E C_A} 
+ \normLt{y_d}{\Omega}$.

\begin{thm}[second-order sufficient conditions] \label{thm:second_order_suff}
If $\theta_1$, $\theta_2$ satisfy \eqref{eq:v_invariant}, \eqref{eq:v_contraction}, and in addition
\begin{align}\label{eq:v_second_order_suff}
     \normSZ{v}{1}{p}{\Omega} \le \frac{\theta_3 \lambda}{2} ,
\end{align}
then
\begin{align}   \label{eq:second_order_suff}
	\calJ''(\uop) h^2 &\ge \frac{\lambda}{2} \normLt{h}{\I}^2 \qquad \forall h \in \cone\uop .
\end{align}
\end{thm}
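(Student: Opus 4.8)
The plan is to differentiate the reduced cost twice via the chain rule and then exploit that the only genuinely dangerous contributions carry a factor of $\normSZ{v}1p\Omega$, which is small by hypothesis. Writing $\del{\gamma_u, y_u} := G_v'(\uop)h$ and $\del{\gamma_{uu}, y_{uu}} := G_v''(\uop)h^2$, which exist by Theorems~\ref{thm:g_diff_u} and \ref{thm:g_twice_diff_u}, and recalling that $\calJ_2$ is quadratic, I would first establish
\begin{equation*}
	\calJ''(\uop)h^2 = \lambda\normLt{h}{\I}^2 + \calJ_1''\del{G_v(\uop)}\sbr{G_v'(\uop)h}^2 + \calJ_1'\del{G_v(\uop)}\sbr{G_v''(\uop)h^2}.
\end{equation*}
A direct computation from the explicit form of $\calJ_1$ (cf.\ \eqref{eq:red_cost_first_deriv}) gives
\begin{equation*}
	\calJ_1''\del{G_v(\uop)}\sbr{G_v'(\uop)h}^2 = \normLt{\gamma_u}{\I}^2 + \int_\Omega \del{1+\gop}y_u^2 + 2\int_\Omega \del{\yop+v-y_d}\gamma_u y_u.
\end{equation*}

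The crucial structural observation—and the reason the argument closes at all—is that the first two terms are \emph{nonnegative}: indeed $1+\gop \ge 1/2 > 0$ by the state constraint \eqref{eq:state_const}. This matters because \eqref{eq:g_diff_prelim_continuity_gamma} (with $F_\Omega=0$, $F_\Gamma(\zeta)=\int_0^1 h\zeta$, and \eqref{eq:u_negative_to_l2}) bounds $\normSZ{\gamma_u}1\infty{\I}$ by $\tfrac{\alpha}{\theta_2}\normLt{h}{\I}$ \emph{without} any factor of $\normSZ{v}1p\Omega$, so $\normLt{\gamma_u}{\I}^2$ is merely $O\del{\normLt{h}{\I}^2}$ and could never be absorbed as an error term; it must instead be discarded using its sign. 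I would therefore drop both square terms from the lower bound and retain only the cross term.

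It then remains to bound the cross term together with $\calJ_1'\del{G_v(\uop)}\sbr{G_v''(\uop)h^2}$ in absolute value by $\tfrac{\lambda}{2}\normLt{h}{\I}^2$. I would estimate each piece using \eqref{eq:g_diff_prelim_continuity_y}, \eqref{eq:g_nd_diff_estimate_gamma} and \eqref{eq:g_nd_diff_estimate_y}, combined with the uniform bounds $\normLt{\gop-\gamma_d}{\I}\le\omega_1$ and $\normLt{\yop+v-y_d}{\Omega}\le\omega_2$. The latter follow from $\normSZ{\gop}1\infty{\I}\le 1$, from $\del{\gop,\yop}\in\bbB_v$ (whence $\normSZ{\yop+v}1p\Omega\le\Lambda_1\normSZ{v}1p\Omega$), and from the upper bound \eqref{eq:v_invariant}; this is exactly how $\omega_1,\omega_2$ in \eqref{eq:theta_second_order_suff} are defined. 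Every one of these terms carries one power of $\normSZ{v}1p\Omega$—the $\gamma_{uu}$ and $y_{uu}$ estimates already contain it, and the cross term inherits it through $\normSZ{y_u}1p\Omega$—so the total is bounded by $\theta_3^{-1}\normSZ{v}1p\Omega\normLt{h}{\I}^2$, with $\theta_3$ precisely the quantity \eqref{eq:theta_second_order_suff} assembled from $\alpha,\beta,C_A,C_E,\Lambda_1,\Lambda_2,\omega_1,\omega_2,\theta_2$. Invoking \eqref{eq:v_second_order_suff}, i.e.\ $\normSZ{v}1p\Omega\le\theta_3\lambda/2$, these dangerous terms are at most $\tfrac{\lambda}{2}\normLt{h}{\I}^2$, and adding back the nonnegative squares yields \eqref{eq:second_order_suff}.

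The main obstacle is the bookkeeping required to verify that the accumulated constant is exactly $\theta_3^{-1}$ rather than merely proportional to it: one must match $\int_0^1\del{\gop-\gamma_d}\gamma_{uu}$, $\tfrac12\int_\Omega\abs{\yop+v-y_d}^2\gamma_{uu}$, and the two $y$-type contributions against the corresponding summands of \eqref{eq:theta_second_order_suff}, tracking the powers of $\Lambda_1,\Lambda_2$ and $\theta_2$. Conceptually, however, the only subtle step is the sign argument for the square terms above; once that is in place, the remainder is a routine though lengthy application of the derivative bounds already established.
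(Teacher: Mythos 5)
Your proposal is correct and follows essentially the same route as the paper: the same chain-rule decomposition of $\calJ''(\uop)h^2$, the same explicit computation of $\calJ_1''$ and $\calJ_1'$ terms, the same use of the sign of the squared terms (via $1+\gop>0$ from the state constraint), and the same absorption of the $v$-weighted error terms via the bounds \eqref{eq:g_diff_prelim_continuity_y}, \eqref{eq:g_nd_diff_estimate_gamma}, \eqref{eq:g_nd_diff_estimate_y} together with $\omega_1,\omega_2$ and the definition of $\theta_3$. Your explicit remark that $\normLt{\gamma_u}{\I}^2$ carries no factor of $\normSZ{v}1p\Omega$ and therefore must be discarded by sign rather than absorbed is a nice articulation of why the paper's estimate is structured as it is, but it is not a different argument.
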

\begin{proof}
Since $G_v$ is twice Fr\'echet differentiable, according to Theorem~\ref{thm:g_twice_diff_u},
we write the second-order Fr\'echet derivative of $\calJ$ from \eqref{eq:lin_cost_reduced} at $\uop$ in the direction $h^2$ as
\begin{align*}  
\calJ''(\uop)h^2 = \calJ_1''(G_v(\uop)) (G_v'(\uop)h)^2 
	+ \calJ_1'(G_v(\uop))G_v''(\uop) h^2 + \calJ_2''(\uop) h^2.
\end{align*}
Recalling that $G_v'(\uop)h = (\gamma_u(\uop)h,y_u(\uop)h)$
and $G_v''(\uop)h^2 = (\gamma_{uu}(\uop)h^2,y_{uu}(\uop)h^2)$, we can
rewrite the three terms on the right-hand side as follows:
\begin{align*}
      \calJ_1''(G_v (\uop))  (G_v'(\uop)h)^2  
     & = \normLt{\gamma_u(\uop)h}{\I}^2  \\
     &\quad + \int_\Omega \del{y_u(\uop)h}^2 \del{1+\gop}  \dif x \\
     &\quad + 2 \int_0^1 \del{\int_0^1 \del{\yop+v-y_d}
       y_u(\uop)h \dif x_2 } \gamma_u(\uop) h \dif x_1 ;
     \\
     \calJ_1'(G_v  (\uop))  (G_v''(\uop)h^2) 
      &=  \int_0^1(\gop - \gamma_d) \gamma_{uu}(\uop)h^2 \dif  x_1 \\
      &\quad + \int_0^1 \del{\frac{1}{2} \int_0^1 \abs{\yop+v-y_d}^2 \dif x_2 } 
           \gamma_{uu}(\uop) h^2 \dif x_1   \\
      &\quad +  \int_{\Omega} \del{\yop + v - y_d} \del{1+\gop} 
           y_{uu}(\uop)h^2 \dif x;  \\
     \calJ_2'' (\uop) h^2  &= \lambda \normLt{h}{\I}^2 .
\end{align*}                         
This yields
\begin{align*}		
         \calJ''(\uop)h^2  	
		&\ge \lambda \normLt{h}{\I}^2 + \normLt{\gamma_u(\uop)h}{\I}^2 
			+  \normLt{y_u(\uop)h \sqrt{1+\gop}}\Omega^2  \\
		&\quad- \del{ \normLt{\gop - \gamma_d}{\I} 
		               +\frac{1}{2} \normLt{\yop+v-y_d}\Omega^2 } \normSZ{\gamma_{uu}(\uop)h^2}1\infty{\I}  \\
	    & \quad -2 \normLt{\yop+v-y_d}\Omega  \normSZ{y_{uu}(\uop)h^2}1p\Omega  \\
	    & \quad -2 \normLt{\yop+v-y_d}\Omega \normSZ{y_u(\uop)h}1p\Omega \normSZ{\gamma_u(\uop)h}1\infty{\I}, 
\end{align*}
because of Poincar\'e inequalities 
$\normLt{z}{\Omega} \le \normSZ{z}{1}{2}\Omega \le \normSZ{z}{1}{p}\Omega$
for $z\in \sobZ 1 p{\Omega}$ and 
$\normLt{\zeta}{{\I}} \le \normLi{\zeta}{\I} \le \normSZ{\zeta}{1}{\infty}{\I}$
for $\zeta\in\sobZ{1}{\infty}{\I}$ on the unit domains $\Omega$ and $\I$.

To estimate the norms above, we first observe that $\del{\gop, \yop}\in \bbB_v$
implies
\begin{equation*}
    	\normLt{\gop-\gamma_d}{\I}  
		\le \normLt{\gop}{\I} + \normLt{\gamma_d}{\I} \le 1 
                + \normLt{\gamma_d}\Omega = \omega_1,
\end{equation*}
and, since $\normLt{\yop+v}{\Omega} \le
\normSZ{\yop+v}{1}{p}{\Omega}\le \del{1+\beta C_A} \normSZ{v}1p\Omega$,
\begin{equation*}
    	\normLt{\yop+v-y_d}\Omega 
                \le \normLt{\yop+v}{\Omega} + \normLt{y_d}\Omega
                \le \frac{1-\theta_1}{\alpha C_E C_A} + \normLt{y_d}\Omega
                = \omega_2.
\end{equation*}
We next invoke the estimates
\eqref{eq:g_nd_diff_estimate_gamma} and
\eqref{eq:g_nd_diff_estimate_y} of \thmref{thm:g_twice_diff_u} and
\eqref{eq:g_diff_u_estimate} of \thmref{thm:g_diff_u}, as well as
the definition \eqref{eq:theta_second_order_suff} of $\theta_3$,
to arrive at
\[
    		\calJ''(\uop)h^2  \ge \del{\lambda -
                  \frac{\normSZ{v}1p\Omega}{\theta_3} }
                \normLt{h}\Omega^2.
\]
Therefore, the smallness condition \eqref{eq:v_second_order_suff} on
$v$ yields \eqref{eq:second_order_suff}. 
\end{proof}
\begin{corollary}[quadratic growth]  \label{cor:quad_growth_cond}
Under the assumptions of Theorem~\ref{thm:second_order_suff}
there exist $\theta > 0$ such that 
for all $h \in \cone \uop$ with $\normLt{h}{\I} \le \theta$ we have
\begin{align} \label{eq:J_quad_growth}
	\calJ(\uop + h) &\ge \calJ(\uop) + \frac{\lambda}{8} \normLt{h}{\I}^2 .
\end{align}
\end{corollary}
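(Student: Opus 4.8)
The plan is to combine the coercivity of $\calJ''(\uop)$ established in Theorem~\ref{thm:second_order_suff} with a second-order Taylor expansion of the reduced functional $\calJ$ about $\uop$, using the Lipschitz continuity of $\calJ''$ to absorb the Taylor remainder once $h$ is small.

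First I would record that $\calJ$ is twice continuously Fr\'echet differentiable on $\calU$ with \emph{Lipschitz continuous} second derivative. Since $\calJ = \calJ_1\circ G_v + \calJ_2$ with both $\calJ_1$ and $\calJ_2$ quadratic, the forms $\calJ_1''$, $\calJ_2''$ are constant and $\calJ_1'$ is affine; hence $\calJ''(u)$ is assembled from $\calJ_1''\sbr{G_v'(u)\cdot, G_v'(u)\cdot}$ and $\calJ_1'(G_v(u))\sbr{G_v''(u)\cdot}$. A Lipschitz bound $\norm{\calJ''(u_1) - \calJ''(u_2)}_{\mathcal L(L^2\times L^2,\bbR)} \le L_{\calJ''}\normLt{u_1 - u_2}{\I}$ then follows by combining the Lipschitz continuity of $G_v$ (Theorem~\ref{thm:g_lipschitz}), of $G_v'$ (Proposition~\ref{prop:g_twice_diff_u}) and of $G_v''$ (Proposition~\ref{prop:guu_Lipschitz}) with the uniform bounds on $G_v'$ and $G_v''$ furnished by \eqref{eq:g_diff_u_estimate} and \eqref{eq:g_nd_diff_estimate_gamma}--\eqref{eq:g_nd_diff_estimate_y}.

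Next I would exploit optimality. Because $\uop$ solves the OC-FBP, the variational inequality \eqref{eq:var_ineq} holds, and the definition of the admissible cone (Definition~\ref{def:u_convex_cone}) yields $\calJ'(\uop)h \ge 0$ for every $h\in\cone{\uop}$. Since $\calU$ is an open convex ball containing $\uop\in\Uad$, for $\normLt{h}{\I}$ small enough the whole segment $\uop + th$, $t\in\sbr{0,1}$, remains in $\calU$, so the second-order Taylor formula with integral remainder applies:
\[
	\calJ(\uop + h) = \calJ(\uop) + \calJ'(\uop)h + \frac12\calJ''(\uop)h^2 + \int_0^1 (1-t)\del{\calJ''(\uop + th) - \calJ''(\uop)}h^2 \,\dif t.
\]
Discarding the nonnegative first-order term, bounding $\frac12\calJ''(\uop)h^2 \ge \frac{\lambda}{4}\normLt{h}{\I}^2$ via \eqref{eq:second_order_suff}, and estimating the remainder through $\abs{\del{\calJ''(\uop+th)-\calJ''(\uop)}h^2} \le L_{\calJ''}\,t\,\normLt{h}{\I}^3$ together with $\int_0^1 t(1-t)\,\dif t = \frac16$, I obtain
\[
	\calJ(\uop + h) \ge \calJ(\uop) + \frac{\lambda}{4}\normLt{h}{\I}^2 - \frac{L_{\calJ''}}{6}\normLt{h}{\I}^3.
\]

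Finally I would fix $\theta$. Choosing $\theta \le \min\cbr{\frac{3\lambda}{4L_{\calJ''}},\ \rho}$, where $\rho>0$ guarantees $\uop + h\in\calU$, forces $\frac{L_{\calJ''}}{6}\normLt{h}{\I}^3 \le \frac{\lambda}{8}\normLt{h}{\I}^2$ whenever $\normLt{h}{\I}\le\theta$, which gives the quadratic growth \eqref{eq:J_quad_growth}. I expect the only genuinely delicate step to be the first: assembling the Lipschitz continuity of $\calJ''$ from the component estimates, which is exactly where Proposition~\ref{prop:guu_Lipschitz} and the uniform boundedness of the derivatives of $G_v$ on $\calU$ enter; the Taylor-remainder computation and the selection of $\theta$ are the routine bookkeeping familiar from second-order sufficient condition theory.
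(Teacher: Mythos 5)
Your proposal is correct and follows essentially the same route as the paper, whose proof simply defers to \cite[Theorem 4.23]{FTroltzsch_2010a}: that theorem's argument is precisely the second-order Taylor expansion with integral remainder that you write out, combining $\calJ'(\uop)h \ge 0$ for $h \in \cone{\uop}$, the coercivity \eqref{eq:second_order_suff}, and the Lipschitz continuity of $\calJ''$, for which the paper points to Proposition~\ref{prop:guu_Lipschitz}. The one inaccuracy is your claim that $\calJ_1$ is quadratic: the weight $J_\gamma = 1+\gamma$ in \eqref{eq:lin_cost} makes it cubic in the state, so $\calJ_1''$ is affine rather than constant in $(\gamma,y)$; this is harmless, however, because the states remain in the bounded set $\bbB_v$, and the ingredients you list (Lipschitz continuity together with uniform bounds for $G_v$, $G_v'$, $G_v''$) still yield the required Lipschitz bound on $\calJ''$.
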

\begin{proof}
We refer to \cite[Theorem 4.23]{FTroltzsch_2010a}, which uses the 
continuity of the second-order Fr\'echet derivative (see \propref{prop:guu_Lipschitz}). 
\end{proof}

Corollary \ref{cor:quad_growth_cond} implies that there exists a unique local minimum $\uop$ solution to our OC-FBP. 
Moreover, \eqref{eq:J_quad_growth} is equivalent to 
\begin{align}  \label{eq:gradJ_quad_growth}
	\pairLt{\calJ'(u) - \calJ'(\uop), u-\uop}{\I} \ge \frac{\lambda}{4} \normLt{u - \uop}{\I}^2
	\quad \forall u \in \uop + \cone\uop . 
\end{align}



\section*{Acknowledgments}
This research was supported in part by NSF grants DMS-0807811 and DMS-1109325. We 
thank Michael Hinterm\"uller for pointing out reference \cite{JZowe_SKurcyusz_1979a} and
many fruitful comments on the manuscript.  
We also thank Abner Salgado for useful discussions throughout this work. 
We are especially grateful to the referees for their insightful
comments and suggestions.

{\small
  \bibliographystyle{siam}
  \bibliography{references}

\def\ocirc#1{\ifmmode\setbox0=\hbox{$#1$}\dimen0=\ht0 \advance\dimen0
  by1pt\rlap{\hbox to\wd0{\hss\raise\dimen0
  \hbox{\hskip.2em$\scriptscriptstyle\circ$}\hss}}#1\else {\accent"17 #1}\fi}
  \def\cprime{$'$} \def\cprime{$'$}
\begin{thebibliography}{10}

\bibitem{MElliott_MHinze_VStyles_2007a}
{\em Mini-workshop: {C}ontrol of {F}ree {B}oundaries}, Oberwolfach Rep., 4
  (2007), pp.~447--486.
\newblock Abstracts from the mini-workshop held February 11--17, 2007,
  Organized by C. M. Elliott, M. Hinze and V. Styles, Oberwolfach Reports. Vol.
  4, no. 1.

\bibitem{MElliott_YGiga_MHinze_VStyles_2010a}
{\em New directions in simulation, control and analysis for interfaces and free
  boundaries}, Oberwolfach Rep., 7 (2010), pp.~253--324.
\newblock Abstracts from the workshop held January 31--February 6, 2010,
  Organized by C. M. Elliott, Y. Giga, M. Hinze and V. Styles, Oberwolfach
  Reports. Vol. 7, no. 1.

\bibitem{RAAdams_JJFFournier_2003}
{\sc R.~A. Adams and J.~J.~F. Fournier}, {\em Sobolev spaces}, vol.~140 of Pure
  and Applied Mathematics (Amsterdam), Elsevier/Academic Press, Amsterdam,
  second~ed., 2003.

\bibitem{HAntil_RHoppe_CLinsenmann_2007a}
{\sc H.~Antil, R.H.W. Hoppe, and C.~Linsenmann}, {\em Path-following
  primal-dual interior-point methods for shape optimization of stationary flow
  problems}, J. Numer. Math., 15 (2007), pp.~81--100.

\bibitem{HAntil_RHNochetto_PSodre_2013a}
{\sc H.~Antil, R.~H. Nochetto, and P.~Sodr\'e}, {\em Optimal control of a free
  boundary problem with surface tension effects: A priori error analysis},
  submitted,  (2013).

\bibitem{MKBernauer_RHerzog}
{\sc M.~K. Bernauer and R.~Herzog}, {\em Optimal control of the classical
  two-phase {S}tefan problem in level set formulation}, SIAM J. Sci. Comput.,
  33 (2011), pp.~342--363.

\bibitem{JCDReyes_MHintermuller_2011a}
{\sc J.~C. De~Los~Reyes and M.~Hinterm{\"u}ller}, {\em A duality based
  semismooth {N}ewton framework for solving variational inequalities of the
  second kind}, Interfaces Free Bound., 13 (2011), pp.~437--462.

\bibitem{KDeckelnick_GDziuk_CElliott_2005a}
{\sc K.~Deckelnick, G.~Dziuk, and C.~M. Elliott}, {\em Computation of geometric
  partial differential equations and mean curvature flow}, Acta Numerica, 14
  (2005), pp.~139--232.

\bibitem{MDelfour_JZolesio2011}
{\sc M.~C. Delfour and J.-P. Zol{\'e}sio}, {\em Shapes and geometries}, vol.~22
  of Advances in Design and Control, Society for Industrial and Applied
  Mathematics (SIAM), Philadelphia, PA, second~ed., 2011.
\newblock Metrics, analysis, differential calculus, and optimization.

\bibitem{AErn_JLGuermond_2004a}
{\sc A.~Ern and J.~L. Guermond}, {\em Theory and practice of finite elements},
  vol.~159 of Applied Mathematical Sciences, Springer-Verlag, New York, 2004.

\bibitem{RGlowinski_1984a}
{\sc R.~Glowinski}, {\em Numerical methods for nonlinear variational problems},
  Scientific Computation, Springer-Verlag, Berlin, 2008.
\newblock Reprint of the 1984 original.

\bibitem{MHinze_SZiegenbalg_2007b}
{\sc M.~Hinze and S.~Ziegenbalg}, {\em Optimal control of the free boundary in
  a two-phase {S}tefan problem}, J. Comput. Phys., 223 (2007), pp.~657--684.

\bibitem{MHinze_SZiegenbalg_2007a}
\leavevmode\vrule height 2pt depth -1.6pt width 23pt, {\em Optimal control of
  the free boundary in a two-phase {S}tefan problem with flow driven by
  convection}, ZAMM Z. Angew. Math. Mech., 87 (2007), pp.~430--448.

\bibitem{CTKelley_1999a}
{\sc C.~T. Kelley}, {\em Iterative methods for optimization}, vol.~18 of
  Frontiers in Applied Mathematics, Society for Industrial and Applied
  Mathematics (SIAM), Philadelphia, PA, 1999.

\bibitem{TKrupenkin_JATaylor_2011a}
{\sc T.~Krupenkin and J.A. Taylor}, {\em Reverse electrowetting as a new
  approach to high-power energy harvesting}, Nat. Commun., 2 (2011), p.~448.

\bibitem{JLLions_EMagenes_1961c}
{\sc J.-L. Lions and E.~Magenes}, {\em Probl\`emes aux limites non homog\`enes.
  {IV}}, Ann. Scuola Norm. Sup. Pisa (3), 15 (1961), pp.~311--326.

\bibitem{NGMeyers_1963}
{\sc N.~G. Meyers}, {\em An {$L^{p}$}-estimate for the gradient of solutions of
  second order elliptic divergence equations}, Ann. Scuola Norm. Sup. Pisa (3),
  17 (1963), pp.~189--206.

\bibitem{SRepke_NMarheineke_RPinnau2010a}
{\sc S.~Repke, N.~Marheineke, and R.~Pinnau}, {\em Two adjoint-based
  optimization approaches for a free surface {S}tokes flow}, SIAM J. Appl.
  Math., 71 (2011), pp.~2168--2184.

\bibitem{HRoyden_1988}
{\sc H.~L. Royden}, {\em Real analysis}, Macmillan Publishing Company, New
  York, third~ed., 1988.

\bibitem{PSaavedra_RScott_1991}
{\sc P.~Saavedra and L.~R. Scott}, {\em Variational formulation of a model
  free-boundary problem}, Math. Comp., 57 (1991), pp.~451--475.

\bibitem{JSokolowski_JPZolesio_1992}
{\sc J.~Soko{\l}owski and J.~P. Zol{\'e}sio}, {\em Introduction to shape
  optimization}, vol.~16 of Springer Series in Computational Mathematics,
  Springer-Verlag, Berlin, 1992.
\newblock Shape sensitivity analysis.

\bibitem{LTartar_2007}
{\sc L.~Tartar}, {\em An introduction to {S}obolev spaces and interpolation
  spaces}, vol.~3 of Lecture Notes of the Unione Matematica Italiana, Springer,
  Berlin, 2007.

\bibitem{FTroltzsch_2010a}
{\sc F.~Tr{\"o}ltzsch}, {\em Optimal control of partial differential
  equations}, vol.~112 of Graduate Studies in Mathematics, American
  Mathematical Society, Providence, RI, 2010.
\newblock Theory, methods and applications, Translated from the 2005 German
  original by J{\"u}rgen Sprekels.

\bibitem{KVZee_EBrummelen_IAkkerman_RBorst_2010a}
{\sc KG~van~der Zee, EH~van Brummelen, I.~Akkerman, and R.~de~Borst}, {\em
  Goal-oriented error estimation and adaptivity for fluid-structure interaction
  using exact linearized adjoints}, Computer Meths. in Appl. Mech. and Eng.,
  (2010).

\bibitem{SWalker_BShapiro_RNochetto_2009a}
{\sc S.W. Walker, B.~Shapiro, and R.H. Nochetto}, {\em Electrowetting with
  contact line pinning: Computational modeling and comparisons with
  experiments}, Physics of Fluids, 21 (2009), p.~102103.

\bibitem{SWalker_ABonito_RNochetto_2010a}
{\sc S.~W. Walker, A.~Bonito, and R.~H. Nochetto}, {\em Mixed finite element
  method for electrowetting on dielectric with contact line pinning},
  Interfaces Free Bound., 12 (2010), pp.~85--119.

\bibitem{JZowe_SKurcyusz_1979a}
{\sc J.~Zowe and S.~Kurcyusz}, {\em Regularity and stability for the
  mathematical programming problem in {B}anach spaces}, Appl. Math. Optim., 5
  (1979), pp.~49--62.

\end{thebibliography}
}

\end{document}